\theoremstyle{plain}
\newtheorem{thm}{Theorem}[section]
\crefname{thm}{Theorem}{Theorems}
\theoremstyle{plain}
\newtheorem{lem}[thm]{Lemma}
\crefname{lem}{Lemma}{Lemmas}
\theoremstyle{plain}
\theoremstyle{plain}
\newtheorem*{claim*}{Claim}
\crefname{claim}{Claim}{Claims}
\theoremstyle{definition}
\newtheorem{defn}[thm]{Definition}
\theoremstyle{plain}
\crefname{conjecture}{Conjecture}{Conjectures}
\theoremstyle{plain}
\crefname{prop}{Proposition}{Propositions}
\theoremstyle{definition}
\theoremstyle{definition}
\theoremstyle{plain}
\newtheorem{claim}[thm]{Claim}
\newtheorem{fact}[thm]{Fact}
\numberwithin{equation}{section}
\crefname{appsec}{Appendix}{Appendices}
\date{}
\let\originalleft\left
\let\originalright\right
\renewcommand{\left}{\mathopen{}\mathclose\bgroup\originalleft}
\renewcommand{\right}{\aftergroup\egroup\originalright}
\renewcommand*{\UrlTildeSpecial}{%
  \do\~{%
    \mbox{%
      \fontfamily{ptm}\selectfont
      \textasciitilde
    }%
  }%
}%
\let\Url@force@Tilde\UrlTildeSpecial
\let\OLDthebibliography\thebibliography
\renewcommand\thebibliography[1]{
  \OLDthebibliography{#1}
  \setlength{\parskip}{0pt}
  \setlength{\itemsep}{3pt plus 0.3ex}
}
\begin{document}
\title{Extension complexity of low-dimensional polytopes}
\author{
Matthew Kwan\thanks{IST Austria, Klosterneuburg, Austria.
Email: \href{matthew.kwan@ist.ac.at}{\nolinkurl{matthew.kwan@ist.ac.at}}.
Research supported by SNSF Project 178493 and NSF Award DMS-1953990.}
\and
Lisa Sauermann\thanks{Department of Mathematics, Massachusetts Institute of Technology, Cambridge, MA.
Email: \href{lsauerma@mit.edu}{\nolinkurl{lsauerma@mit.edu}}. Research supported by NSF Award DMS-1953772.}
\and
Yufei Zhao\thanks{Department of Mathematics, Massachusetts Institute of Technology, Cambridge, MA.
Email: \href{yufeiz@mit.edu}{\nolinkurl{yufeiz@mit.edu}}. 
Research supported by NSF Award DMS-1764176, NSF CAREER Award DMS-2044606, a Sloan Research Fellowship, and the MIT Solomon Buchsbaum Fund.}
}

\maketitle
\global\long\def\RR{\mathbb{R}}%
\global\long\def\QQ{\mathbb{Q}}%
\global\long\def\E{\mathbb{E}}%
\global\long\def\Var{\operatorname{Var}}%
\global\long\def\Vol{\operatorname{Vol}}%
\global\long\def\CC{\mathbb{C}}%
\global\long\def\NN{\mathbb{N}}%
\global\long\def\supp{\operatorname{supp}}%
\global\long\def\one{\boldsymbol{1}}%
\global\long\def\d{\operatorname{d}}%
\global\long\def\rank{\operatorname{rank}}%
\global\long\def\conv{\operatorname{conv}}%
\global\long\def\xc{\operatorname{xc}}%
\global\long\def\floor#1{\left\lfloor #1\right\rfloor }%
\global\long\def\ceil#1{\left\lceil #1\right\rceil }%
\global\long\def\cond{\,\middle|\,}%
\global\long\def\su{\subseteq}%
\global\long\def\eps{\varepsilon}%

\begin{abstract}
Sometimes, it is possible to represent a complicated polytope as a projection of a much simpler polytope. To quantify this phenomenon, the \emph{extension complexity} of a polytope $P$ is defined to be the minimum number of facets of a (possibly higher-dimensional) polytope from which $P$ can be obtained as a (linear) projection. 
This notion is motivated by its relevance to combinatorial optimisation, and has been studied intensively for various specific polytopes associated with important optimisation problems.
In this paper we study extension complexity as a parameter of general polytopes, more specifically considering various families of low-dimensional polytopes.

First, we prove that for a fixed dimension $d$, the extension complexity of a random $d$-dimensional polytope (obtained as the convex hull of random points in a ball or on a sphere) is typically on the order of the square root of its number of vertices. Second, we prove that any cyclic $n$-vertex polygon (whose vertices lie on a circle) has extension complexity at most $24\sqrt n$. This bound is tight up to the constant factor $24$. Finally, we show that there exists an $n^{o(1)}$-dimensional polytope with at most $n$ vertices and extension complexity $n^{1-o(1)}$. Our theorems are proved with a range of different techniques, which we hope will be of further interest.
\end{abstract}

\section{Introduction}
A regular hexagon $P$ is an example of a two-dimensional polytope.
It has six facets, which means that we need at least six linear constraints when describing $P$ by a list of inequalities. However, a curious observation is that we can actually view $P$ as a projection of a three-dimensional polytope having only \emph{five} facets (see \cref{fig:projection}). Actually, it follows from work of Ben-Tal and Nemirovski~\cite{BN01} (see also \cite{KP11}) that a regular $n$-gon can be described as a linear projection of a polytope in some higher dimension which has only $O(\log n)$ facets.

\begin{figure}[t]
\begin{center}\includegraphics[scale=0.35,trim={0 2.5cm 0 2cm},clip]{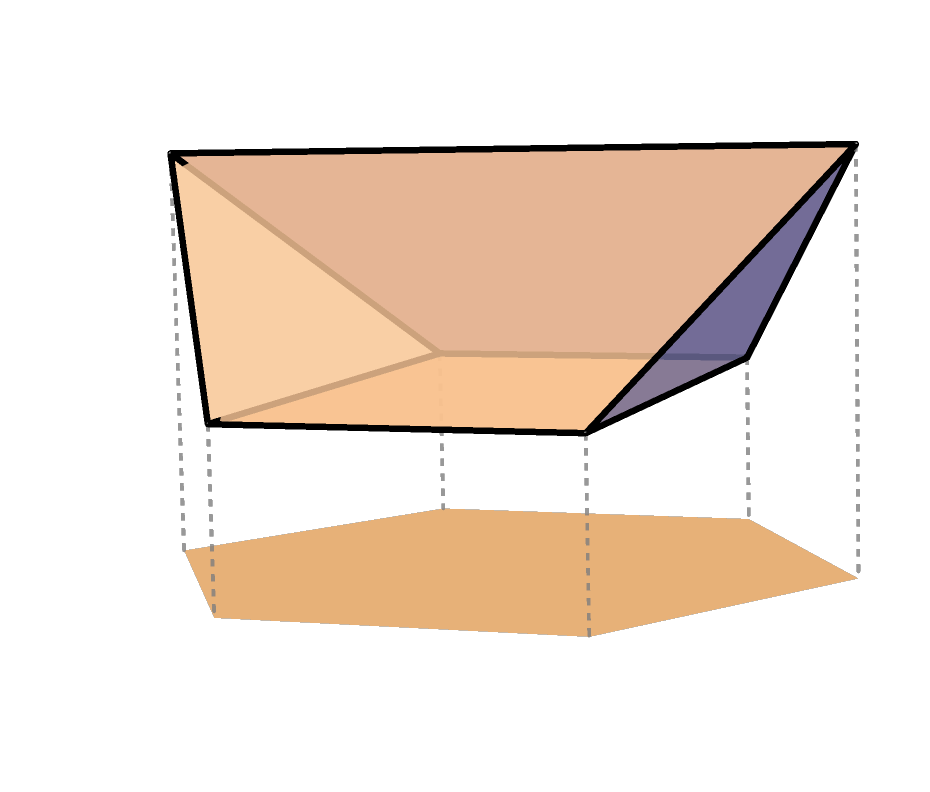}
\end{center}\caption{\label{fig:projection}A regular hexagon can be represented as the projection of a $3$-dimensional polytope with only five facets.}
\end{figure}

That is to say, sometimes it is possible to represent a polytope $P$
with a large number of facets as a projection of a higher-dimensional
polytope $P'$ with a much smaller number of facets. This observation
is enormously useful in combinatorial optimisation,
because it can allow one to solve a linear program with many constraints
via a linear program with a much smaller number of constraints (this
latter linear program is called an \emph{extended formulation}). To quantify this phenomenon, the \emph{extension complexity} $\xc(P)$ of a $d$-dimensional polytope $P$ is defined to be the minimum number of facets
of a polytope $P'\subseteq\RR^{d'}$ such that one can obtain $P$
as the image of $P'$ under a projection onto a $d$-dimensional subspace.

The study of extended formulations and extension complexity has a rich history (see for example the surveys \cite{CCZ13,Kai11,VW09}), and has enjoyed particular attention over the last decade. A large part of the research in this area has focused on understanding the extension complexity of specific polytopes associated with important optimisation problems, such as the max-cut problem~\cite{CLRS16}, the travelling salesman problem~\cite{FMPTdW15} and the perfect matching problem~\cite{Rot17}. In contrast, in this paper we are interested in more theoretical aspects of extension complexity as a parameter of general polytopes.

In his foundational paper \cite{Yan91}, Yannakakis discovered a fundamental connection between extension complexity and the notion of \emph{nonnegative rank}. For a nonnegative $m\times n$ matrix $M\in \mathbb{R}_{\geq 0}^{m\times n}$, we define the nonnegative rank of $M$, denoted $\rank_{+}M$, to be the minimum $r$ such that there is a factorisation $M=TU$, where $T\in \mathbb{R}_{\geq 0}^{m\times r}$ and $U\in \mathbb{R}_{\geq 0}^{r\times n}$ are nonnegative matrices with $r$ columns and $r$ rows, respectively. Yannakakis showed that the extension complexity of a polytope $P$ is equal to the nonnegative rank of a certain matrix (a \emph{slack matrix}) associated with $P$, and the study of extension complexity is therefore closely related to the study of nonnegative rank. It is worth remarking that the notion of nonnegative rank also plays an important role in machine learning and statistics, as well as in communication complexity (see for example the survey \cite{Gil15}).

It is a very difficult problem to compute the nonnegative rank of
a given nonnegative matrix\footnote{In fact, it is not immediately obvious that there is any algorithm
that runs in any finite amount of time! This was first proved by Cohen and
Rothblum~\cite{CR93}. The current state of the art is an algorithm
due to Moitra~\cite{Moi16} that runs in exponential time.
Some reductions to canonical computationally difficult problems were proved in \cite{AGKM16,Shi18,Vav09}.}, and it also seems to be very difficult to determine the extension complexity of a given polytope. However, it is easy to show that the extension complexity $\xc(P)$ of a polytope $P$ is at most the number of facets of $P$, and also at most the number of vertices of $P$. In
fact, due to the existence of an operation called the \emph{polar
dual}, which flips the roles of vertices and facets of a polytope and does not affect the extension complexity, vertices and facets are basically interchangeable from the point of view of extension complexity.

It is natural to ask to which extent the \emph{dimension} of a polytope controls its extension complexity. For example, if a polytope $P$ has $n$ vertices and some small dimension $d$, can we give a stronger upper bound than $n$ on its extension complexity? What if $P$ is in some sense a ``generic'' or ``random'' polytope of dimension $d$? Various questions of this type (and similar questions in the equivalent setting of nonnegative rank) have been asked over the years, in online media such as the \emph{Open Problem Garden}~\cite{OPG}, at conferences in mathematics and computer science (see for example~\cite{Dag13,Dag15,Lin13}), and in a large number of papers (see for example \cite{BL09,BP13,FRT12,Hru12,LC10,PP15,Shi19,Sh14v1,Sh14v2,VGGT16}).

In this paper we make several contributions towards answering these questions. First, for constant $d$ we consider two natural models of random $d$-dimensional polytopes, namely polytopes obtained as the convex hull of $n$ independent uniformly random points on the unit sphere or $m$ independent uniformly random points in the unit ball. For both of these models, we show that the extension complexity is likely to be about the square root of the number of vertices. The important part here is the upper bound: Padrol~\cite{Pad16} has already shown that for a wide range of different notions of random polytopes and any $d\ge 2$, a random $d$-dimensional polytope with at least $n$ vertices or facets typically has extension complexity at least $\Omega(\sqrt{n})$ (earlier, Fiorini, Rothvo{\ss} and Tiwary~\cite{FRT12} proved a very similar result, but stated it only for $d=2$). In contrast, the upper bounds in our results are new, and (at least for dimension $d\geq 3$) no nontrivial upper bounds were known in this setting before.

\begin{thm}
\label{thm:sphere}
Fix $d\geq 2$ and let $P$ be the convex hull
of $n$ random points on the $(d-1)$-dimensional unit
sphere $S\subseteq\RR^{d}$.Then, a.a.s.\footnote{By ``asymptotically almost surely'', or ``a.a.s.'', we mean that
the probability of an event is $1-o\left(1\right)$. Here and for
the rest of the paper, asymptotics are as $n\to\infty$ (for $d$ fixed). By the asymptotic notation $\xc(P)=\Theta(\sqrt{n})$ we mean that there exist positive constants $C$ and $c$ (which may depend on $d$) such that $c\sqrt{n}\leq \xc(P)\leq C\sqrt{n}$ for all (sufficiently large) $n$.}\ $\xc(P)=\Theta(\sqrt{n})$.
\end{thm}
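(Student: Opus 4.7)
The lower bound $\xc(P)=\Omega(\sqrt n)$ a.a.s.\ is provided by Padrol's theorem cited above, so the task is to prove the matching upper bound $\xc(P)=O(\sqrt n)$ a.a.s. For $d=2$ the random polytope is automatically a cyclic $n$-gon, and the bound follows directly from the second main result of the paper (the $O(\sqrt m)$ bound for cyclic $m$-gons stated in the abstract); the remainder of the plan concerns $d\geq 3$.

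The approach is a divide-and-conquer decomposition of the sphere. Partition $S^{d-1}$ into $M=\Theta(\sqrt n)$ regions $R_1,\ldots,R_M$ of equal surface measure and comparable diameter (for example by bucketing according to spherical coordinates). Writing $V_i$ for the set of random points in $R_i$, a Chernoff bound applied to $|V_i|$ together with a union bound over the $M$ regions shows that a.a.s.\ each $|V_i|=\Theta(\sqrt n)$. Setting $P_i=\conv V_i$ and applying Balas's disjunctive-formulation theorem,
\[
\xc(P)\;\leq\;\sum_{i=1}^M\xc(P_i)+O(M).
\]
With $M=\Theta(\sqrt n)$ the additive term is already $O(\sqrt n)$, so it remains to show $\sum_i\xc(P_i)=O(\sqrt n)$.

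The main obstacle is this bound on the pieces. Naively $\xc(P_i)\leq |V_i|=O(\sqrt n)$, which yields only $O(n)$ in total. Even an inductive argument --- stereographically projecting each cap to its tangent hyperplane and invoking (a close variant of) the theorem for the resulting $(d-1)$-dimensional polytope on roughly $\sqrt n$ vertices near a $(d-2)$-sphere --- would give only $\xc(P_i)=O(n^{1/4})$ per piece, hence $O(n^{3/4})$ overall. To close the remaining factor $n^{1/4}$ one must exploit additional structure beyond the local in-cap geometry. My plan would be to try a multi-scale partition, in which the sphere is split into nested pieces and the extended formulation is built hierarchically so that contributions at each scale sum geometrically to $O(\sqrt n)$ overall; alternatively, to construct a single extended polytope whose auxiliary variables describe a cap assignment and are shared across all pieces, so that the common combinatorial skeleton of the $P_i$'s is paid for only once. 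Designing such a sharing or hierarchical mechanism is where I expect the heart of the technical difficulty to lie; once it is in place, the concentration and Balas steps are routine.
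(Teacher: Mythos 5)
There is a genuine gap, which you have honestly flagged yourself: after partitioning the sphere into $M=\Theta(\sqrt n)$ caps and writing $P=\conv(P_1\cup\dots\cup P_M)$, Balas's bound $\xc(P)\le\sum_i\xc(P_i)+O(M)$ combined with the trivial $\xc(P_i)\le|V_i|=O(\sqrt n)$ only gives $O(n)$. Your two proposed escape routes (a multi-scale nested decomposition, or an extended polytope with shared auxiliary variables encoding a cap assignment) are not worked out, and it is not clear either one can be made to close the factor-$n^{1/4}$ gap; in particular, repeatedly applying Balas at nested scales still seems to pay the full $\xc(P_i)$ cost for the pieces at each scale, and the ``shared skeleton'' formulation is exactly the hard part that is left undesigned.

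The paper circumvents this by never decomposing $P$ geometrically at all. Instead, it works with the slack matrix $M$ via Yannakakis's theorem ($\xc(P)=\rank_+ M$), partitions the \emph{columns} of $M$ (facets) into $O(1)$ colour classes so that same-colour caps are far apart, and proves a key ``lampshade'' lemma (\cref{lem:shitov}) asserting that for a patch of facets $F'$ encapsulated in a small cap and a set of vertices $V'$ far from that cap, the submatrix $M[V',F']$ --- and even a version of it with certain rows subtracted --- has $\rank_+$ bounded by a constant depending only on $d$. The crucial extra ingredient that your Balas-based plan lacks is the treatment of the $O(\sqrt n)$ vertices $V_c$ that are \emph{close} to the caps of a given colour: the paper subtracts a collection of only $O(\sqrt n)$ shared row vectors (one per vertex label, not one per cap) from $M[V_c,F_c]$ so that the ``diagonal'' blocks $M[V^D,F^D]$ become exactly zero, and then the lampshade lemma applies blockwise to the residual matrix. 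This algebraic sharing of subtraction vectors across caps is what yields the overall $O(\sqrt n)$ bound, and has no obvious counterpart in a disjunctive-programming formulation. Your observation that the $d=2$ case follows directly from the cyclic polygon theorem is correct as a shortcut, and the Padrol lower bound citation is also fine; the unresolved content is exactly the $d\ge 3$ upper bound.
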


\begin{thm}
\label{thm:ball}
Fix $d\geq 2$, let $P$ be the convex hull of
$m$ random points in the $d$-dimensional unit ball $B\subseteq\RR^{d}$, and let $n=m^{(d-1)/(d+1)}$. Then a.a.s.\ $\xc(P)=\Theta(\sqrt{n})$.
\end{thm}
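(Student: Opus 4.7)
The plan is to prove the lower and upper bounds separately. For the lower bound, by the classical R\'enyi--Sulanke theorem and its higher-dimensional extensions, the convex hull $P$ of $m$ uniform random points in the $d$-dimensional unit ball has $\Theta(m^{(d-1)/(d+1)}) = \Theta(n)$ vertices a.a.s. Combined with Padrol's theorem mentioned in the introduction (which says that a $d$-dimensional random polytope with $\Omega(n)$ vertices has extension complexity $\Omega(\sqrt n)$ a.a.s.), this yields $\xc(P) = \Omega(\sqrt n)$ a.a.s.

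For the upper bound, my strategy is to reduce to \cref{thm:sphere}. The key geometric observation is that the vertices of $P$ are concentrated in a thin shell near the sphere: with high probability, every vertex $v$ of $P$ satisfies $1 - O(m^{-2/(d+1)}) \le |v| \le 1$. This is a classical fact about random polytopes in smooth convex bodies (the vertices are essentially sampled from a tubular neighborhood of $S^{d-1}$ of the expected floating body width). Hence, if we radially project each vertex of $P$ onto $S^{d-1}$ to obtain points $y_1,\ldots,y_V$ with $V = \Theta(n)$, the projected points form an ``approximately uniform'' random configuration on the sphere, and the original vertices of $P$ are small radial perturbations of these $y_i$'s.

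Given this, the proof of \cref{thm:ball} would follow from a \emph{robust} version of \cref{thm:sphere}: one asserting that the $O(\sqrt n)$ bound holds not only for the convex hull of $n$ uniform random points exactly on $S^{d-1}$, but also for the convex hull of any sufficiently small perturbation of such a configuration. The distribution of the radial projections of the vertices of $P$ is not exactly uniform on the sphere, but by rotational symmetry and standard concentration arguments one should be able to show that it is sufficiently ``spread out'' for the same kind of construction that gives \cref{thm:sphere} to go through.

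The main obstacle is thus to establish this robustness property. If the proof of \cref{thm:sphere} is based on a combinatorial decomposition of the sphere into $O(\sqrt n)$ caps (each containing roughly $O(\sqrt n)$ points, which can be grouped into a convex piece of bounded extension complexity), then such a decomposition lifts directly to the ball setting by thickening each cap into a thin radial region; the technical challenge is then to verify that the associated extension construction is insensitive to radial perturbations of width $O(m^{-2/(d+1)})$ within these regions. I expect the bulk of the work to go into this robustness argument, rather than into new probabilistic estimates about vertex counts or shell concentration, both of which are fairly standard.
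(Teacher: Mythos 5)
Your lower-bound reasoning is fine (the paper also records that Padrol's result covers the ball model; its own default argument is instead a transcendence-degree bound in the spirit of Fiorini--Rothvo\ss--Tiwary, but either suffices). The trouble is with the upper bound: framing it as ``radially project the vertices to $S^{d-1}$ and invoke a robust version of \cref{thm:sphere}'' introduces two obstacles that the paper never has to face. First, the radial projections of the vertices of $P$ are not i.i.d.\ uniform (nor close to it in any simple sense) --- they are the boundary vertices of a correlated random configuration --- so you cannot directly feed them into the sphere theorem or a lightly perturbed version of it. Second, extension complexity is not stable under small perturbations of the vertex set in general, and the convex hull of the projected points is a genuinely different polytope from $P$ (different facets, different slack matrix). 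So the promised ``robustness of the extension construction under radial perturbations'' would have to be proved from scratch, and at that point you would essentially be re-deriving the general argument rather than reducing to the sphere case.

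What the paper actually does is not a reduction at all: it proves \cref{thm:sphere} and \cref{thm:ball} by one unified argument in which the lampshade lemma (\cref{lem:shitov}) and the cap decomposition are stated for \emph{any} polytope $P\subseteq B$, with vertices anywhere in the ball. The ``robustness'' you anticipate is thus built into the geometric setup (solid caps cut out by half-spaces, not spherical caps), and no perturbation argument is needed. The real technical work specific to the ball case is what you dismiss as ``fairly standard'': showing that a.a.s.\ every solid cap of radius $\varepsilon\approx n^{-1/(2(d-1))}$ contains only $O(\varepsilon^{d-1}n)=O(\sqrt n)$ vertices of $P$. In the sphere model this is a Chernoff bound, but in the ball model the number of vertices in a cap is a complicated functional of the $m$ sample points, and the paper needs an Efron--Stein-type exponential concentration inequality (\cref{lem:BLM}) together with variance estimates drawn from Reitzner's work (\cref{lem:ES-term}) to establish it (\cref{lem:ball-distributed}). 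That lemma, plus the fact that facets are a.a.s.\ encapsulated by tiny caps (\cref{lem:cap-diameter-ball}), is the entire $d$-ball-specific content; there is no shell-concentration step and no projection step. Your high-level intuition --- that the cap construction should not care whether vertices are exactly on the sphere --- is correct, but the path to making it rigorous is to state the deterministic machinery generally from the outset, not to perturb the spherical model.
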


In \cref{thm:ball}, the significance of the expression defining $n$ is that the numbers of vertices and facets of the polytope $P$ are both a.a.s.\ of the form $\Theta(n)$ (see for example \cite{Rei05}). In the setting of \cref{thm:sphere}, the number of vertices of $P$ is always exactly $n$, and the expected number of facets of $P$ is known to be of the form $\Theta(n)$ (see \cite{BM85}).

We remark that the extension complexity of random polygons has previously been studied empirically in \cite{VGGT16}, using the model in \cref{thm:sphere} (with $d=2$). Furthermore, the $d=2$ case of \cref{thm:sphere} answers a question posed on the \emph{Open Problem Garden}~\cite{OPG}. On the other hand, the model of random polytopes in \cref{thm:ball} is more popular in probability theory\footnote{It is worth mentioning that the study of random polytopes is a classical topic in probability theory, started more than fifty years ago by R\'enyi and Sulanke~\cite{RS63}. See for example the surveys \cite{Bar07,Bar08,Gru97,Hug13,Sch97,Sch08,WW93} and the references therein.}.

The methods in the proofs of \cref{thm:sphere,thm:ball} seem to be quite robust, and actually do not use randomness in a very crucial way (basically, we only need the vertices of $P$ to be reasonably ``well-distributed'' and the facets of $P$ to be ``not too large''). We hope these ideas may have further applications in this area.

Our second contribution in this paper concerns the extension complexity of \emph{cyclic} polygons (i.e.\ of polygons whose vertices lie on a common circle). The proof of this result follows a similar overall approach as our proofs of \cref{thm:sphere,thm:ball}, but different ideas are required to make the approach work (in particular, we prove an inequality for slacks of cyclic polygons that plays an important role in our argument; see \cref{lem:circle-geometry}).

\begin{thm}
\label{thm:cyclic-polygon}Let $P$ be a cyclic polygon with $n$ vertices. Then $\xc(P)\leq 24\sqrt{n}$.
\end{thm}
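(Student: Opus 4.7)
The plan is to follow the Yannakakis template used in the proofs of \cref{thm:sphere,thm:ball}: reduce $\xc(P)$ to bounding the nonnegative rank of the slack matrix $S$ of $P$, and then build an explicit nonnegative factorization. After rotating and scaling, the vertices of $P$ may be taken as $(\cos\theta_{j},\sin\theta_{j})$ with $0\le\theta_{1}<\cdots<\theta_{n}<2\pi$. Applying the identity $\cos A-\cos B=-2\sin\tfrac{A+B}{2}\sin\tfrac{A-B}{2}$ to the standard formula for the slack of $v_{j}$ against the chord $v_{i}v_{i+1}$ yields the explicit expression
\[
S_{ij}=-2\,\sin\!\Bigl(\tfrac{\theta_{j}-\theta_{i}}{2}\Bigr)\sin\!\Bigl(\tfrac{\theta_{i+1}-\theta_{j}}{2}\Bigr).
\]
The key structural feature is that each entry of $S$ factors (up to sign) as the product of two trigonometric terms, each of which is a rank-$2$ bilinear form in the coordinates $\bigl(\cos(\theta_{\bullet}/2),\sin(\theta_{\bullet}/2)\bigr)$.

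I would then partition $\{1,\dots,n\}$ into $k=\lceil\sqrt n\rceil$ contiguous cyclic arcs $A_{1},\dots,A_{k}$, each of size at most $\lceil\sqrt n\rceil$, and build a nonnegative factorization of $S$ based on this partition. For each pair $(A_{p},A_{q})$ with cyclic distance at least $2$, both sine factors have constant sign on the corresponding block of $S$. After sign correction, this block is the Hadamard product of two rank-$2$ nonnegative matrices; by the Cohen--Rothblum theorem (rank equals nonnegative rank for nonnegative matrices of rank at most $2$) together with the standard inequality $\rank_+(X\odot Y)\le\rank_+X\cdot\rank_+Y$, the block has nonnegative rank at most $4$. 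The $O(\sqrt n)$ ``near-diagonal'' blocks (cyclic distance at most $1$) can be handled separately by dissecting each block along the two sign-transition curves of the sines into a bounded number of sign-constant rectangular sub-blocks, each again contributing nonnegative rank at most $4$.

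The main obstacle is that a naive union bound over the $\Theta(k^{2})=\Theta(n)$ block pairs yields only $\rank_+(S)\le O(n)$, which is far too weak. To achieve the desired $O(\sqrt n)$ bound, contributions from different ``far'' blocks must be combined, exploiting the \emph{global} rank-$2$ structure of each sine factor as a single bilinear form (not merely its local rank-$2$ restrictions block by block). Concretely, I expect to construct a single nonnegative factorization of (a sign-corrected version of) each of the two factor matrices simultaneously across all far blocks, using $O(\sqrt n)$ rank-$1$ pieces in total; their Hadamard product then inherits a nonnegative factorization of rank $O(\sqrt n)$. The near-diagonal blocks contribute a further $O(\sqrt n)$ terms, and careful bookkeeping of the multiplicative constants (the factor of $2$ from the product expression, the rank-$2$ factor sizes, and the sub-division overhead along the diagonal) should yield the explicit bound $24\sqrt n$.
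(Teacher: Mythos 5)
The starting explicit formula for the slack entries is correct: writing $v_j=(\cos\theta_j,\sin\theta_j)$ and using $\cos A-\cos B=-2\sin\frac{A+B}{2}\sin\frac{A-B}{2}$, the slack of $v_j$ against the edge $v_iv_{i+1}$ (suitably normalised) is indeed $-2\sin\frac{\theta_j-\theta_i}{2}\sin\frac{\theta_{i+1}-\theta_j}{2}$, and on a block where both sine factors have constant sign this is a Hadamard product of two rank-$2$ nonnegative matrices, hence has nonnegative rank at most $4$ by Cohen--Rothblum and the inequality $\rank_+(X\odot Y)\le\rank_+X\cdot\rank_+Y$. However, the plan breaks down exactly at the step you yourself flag as the ``main obstacle,'' and the proposed fix is not actually available.

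The difficulty is that the rank-$2$ structure is a fact about the \emph{signed} factor matrices $\bigl(\sin\frac{\theta_j-\theta_i}{2}\bigr)_{i,j}$ and $\bigl(\sin\frac{\theta_{i+1}-\theta_j}{2}\bigr)_{i,j}$, whereas nonnegative factorisations must be built from their \emph{sign-corrected} (absolute-value) versions, and those are no longer rank $2$ on any set of rows/columns that crosses a sign change. As soon as you aggregate far blocks, the sign pattern varies from block to block, so there is no single matrix with ``global rank-$2$ structure'' that Cohen--Rothblum could apply to; you would be applying it $\Theta(n)$ times locally, which is precisely the $O(n)$ bound you are trying to escape. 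Even ignoring that, the Hadamard-product bound is multiplicative: if each factor matrix admitted a nonnegative factorisation with $\Theta(\sqrt n)$ rank-$1$ pieces, their Hadamard product would give you $\Theta(n)$ pieces, not $\Theta(\sqrt n)$ — the product of two $\Theta(\sqrt n)$-term decompositions does not ``inherit'' a $\Theta(\sqrt n)$-term one in general. (For the terms to collapse you would need almost all pairs of rank-$1$ summands to have disjoint supports, but here both factors concentrate on the same arcs, so no such cancellation occurs.) In short, the proposal correctly observes that a block-by-block argument gives $O(n)$ and that something global is needed, but the specific global step it envisions does not exist, and no substitute for it is supplied.

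The paper's proof avoids this by not trying to build the factorisation block-by-diagonal-block at all. It partitions the edges into $\lceil\sqrt n\rceil$ consecutive blocks as you do, obtains arcs, and colours the arcs with $O(1)$ colours so that same-coloured arcs are mutually well-separated. Then, for each colour, it rescales the rows of the slack matrix (finding the rescalers inductively via \cref{lem:matrix-rescaling}, with the required inequalities supplied by a single geometric lemma about cyclic polygons, \cref{lem:circle-geometry}), subtracts a family of only $O(\sqrt n)$ nonnegative vectors to zero out the diagonal blocks, and applies Shitov's \cref{lem:shitov-original} to each of the $O(\sqrt n)$ remaining ``patch'' submatrices, each contributing $O(1)$ to the nonnegative rank. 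The cyclic structure of $P$ enters only through that one geometric lemma; no explicit trigonometric factorisation is used. This combination of rescaling, subtraction, and Shitov's lemma is the missing idea in your outline — without it, the far blocks cannot be combined into $O(\sqrt n)$ rank-$1$ pieces.
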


The bound in \cref{thm:cyclic-polygon} is tight up to the constant factor $24$, as the $d=2$ case of \cref{thm:sphere} shows. We made no attempt to optimise the constant $24$.

It seems plausible that random polytopes exhibit nearly-worst-case extension complexity (for example, this was suggested as a possibility in \cite{Lee13,TheBlog,VGGT16}), and cyclic polygons seem to represent quite a diverse cross-section of the space of all polygons. So, in light of \cref{thm:sphere,thm:ball,thm:cyclic-polygon} it is tempting to (quite ambitiously) conjecture that for fixed $d$, in fact \emph{all} $d$-dimensional $n$-vertex polytopes have extension complexity $O(\sqrt n)$.

By contrast, until recently it seems that the prevailing belief in the area was that the dimension of a polytope $P$ provides essentially no control over its extension complexity. For example, there was a conjecture (see for example \cite{BL09}) that for all $n$ there is an $n$-gon with extension complexity $n$ (that is to say, the trivial bound cannot be improved even in dimension $d=2$). It has also been suggested (see for example \cite{Lee13,TheBlog}) that in contrast to the $d=2$ case of \cref{thm:sphere,thm:ball}, in fact almost all $n$-gons may have extension complexity $\Omega(n)$ (for some appropriate notion of ``almost all'').

In the last few years there have been a number of results challenging this belief, showing that in the case of dimension $d=2$ the extension complexity cannot be too large. First, Shitov~\cite{Shi14} and Padrol and Pfeifle~\cite{PP15} independently proved that every $n$-gon has extension complexity at most $(6n+6)/7$. Shortly afterwards, Shitov~\cite{Sh14v1} proved the much stronger result that every $n$-gon has extension complexity at most $o(n)$, and very recently he \cite{Sh14v2} improved this bound to $O(n^{2/3})$. However, it appears that before the present paper, no nontrivial upper bounds were known for any reasonably general class of polytopes in any dimension $d\ge 3$.

For our final theorem, we consider the case where the dimension $d$ is allowed to grow slowly with the number of vertices $n$. We show that in this case, the trivial upper bound $\xc(P)\le n$ is in fact nearly best-possible, confirming in a weak sense that the dimension of a polytope $P$ provides little control over its extension complexity.

\begin{thm}\label{thm:polytope-separation}
For any $n$, there is a polytope with at most $n$ vertices, dimension at most $n^{o(1)}$, and extension complexity $n^{1-o(1)}$.
\end{thm}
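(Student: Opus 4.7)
My plan is to apply Yannakakis's theorem to reduce the problem to a question about nonnegative matrix factorisations. Yannakakis's theorem identifies $\xc(P)$ with the nonnegative rank $\rank_{+}(S_P)$ of the slack matrix of $P$, while the ordinary rank of $S_P$ equals $\dim(P)+1$. So it suffices to exhibit a polytope whose slack matrix has ordinary rank $n^{o(1)}$, nonnegative rank at least $n^{1-o(1)}$, and at most $n$ rows.

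The first step is to invoke a construction from the literature on gaps between rank and nonnegative rank, in the spirit of Shitov's results. Such constructions produce an $N\times N$ nonnegative matrix $M$ with $\rank(M)=N^{o(1)}$ and $\rank_{+}(M)\ge N^{1-o(1)}$. These matrices are purely linear-algebraic objects and do not, a priori, arise from a polytope, so some additional work will be needed to geometrize them.

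The second step---the crux of the argument---is to realise $M$ (or a suitable modification) as a submatrix of the slack matrix of an actual polytope $P$ of dimension at most $N^{o(1)}$. Slack matrices carry substantial geometric structure: their rows correspond to vertices and their columns to facet-defining inequalities, and together these must form a consistent configuration in $\RR^{\dim P}$. The plan is to supplement $M$ with $N^{o(1)}$ auxiliary rows and columns that encode the geometric data needed to turn the combined matrix into a bona fide slack matrix. Adding a small number of rows or columns increases the ordinary rank by at most that number (so the dimension stays $N^{o(1)}$) and can only increase the nonnegative rank (so the extension complexity bound survives).

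The main obstacle is this geometrisation step: slack matrices are considerably more restrictive than arbitrary low-rank nonnegative matrices, and ensuring that a Shitov-type matrix $M$ can be realised as (a block of) one while simultaneously controlling the ordinary rank and the nonnegative rank within the stated tolerances is the delicate technical point. Once it is accomplished, Yannakakis's theorem gives $\xc(P)\ge\rank_{+}(M)\ge N^{1-o(1)}$, while $\dim P=N^{o(1)}$ and $P$ has $N(1+o(1))$ vertices; choosing $N=n(1-o(1))$ then yields the theorem. A polar dual (which preserves $\xc$ and exchanges vertices with facets) converts the resulting statement into the ``few facets'' formulation given in the introduction.
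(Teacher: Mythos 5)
There are two significant gaps in your proposal.

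First, the matrix-construction step cannot simply be outsourced. You write that the plan is ``to invoke a construction from the literature on gaps between rank and nonnegative rank, in the spirit of Shitov's results,'' producing an $N\times N$ nonnegative matrix with $\rank(M)=N^{o(1)}$ and $\rank_+(M)\ge N^{1-o(1)}$. No such construction was previously known: this is in fact the main new content of the theorem, answering a question of Hrube\v s. (Shitov's results go in the opposite direction, giving \emph{upper} bounds on nonnegative rank.) The strongest previously known gap was $\rank_+/\rank\ge n^{\log_2(3/2)}$, from Kaibel and Weltge's bound on the extension complexity of the correlation polytope, which is very far from $n^{1-o(1)}$. The paper builds the required matrix explicitly: it is indexed by $\{0,1\}^r\times\{0,1\}^r$ with $M_{a,b}=\prod_{q\in Q}(a\cdot b-q)^2$, where $Q$ is the set of multiples of $\lceil\sqrt r\rceil$ in $\{0,\dots,r\}$; the rank bound comes from expanding this as a low-degree polynomial in $a_1b_1,\dots,a_rb_r$, and the nonnegative-rank lower bound comes from the rectangle covering bound combined with Sgall's two-family restricted-intersection theorem. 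None of this can be taken for granted.

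Second, your geometrisation step is both vaguer than necessary and aimed at a harder target than the paper's. You propose to realise $M$ as a \emph{submatrix} of a slack matrix by appending $N^{o(1)}$ auxiliary rows and columns, and you correctly flag this as the ``delicate technical point'' without resolving it. The paper avoids this difficulty entirely: it uses a known reduction (\cref{lem:gap-equivalence}) that rescales the rows of $M$ to sum to $1$, so that they lie in the standard simplex $\Delta\subseteq\RR^n$ and simultaneously in an affine subspace of dimension at most $\rank M-1$; the polytope $P$ is the intersection of that affine subspace with $\Delta$. Its slack matrix $M'$ (with respect to the $n$ constraints $x_i\ge 0$) has as its rows the coordinate vectors of the vertices of $P$, and each row of $M$ is a \emph{convex combination} of rows of $M'$ rather than a row of $M'$ itself. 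That is already enough to conclude $\rank_+ M\le\rank_+ M'=\xc(P)$, with $\dim P\le\rank M-1$ and $P$ having at most $n$ facets; polar duality then gives the vertex form. So the submatrix realisation you are worried about is not needed, and without supplying it your argument does not close the gap you yourself identify.
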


Prior to the present paper, it seems that the best known lower bound for the extension complexity in the setting of \cref{thm:polytope-separation} was $n^{\log_2(3/2)}$: Kaibel and Weltge~\cite{KW15} proved that the so-called \emph{correlation polytope} with $n=2^r$ vertices (which has dimension at most $r^2=n^{o(1)}$) has extension complexity at least $(3/2)^r$.

Using Yannakakis' theorem~\cite{Yan91}, we deduce \cref{thm:polytope-separation} from a slightly stronger result. Namely, answering a question of Hrube\v s~\cite{Hru12}, we prove that there is a nonnegative $n\times n$ matrix $M$ such that $\rank_+ M/\rank M=n^{1-o(1)}$. This matrix $M$ has a simple algebraic description, in a similar spirit as some matrices previously considered in connection with extension complexity and nonnegative rank (for example the so-called \emph{unique disjointness matrices}~\cite{FMPTdW15,VGGT16}, and the so-called \emph{Euclidean distance matrices}~\cite{BL09,Hru12,LC10,Shi19b}\footnote{Actually, in \cite{LC10} the authors claim that the Euclidean distance matrix of $n$ generic points in $\RR^1$ has rank $3$ and nonnegative rank $n$. This would imply the existence of an $n$-gon with extension complexity $n$. Unfortunately there is a fatal mistake in their proof.}). In order to lower-bound the nonnegative rank of $M$, we use a result of Sgall~\cite{Sga99} that can be interpreted as a two-family version of the celebrated Frankl--Wilson restricted intersection theorem~\cite{FW81}.

We prove \cref{thm:polytope-separation} in \cref{sec:separation} after introducing some basic definitions and notation in  \cref{sec:preliminaries-notation}. The rest of the paper is devoted to the proofs of \cref{thm:sphere,thm:ball,thm:cyclic-polygon}. In \cref{sect-outline}, we will give outlines of these proofs and explain the organisation of the rest of the paper.

\section{Preliminaries and Notation}
\label{sec:preliminaries-notation}

We say a matrix is \emph{nonnegative} if all of its entries are nonnegative.

\begin{defn}
 The \emph{nonnegative rank} $\rank_{+}M$ of a nonnegative $m\times n$ matrix $M\in\RR_{\geq 0}^{m\times n}$ is the minimum $r$ such that there is a factorisation $M=TU$, where
$T\in\RR_{\geq 0}^{m\times r}$ and $U\in\RR_{\geq 0}^{r\times n}$ are nonnegative matrices with $r$ columns and $r$ rows, respectively.
\end{defn}

One can equivalently define the nonnegative rank of $M\in\RR_{\geq 0}^{m\times n}$ to be the minimum $r$ such that the matrix $M$ can be written as the sum of $r$ nonnegative matrices of (ordinary) rank 1. Also note that $M$ has nonnegative rank at most $r$ if and only if there are nonnegative vectors $t_1,\dots,t_r\in \RR_{\geq 0}^m$ such that every column of $M$ can be written as a nonnegative linear combination of $t_1,\dots,t_r$. Similarly, $M$ has nonnegative rank at most $r$ if and only if there are nonnegative vectors $u_1,\dots,u_r\in \RR_{\geq 0}^n$ such that every row of $M$ can be written as a nonnegative linear combination of $u_1,\dots,u_r$. Finally, note that the nonnegative rank of a matrix is not affected by rescaling any of its rows of columns by a positive constant.

We can describe any polytope $P\su \RR^d$ by a finite list of linear constraints, i.e.\ we can represent $P$ in the form $P=\{ x\in \RR^d\mid Ax\le b\}$ for some matrix $A$ and some vector $b$ (so there is a constraint corresponding to each row of $A$). For a vertex $v$ of $P$ and a constraint $a_j\cdot x\leq b_j$ (where $a_j$ is a row of $A$, and $b_j$ is the corresponding entry of $b$), we say that $b_j-a_j\cdot v\geq 0$ is the \emph{slack} of $v$ with respect to this constraint.

\begin{defn}\label{def:slack-matrix}
A \emph{slack matrix} of a polytope $P \subseteq\RR^{d}$ is a matrix whose rows are indexed by the vertices of $P$ and whose columns are indexed by the linear constraints in some description $P=\{ x\in \RR^d\mid Ax\le b\}$, such that the entries of the matrix are precisely the slacks of the vertices with respect to the constraints.
\end{defn}

Note that a polytope $P$ does not uniquely define a slack matrix, because given any description of $P$ we can always rescale the constraints or add redundant constraints. Also, we will sometimes want to consider a description of a polytope $P$ which consists of equations as well as inequalities, i.e.\ we may wish to consider a description of the form $P=\{ x\in \RR^d\mid Ax\le b,\,A'x=b'\}$. We can still define a slack matrix in exactly the same way, using the inequalities in this description (as before, each column contains the slacks with respect to an inequality $a_j\cdot x\le b_j$). The equations $A'x=b'$ play no role in the slack matrix\footnote{Every equation can be equivalently expressed as two opposite inequalities, and the slack of every vertex with respect to these inequalities is zero. So, including these inequalities in the slack matrix would only introduce some additional zero columns, which would be inconsequential for our purposes.}.

For a $d$-dimensional polytope $P\su \RR^d$, we will usually consider a description $P=\{ x\in \RR^d\mid Ax\le b\}$ where the constraints are in correspondence with the facets of $P$. Then the columns of the corresponding slack matrix $M$ are indexed by the facets of $P$ (and, as always, the rows are indexed by the vertices of $P$). For a vertex $v$ and a facet $f$, the entry $M_{v,f}$ is the slack of the vertex $v$ with respect to the facet $f$ (or more precisely, with respect to the constraint associated with the facet $f$).

It is well known (see for example \cite[Theorem~14]{GGKPRT13}) that the rank of any slack matrix of a polytope $P\su \RR^d$ is 1 greater than the dimension of $P$ (note that the dimension of $P$ may be smaller than $d$). Also, note that the slack matrix of a polytope is always a nonnegative matrix. Yannakakis \cite{Yan91} proved the following famous theorem, connecting the extension complexity of a polytope to the nonnegative rank of its slack matrix\footnote{The theorem is stated in a slightly different way in Yannakakis' paper; see for example \cite[Theorem~2.6]{FKPT13} for this particular statement.}.

\begin{thm}\label{thm:yannakakis}
The extension complexity $\xc(P)$ of any polytope $P$ equals the nonnegative rank of any slack matrix of $P$.
\end{thm}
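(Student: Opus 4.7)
The plan is to prove the two inequalities $\xc(P)\le \rank_+ M$ and $\xc(P)\ge \rank_+ M$ separately. Write $P=\{x\in\RR^d:Ax\le b\}$ with vertices $v_1,\dots,v_m$, and take the slack matrix $M\in\RR_{\ge 0}^{m\times n}$ whose columns correspond to the facets of $P$, so $M_{ij}=b_j-a_j\cdot v_i$. The ``any slack matrix'' wording of the theorem will follow from the facet case: additional redundant columns lie in the nonnegative span of the facet columns and positive rescalings of columns do not affect nonnegative rank.

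For $\xc(P)\le \rank_+ M$, take a nonnegative factorisation $M=TU$ of rank $r=\rank_+ M$, with rows $t_i\in\RR_{\ge 0}^r$ of $T$ and columns $u_j\in\RR_{\ge 0}^r$ of $U$, so that $u_j\cdot t_i=M_{ij}$. I define the lifted polytope
\[
Q=\{(x,y)\in \RR^d\times\RR^r:y\ge 0,\ a_j\cdot x+u_j\cdot y=b_j\text{ for all }j=1,\dots,n\},
\]
with projection $\pi(x,y)=x$. Each vertex lifts via $(v_i,t_i)\in Q$ because $u_j\cdot t_i=b_j-a_j\cdot v_i$, giving $P\subseteq \pi(Q)$; conversely, if $(x,y)\in Q$ then $b_j-a_j\cdot x=u_j\cdot y\ge 0$ (since both $u_j\ge 0$ and $y\ge 0$), so $x\in P$. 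Hence $\pi(Q)=P$. The $n$ equations in the description of $Q$ cut out an affine subspace and contribute no facets, while the only inequalities are the $r$ constraints $y_k\ge 0$; so $Q$ has at most $r$ facets, giving $\xc(P)\le r$. (A minor bookkeeping point: discarding any zero columns of $U$ makes $Q$ bounded, so it is indeed a polytope.)

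For $\xc(P)\ge \rank_+ M$, let $Q'=\{z\in\RR^{d'}:Cz\le e\}$ be an extension of $P$ realising $\xc(P)=r'$, with linear projection $\pi\colon\RR^{d'}\to\RR^d$ satisfying $\pi(Q')=P$. For each vertex $v_i$ I fix a preimage $z_i\in Q'$ and set $s_i:=e-Cz_i\in\RR_{\ge 0}^{r'}$, the slack vector of $z_i$ with respect to the facets of $Q'$. For each facet inequality $a_j\cdot x\le b_j$ of $P$, the affine function $\phi_j(z):=b_j-a_j\cdot\pi(z)$ is nonnegative on $Q'$ and attains $0$ at any preimage of a vertex of $P$ on facet $j$. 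Apply LP strong duality to $\max\{a_j\cdot\pi(z):z\in Q'\}$, whose optimum equals $b_j$: this yields $u_j\in\RR_{\ge 0}^{r'}$ with $u_j^\top C$ agreeing with the linear part of $a_j\cdot\pi$ and $u_j\cdot e=b_j$, whence $\phi_j(z)=u_j\cdot(e-Cz)$ as an identity of affine functions of $z$. Substituting $z=z_i$ gives $M_{ij}=u_j\cdot s_i$, i.e.\ $M=TU$ where $T$ has rows $s_i^\top$ and $U$ has columns $u_j$; both are nonnegative and the factorisation has rank at most $r'$, so $\rank_+ M\le \xc(P)$.

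The main delicate step is the LP-duality argument: the dual multipliers $u_j$ must be chosen so that $\phi_j(z)$ and $u_j\cdot(e-Cz)$ coincide as \emph{affine functions}, not merely as numbers on $Q'$ or as minimum values. Strong duality for bounded LPs provides this identity, and the fact that $a_j\cdot x\le b_j$ is a facet of $P$ (so the dual optimum is exactly $b_j$) leaves no additive slack that would otherwise force an extra rank-one summand in the factorisation. With this in hand, the substitution $z=z_i$ to read off $M_{ij}=u_j\cdot s_i$ and the reverse construction of $Q$ from a nonnegative factorisation are essentially bookkeeping.
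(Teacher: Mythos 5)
The paper does not give a proof of \cref{thm:yannakakis}: it is quoted as a known result of Yannakakis \cite{Yan91}, with \cite[Theorem~2.6]{FKPT13} cited for this exact formulation, so there is no in-paper argument to compare against. Your proof is the standard one (a nonnegative factorisation $M=TU$ yields a lift with one inequality per factor coordinate, and conversely LP strong duality applied to each facet functional of $P$ over the extension $Q'$ produces the dual certificates $u_j$ that, paired with the slack vectors $s_i$ of lifted vertices, give a nonnegative factorisation), and it is essentially correct. One small slip: in the boundedness remark for $Q$, it is the zero \emph{rows} of $U$ (and the matching columns of $T$) that must be discarded, not the zero columns --- an all-zero row $k$ of $U$ leaves $y_k$ unconstrained from above and so makes $Q$ unbounded, whereas an all-zero column $u_j$ is harmless. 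Also, the reduction of ``any slack matrix'' to the facet-only case, which you compress into one sentence, needs a short Farkas-type argument: a redundant valid inequality decomposes as a nonnegative combination of facet inequalities plus a nonnegative constant, so its slack column is a nonnegative combination of the facet slack columns and the all-ones vector; and because $P$ is a polytope of positive dimension its facet normals positively span, which lets one write the all-ones vector itself as a nonnegative combination of facet slack columns, closing the loop. Neither point affects the substance of the argument.
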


A consequence of Yannakakis' theorem is the following lemma. Roughly speaking, it states that upper bounds on the nonnegative rank of a matrix in terms of its (ordinary) rank are in a certain sense equivalent to upper bounds on the extension complexity of a polytope in terms of its dimension. Although we were not able to find this particular statement in the literature, very similar facts have implicitly been proved in various papers (see for example \cite[Theorem~3.1]{Shi14}).

\begin{lem}\label{lem:gap-equivalence}
For $n,d\in \NN$, let $f_{\mathrm{xc}}(n,d)$ be the maximum extension complexity of a polytope with dimension at most $d$ and at most $n$ facets. Furthermore, for $n,r\in \NN$, let $f_{+}(n,r)$ be the maximum nonnegative rank of a nonnegative matrix with at most $n$ columns and rank at most $r$. Then for any $n\in \NN$ and $r\ge2$, we have $f_{\mathrm{xc}}(n,r-1)=f_{+}(n,r)$.
\end{lem}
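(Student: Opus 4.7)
The plan is to prove both inequalities separately, using Yannakakis' theorem (\cref{thm:yannakakis}) together with the fact that the rank of any slack matrix of a polytope $P$ equals $\dim P+1$. The direction $f_{\mathrm{xc}}(n,r-1)\le f_{+}(n,r)$ is immediate: the slack matrix $S$ of any polytope $P$ with dimension at most $r-1$ and at most $n$ facets has at most $n$ columns and rank at most $r$, and $\xc(P)=\rank_{+}S$; padding $S$ with zero columns to reach exactly $n$ columns (which preserves both rank and nonneg rank) yields $\xc(P)\le f_{+}(n,r)$, so taking the maximum over $P$ gives the claim.

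The substance is in the reverse inequality $f_{+}(n,r)\le f_{\mathrm{xc}}(n,r-1)$. Given a nonneg matrix $M$ with $n$ columns and rank at most $r$, I will construct a polytope $Q$ of dimension at most $r-1$ with at most $n$ facets such that $\rank_{+}M\le \xc(Q)$. First, delete any zero rows of $M$ (which affects neither rank nor nonneg rank) and then rescale each remaining row by its positive row-sum, obtaining a matrix $M'$ with the same rank and nonneg rank as $M$ in which every row sums to $1$ and hence lies in the simplex $\Delta^{n-1}\subseteq\RR^{n}$. Let $A\subseteq\RR^{n}$ be the affine hull of the rows of $M'$. Since these rows span a linear subspace of dimension $\rank M'\le r$ and all lie on the hyperplane $\sum_{k}x_{k}=1$ (which avoids the origin), one checks that $\dim A=\rank M'-1\le r-1$. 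Now define
\[
Q:=A\cap\{x\in\RR^{n}:x\ge 0\}\subseteq\Delta^{n-1},
\]
which is bounded and hence a polytope, described by the affine equations cutting out $A$ together with the $n$ inequalities $x_{j}\ge 0$; thus $Q$ has at most $n$ facets and dimension at most $r-1$.

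It remains to verify $\rank_{+}M'\le\xc(Q)$. Let $S'$ be the slack matrix of $Q$ with respect to the $n$ inequalities $x_{j}\ge 0$: for each vertex $u$ of $Q$, the slack with respect to $x_{j}\ge 0$ is simply $u_{j}$, so the row of $S'$ indexed by $u$ is just the vector $u\in\RR^{n}$ itself. Every row of $M'$ lies in $Q$ and is therefore a convex (and in particular nonneg) combination of vertices of $Q$, i.e.\ of rows of $S'$. Consequently $\rank_{+}M'\le\rank_{+}S'=\xc(Q)\le f_{\mathrm{xc}}(n,r-1)$, and combining this with $\rank_{+}M=\rank_{+}M'$ finishes the proof. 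The conceptual obstacle is that a general nonneg matrix $M$ of bounded rank need not itself be a slack matrix of any polytope; the key trick is that one need not realise $M$ directly as a slack matrix, but only produce some polytope $Q$ whose slack matrix \emph{dominates} $M'$ in the sense of nonneg linear combinations, and the canonical slice $A\cap\{x\ge 0\}$ of the orthant by the row-affine-hull does the job while automatically giving the right dimension and facet count.
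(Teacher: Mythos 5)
Your proof is correct and takes essentially the same route as the paper: the easy direction is handled identically via zero-column padding, and for the substantive direction you normalise rows into the simplex, intersect the affine hull of the rows with the nonnegative orthant, and observe that the resulting polytope's slack matrix (with respect to the $x_j\ge0$ constraints) has rows equal to its vertices, so every row of $M'$ is a convex combination of rows of the slack matrix. The paper phrases the affine subspace slightly more loosely (``an affine subspace of dimension at most $r-1$'' rather than computing $\dim A=\rank M'-1$ exactly) and intersects with $\Delta$ rather than the orthant, but these give the same polytope; the argument is the same.
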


\begin{proof}
Consider a polytope $P$ with dimension at most $r-1$, at most
$n$ facets and extension complexity $f_{\mathrm{xc}}(n,r-1)$.
The slack matrix $M$ of $P$ has at most $n$ columns and rank at most $r$. By Yannakakis' theorem we have $\rank_{+}M=f_{\mathrm{xc}}(n,r-1)$, 
which shows that $f_{+}(n,r)\ge f_{\mathrm{xc}}(n,r-1)$.

On the other hand, consider a nonnegative matrix $M$ with at most $n$ columns, with rank at most $r$, and with nonnegative rank $f_{+}(n,r)$.
We may assume that $M$ has exactly $n$ columns (otherwise we can add a suitable number of all-zero columns),
we may assume that $M$ does not have any row consisting entirely of zeros (otherwise we can omit any such row), and we may also assume that the entries in each row of $M$ sum to $1$ (rescaling the rows of $M$ does not affect its nonnegative rank).
Then every row of $M$ is a vector lying in the standard simplex
\[
\Delta=\{(x_{1},\dots,x_{n})\in\RR^{n}\mid x_{1}+\dots+x_n=1,\,x_{i}\ge0\text{ for each }i\}.
\]
Since $M$ has rank at most $r$, the rows of $M$ lie in an affine subspace of $\RR^{n}$ with dimension at most $r-1$. Let $P$ be the polytope with dimension at most $r-1$ obtained by intersecting this affine subspace with the simplex $\Delta$. This polytope $P\su \RR^n$ can be described by various equations and the inequalities $x_i\ge 0$ (or, equivalently $-x_i\leq 0$, to be consistent with our notation in \cref{def:slack-matrix}) for $i=1,\dots,n$. Therefore $P$ has at most $n$ facets.

Now, let $M'$ be the slack matrix of $P$ corresponding to this description (i.e.\ the $i$-th column of $M'$ contains the slacks of the vertices of $P$ with respect to the constraint $x_i\ge 0$). By Yannakakis' theorem, $\rank_+ M'=\xc(P) \le f_{\mathrm{xc}}(n,r-1)$.

For each $i=1,\dots,n$ and each vertex $v$, the slack of $v$ with respect to the constraint $x_i\ge 0$ is precisely the $i$-th coordinate of $v$. This means that the rows of $M'$ are precisely the coordinate vectors of the vertices of $P$. Now, each row of the matrix $M$ is the coordinate vector of a point in $P$, and therefore is a convex combination of the vertices of $P$. That is to say, each row of $M$ can be written as a convex combination of the rows of $M'$. Therefore we have $f_{+}(n,r)=\rank_+M\le \rank_+ M'=\xc(P) \le f_{\mathrm{xc}}(n,r-1)$.
\end{proof}

\subsection{Notation}

In this paper, we use the notation $\NN=\{1,2,\dots\}$ for the positive integers. All logarithms are to base $e$, unless otherwise specified.

For most of the paper (from \cref{sect-outline} onward), $d\geq 2$ will always be a fixed dimension. We denote by $B\su \RR^d$ the $d$-dimensional unit ball around the origin, and by $S\su \RR^d$ the $(d-1)$-dimensional unit sphere around the origin (i.e.\ the boundary of the ball $B$).

We use common asymptotic notation. Let us stress that in all of these asymptotic notations the variable $d$ will be treated as fixed (i.e.\ the implicit constants are allowed to depend on $d$). For real-valued functions $f$ and $g$ (which will usually, but not always, be functions of $n\in \NN$), we write $f=O(g)$ to mean that there is some constant $C>0$ such that $\vert f\vert \leq Cg$. If $g$ is nonnegative, we write $f=\Omega(g)$ to mean that there is $c>0$ such that $f \geq cg$ (if $f$ and $g$ are functions of $n\in \NN$, we only require $f(n) \geq cg(n)$ for sufficiently large $n$). If $g$ is nonnegative, we furthermore write $f=\Theta(g)$ if $f=O(g)$ and $f=\Omega(g)$, i.e.\ if there are constants $c>0$ and $C>0$ such that $c g\leq f \leq C g$. For functions $f:\NN\to \RR$ and $g:\NN\to \RR_{>0}$, we write $f=o(g)$ if $f(n)/g(n)\to 0$ as $n\to\infty$, and we write $f=\omega(g)$ if $f(n)/g(n)\to \infty$ as $n\to\infty$.

\section{Separation between rank and nonnegative rank}
\label{sec:separation}

Answering a question of Hrube\v s~\cite[Question~1]{Hru12}, we prove that there exists a matrix with near-optimal separation between rank and nonnegative rank.
\begin{thm}
\label{thm:separation}For every $n\in \mathbb{N}$, there is a nonnegative $n\times n$ matrix $M$
satisfying $\rank_{+}M/\rank M=n^{1-o(1)}$.
\end{thm}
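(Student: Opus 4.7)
The plan is to construct $M$ as a matrix whose rows and columns are indexed by subsets of a ground set $[N]$, with entries defined by a nonnegative polynomial function of the intersection size. Specifically, for parameters $N$ and $k$ chosen so that $\binom{N}{k}=n$, let the rows and columns of $M$ correspond to the $k$-element subsets of $[N]$, and set $M_{A,B}=f(|A\cap B|)$ for a carefully chosen polynomial $f$ that is nonnegative on $\{0,1,\ldots,k\}$. This places $M$ in the same ``simple algebraic'' family as the unique disjointness and Euclidean distance matrices mentioned in the introduction.

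For the upper bound on $\rank M$, observe that $|A\cap B|=\sum_{i=1}^N \mathbf{1}[i\in A]\,\mathbf{1}[i\in B]$. Expanding $f(|A\cap B|)$ as a polynomial of degree $d$ in this sum and grouping monomials, one can write $M$ as a linear combination of rank-$1$ matrices of the form $(\mathbf{1}[S\su A])_A\,(\mathbf{1}[S\su B])_B^T$ indexed by subsets $S\su[N]$ with $|S|\le d$. This gives $\rank M\le\sum_{j=0}^{d}\binom{N}{j}$, and for $d$ chosen sufficiently sub-linear in $N$ we will have $\rank M=n^{o(1)}$.

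The crux of the proof is the lower bound $\rank_+ M\ge n^{1-o(1)}$, which I would establish via a rectangle-cover argument. In any nonnegative factorization $M=\sum_{i=1}^{r}u_i v_i^T$ with $u_i,v_i\ge 0$, the support of each summand is a combinatorial rectangle $\mathcal{A}_i\times\mathcal{B}_i\su\supp(M)$, and these rectangles together cover $\supp(M)$. Choose $f$ so that $M_{A,B}>0$ precisely when $|A\cap B|$ lies in a carefully designed small set $L$ of ``good'' intersection sizes; then within every rectangle $\mathcal{A}_i\times\mathcal{B}_i$ all the cross-intersections $|A\cap B|$ lie in $L$. Sgall's theorem---a two-family version of Frankl--Wilson---then bounds $|\mathcal{A}_i|\cdot|\mathcal{B}_i|$ by roughly $\binom{N}{|L|}^2$. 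By arranging $L$ so that a constant fraction of pairs $(A,B)$ satisfy $|A\cap B|\in L$ (for example by centering $L$ near the typical intersection value $k^2/N$), one concludes $r\ge \Omega\bigl(n^2/\max_i |\mathcal{A}_i||\mathcal{B}_i|\bigr)$, which will yield the desired lower bound.

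The main obstacle I anticipate is the tight parameter balancing. There is a real tension: a small set $L$ yields a strong Sgall bound and hence large $\rank_+ M$, but a polynomial $f$ vanishing at all of $\{0,\ldots,k\}\setminus L$ must have large degree, which inflates $\rank M$. A naive polynomial vanishing at individual integer values hits this barrier immediately and gives only a modest separation. Overcoming the tradeoff likely requires a more subtle algebraic construction---perhaps encoding the ``good'' intersection sizes via residues modulo a small prime, or otherwise exploiting congruence structure so that $f$ can be kept of low degree while $L$ remains small---together with careful optimization of $N$, $k$, $d$, and $|L|$ to drive the ratio $\rank_+ M/\rank M$ all the way up to $n^{1-o(1)}$.
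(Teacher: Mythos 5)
Your overall framework matches the paper's exactly: a matrix indexed by sets with entries $M_{A,B}=f(|A\cap B|)$, a low-degree polynomial $f$ expanded in the monomials of $\mathbf 1[i\in A]\,\mathbf 1[i\in B]$ for the rank upper bound, and a rectangle-cover plus Sgall argument for the nonnegative-rank lower bound. You have also correctly located the crux, and you correctly predict that the naive version of the plan fails. But the proposal stops exactly at the hard step and, more importantly, frames it in a way that would block you if you followed it literally. You treat Sgall's theorem as a bound in terms of $|L|$ (``bounds $|\mathcal A_i||\mathcal B_i|$ by roughly $\binom N{|L|}^2$'') and consequently search for a small set $L$ of allowed intersection sizes while keeping $\deg f$ low. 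These two demands really are irreconcilable, since $f$ must vanish on the complement $\{0,\ldots,k\}\setminus L$, whose size is $k-|L|$.

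The resolution inverts this picture. Sgall's theorem is a genuinely \emph{modular} statement: the quantity $s$ that controls the bound is the number of residue classes modulo $m$ realized by the intersection sizes, not the absolute size of $L$. So one makes the \emph{zero set} of $f$ small, and lets $L$ be large but miss an entire residue class. Concretely, index rows and columns by all of $\{0,1\}^r$ with $n=2^r$, set $m=\lceil\sqrt r\rceil$, let $Q$ be the multiples of $m$ in $\{0,\ldots,r\}$, and take $f(t)=\prod_{q\in Q}(t-q)^2$. Then $|Q|=O(\sqrt r)$, so $\deg f=2|Q|=O(\sqrt r)$ and $\rank M\le (r+1)^{2|Q|}=r^{O(\sqrt r)}=n^{o(1)}$; while $M_{a,b}>0$ iff $a\cdot b\not\equiv0\pmod m$, so every positive rectangle realizes at most $m-1=O(\sqrt r)$ residues mod $m$, and Sgall gives $|\mathcal A||\mathcal B|\le 2^{r+O(\sqrt r\log r)}$. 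A short probabilistic argument (condition on the last coordinate) shows at least a quarter of the $n^2$ entries are positive, so one needs $2^{r-O(\sqrt r\log r)}=n^{1-o(1)}$ rectangles. The balance $m\approx\sqrt r$, equating $\deg f\approx r/m$ with $s\approx m$, is the parameter optimization you anticipated, but it is only available once the roles of ``small set'' and ``large set'' in your plan are swapped and the modular form of Sgall is invoked. As written, the proposal identifies the obstacle but does not clear it.
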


To give a more precise estimate for the $o(1)$-term in \cref{thm:separation}, our proof shows that one can choose the matrix $M$ such that $\rank_{+}M/\rank M\geq n^{1-O\left(\log\log n/\sqrt{\log n}\right)}$.

\cref{thm:polytope-separation} stated in the introduction follows easily from \cref{thm:separation} using \cref{lem:gap-equivalence}. Indeed, \cref{thm:separation} implies that for every $n\in \NN$ there exists a nonnegative matrix with $n$ columns, rank $n^{o(1)}$ and nonnegative rank $n^{1-o(1)}$. By \cref{lem:gap-equivalence}, this means that there exists a polytope of dimension $n^{o(1)}$ with at most $n$ facets and extension complexity $n^{1-o(1)}$. Considering the polar dual of this polytope, we obtain a polytope of dimension $n^{o(1)}$ with at most $n$ vertices and extension complexity $n^{1-o(1)}$. This proves \cref{thm:polytope-separation}.

To prove \cref{thm:separation} we will need the well-known \emph{rectangle covering bound} for the nonnegative rank of a matrix. Given a nonnegtaive matrix $M$ with rows indexed by some finite set $I$ and columns indexed by some finite set $J$, a \emph{rectangle} is a product $\mathcal R=I'\times J'$ for some subsets $I'\su I$ and $J'\su J$. A \emph{rectangle covering} of the matrix $M$ is a collection of (possibly overlapping) rectangles $\mathcal R_1,\dots,\mathcal R_k$ such that we have $\{(i,j)\in I\times J\mid M_{i,j}>0\}=\mathcal R_1\cup \dots \cup\mathcal R_k$ (in other words, such that the support of $M$ is the union of the rectangles $\mathcal R_1,\dots,\mathcal R_k$, which in particular means that the matrix $M$ is strictly positive on all of these rectangles). The \emph{rectangle covering number} $\operatorname{rc}(M)$ of the matrix $M$ is the smallest possible number of rectangles in a rectangle covering of $M$. This parameter is also known as the \emph{Boolean rank} of the support matrix of $M$. The following bound is well-known and easy to prove, see for example \cite[Equation~(2)]{FKPT13}.

\begin{fact}
For any nonnegative matrix $M$, we have $\operatorname{rc}(M)\le \rank_+M$.
\end{fact}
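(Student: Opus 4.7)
The plan is to use the definition of nonnegative rank directly, via the ``sum of nonnegative rank-$1$ matrices'' characterization already noted in \cref{sec:preliminaries-notation}. Let $r = \rank_+ M$. Then we can write $M = M_1 + \dots + M_r$, where each $M_k$ is a nonnegative matrix of (ordinary) rank at most $1$.

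Next I would observe that the support of a nonnegative rank-$1$ matrix is a rectangle. Indeed, every nonnegative matrix $M_k$ of rank at most $1$ can be written as $M_k = t_k u_k^\top$ for nonnegative vectors $t_k$ (indexed by $I$) and $u_k$ (indexed by $J$). Setting $I_k = \{i \in I \mid (t_k)_i > 0\}$ and $J_k = \{j \in J \mid (u_k)_j > 0\}$, the support of $M_k$ is exactly the rectangle $\mathcal{R}_k = I_k \times J_k$.

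Finally, I would use nonnegativity to show that $\mathcal{R}_1,\dots,\mathcal{R}_r$ form a rectangle covering of $M$. Since all the $M_k$ are entrywise nonnegative, there is no cancellation, so
\[
\{(i,j) \in I\times J \mid M_{i,j} > 0\} = \bigcup_{k=1}^{r} \{(i,j) \in I\times J \mid (M_k)_{i,j} > 0\} = \mathcal{R}_1 \cup \dots \cup \mathcal{R}_r.
\]
Hence $\operatorname{rc}(M) \le r = \rank_+ M$.

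There is no real obstacle here; the only subtle point is that the identity on supports genuinely needs nonnegativity of the $M_k$ (otherwise the summands could cancel and the union of supports could strictly contain the support of the sum), and this is guaranteed by our use of the nonnegative-rank decomposition.
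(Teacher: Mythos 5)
The paper does not actually prove this fact; it simply records it as well-known and points to \cite[Equation~(2)]{FKPT13}. Your argument is correct and is essentially the standard proof: decompose $M$ into $\rank_+ M$ nonnegative rank-$1$ summands, observe that the support of each nonnegative rank-$1$ matrix $t_k u_k^\top$ is the rectangle $\supp(t_k)\times\supp(u_k)$, and then use nonnegativity of the summands (no cancellation) to conclude that the support of $M$ is exactly the union of these rectangles. You correctly flagged that nonnegativity is what prevents cancellation and makes the union of supports equal (rather than merely contain) the support of $M$. One tiny point worth being explicit about, though it causes no difficulty: if some $M_k=0$ its ``rectangle'' is empty, which can simply be discarded; and since $M\ge M_k$ entrywise, $M$ is indeed strictly positive on each nonempty $\mathcal R_k$, as the paper's definition of a rectangle covering requires.
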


We will also use the following theorem due to Sgall, appearing as \cite[Corollary~3.5]{Sga99}\footnote{We remark that there is an (inconsequential) typo in the statement of this result. The statement as printed in \cite[Corollary~3.5]{Sga99} is that $|\mathcal{A}|\cdot|\mathcal{B}|\le 2^{r+s-1}\cdot \binom{r}{\le s-1}\le 2^{r+s+H(s/r)r}$, but the middle term should be corrected to $2^{r+s-1}\cdot \binom{r}{s-1}$.}. Let $H:(0,1)\to \RR_{\geq 0}$ be the binary entropy function given by $H(x)=-x\log_{2}x-(1-x)\log_{2}(1-x)$.
\begin{thm}
\label{lem:sgall}Let $1\leq s<m\leq r$ be integers. Let $\mathcal{A}$ and $\mathcal{B}$ be families of subsets
of $\{ 1,\dots,r\} $, and suppose that the intersection sizes
$|A\cap B|$ for $A\in\mathcal{A}$ and $B\in\mathcal{B}$
take only $s$ different values modulo $m$.
Then
\[
|\mathcal{A}|\cdot|\mathcal{B}|\le2^{r+s+H(s/r)r}.
\]
\end{thm}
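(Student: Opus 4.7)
The statement is a two-family modular restricted-intersection result, a close cousin of the Frankl--Wilson theorem, so the natural plan is the polynomial method. The right-hand side $2^{r+s+H(s/r)r}$ factors, up to polynomial losses, as $2^{r}\cdot 2^{s}\cdot \binom{r}{s}$, which hints at three contributions: a $2^r$ from encoding one family by its indicator vectors in $\{0,1\}^r$, a $\binom{r}{s} \approx 2^{H(s/r)r}$ from the dimension of multilinear polynomials of degree at most $s$ in $r$ variables, and a $2^s$ from recording which of the $s$ residue classes a given intersection falls into.

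I would first reduce to the case of a prime modulus. If the intersections take at most $s$ values modulo $m$, they take at most $s$ values modulo any prime $p$ dividing $m$, so we may work in $\mathbb{F}_p$. Then, for each $B \in \mathcal{B}$, form the polynomial
\[
f_B(x_1,\dots,x_r) \;=\; \prod_{\ell \in L}\Bigl(\sum_{i \in B} x_i - \ell\Bigr) \;\in\; \mathbb{F}_p[x_1,\dots,x_r],
\]
where $L \subseteq \mathbb{F}_p$ is the set of $s$ residues taken by the intersections. After reducing modulo the relations $x_i^2 = x_i$, each $f_B$ becomes a multilinear polynomial of degree at most $s$, landing in a space $V$ of dimension $\sum_{k=0}^{s}\binom{r}{k}$. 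By design, $f_B(\chi_A) = 0$ in $\mathbb{F}_p$ for every pair $(A,B) \in \mathcal{A}\times\mathcal{B}$.

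The heart of the proof will be converting this vanishing property into a genuine \emph{product} bound on $|\mathcal{A}|\cdot|\mathcal{B}|$, rather than the usual one-sided Frankl--Wilson bound. The approach is to consider tensors of the form $h_{A,B}(y,x) = y^{\chi_A}\, f_B(x)$ inside the tensor space $\bigl(\mathbb{F}_p[y_1,\dots,y_r]/(y_i^2-y_i)\bigr)\otimes V$, whose total dimension is $2^r\cdot \dim V \le 2^{r+H(s/r)r}$, and to argue the $|\mathcal{A}|\cdot|\mathcal{B}|$ tensors $h_{A,B}$ are nearly linearly independent---losing at most a factor of $2^s$ coming from the $|L|=s$ residue classes, which enters because each $f_B$ carries redundant information about the common set $L$ that must be absorbed when comparing different $B$'s.

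The main obstacle will be establishing this near-independence. The single-family polynomial method relies on a triangularity trick, ordering the sets so that each $f_B$ takes a nonzero value at its own $\chi_B$ but vanishes on predecessors; here the ordering must be on pairs $(A,B)$, and the natural device is to sort primarily by the residue $|A\cap B|\bmod p$ (absorbing the $2^s$ slack from the $s$ choices of residue class) and secondarily by $A$ and $B$ lexicographically. Making the bookkeeping sharp so that the final dimension count lands exactly at $2^{r+s+H(s/r)r}$, and in particular keeping the prime-modulus reduction compatible with the entropy bound $\sum_{k=0}^{s}\binom{r}{k}\le 2^{H(s/r)r}$, is the delicate combinatorial step that I expect to be the crux of the argument.
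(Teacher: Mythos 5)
The paper does not prove this statement; it is invoked as a black box, citing Sgall~\cite{Sga99} (Corollary 3.5 there). So there is no ``paper's own proof'' to compare against, and I can only assess your proposal on its own merits.

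The preliminaries you set up are fine: the reduction to a prime $p\mid m$ is valid (taking at most $s$ values modulo $m$ forces at most $s$ values modulo any divisor of $m$), the multilinear polynomials $f_B$ of degree $\le s$ live in a space $V$ of dimension $\sum_{k\le s}\binom{r}{k}$, and $f_B(\chi_A)=0$ for all $(A,B)\in\mathcal{A}\times\mathcal{B}$.

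However, the central step is not merely ``delicate'' --- it is impossible as stated. The tensors $h_{A,B}=y^{\chi_A}\otimes f_B$ are rank-one elements of a tensor product, and since the monomials $y^{\chi_A}$ for $A\in\mathcal{A}$ are linearly independent, we have
\[
\dim\operatorname{span}\{h_{A,B}\colon A\in\mathcal{A},\ B\in\mathcal{B}\}
=|\mathcal{A}|\cdot\dim\operatorname{span}\{f_B\colon B\in\mathcal{B}\}
\le |\mathcal{A}|\cdot \dim V.
\]
Your claimed ``near linear independence up to a factor $2^s$'' would mean this span has dimension at least $|\mathcal{A}|\cdot|\mathcal{B}|/2^s$; combined with the display above, that would force $|\mathcal{B}|\le 2^s\dim V$ --- a bound on $\mathcal{B}$ that has nothing to do with $\mathcal{A}$ and which is simply false in general. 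A concrete counterexample: take $\mathcal{A}=\{\emptyset\}$ and $\mathcal{B}=2^{\{1,\dots,r\}}$, so $|A\cap B|=0$ for all pairs, $s=1$, and $L=\{0\}$. Then $f_B(x)=\sum_{i\in B}x_i$ is linear, so the $2^r$ tensors $h_{\emptyset,B}$ span a space of dimension at most $r+1$, whereas $|\mathcal{A}|\cdot|\mathcal{B}|/2^s=2^{r-1}$; the tensors are nowhere near independent, even though the conclusion of the theorem (a bound of $2^{r+1+H(1/r)r}\ge 2^r=|\mathcal{A}|\cdot|\mathcal{B}|$) holds trivially here. The ordering device you describe (sorting by the residue $|A\cap B|\bmod p$ and then lexicographically) does not address this obstruction: linear dependence among the $f_B$ is unavoidable once $|\mathcal{B}|$ exceeds $\dim V$, and the tensor with $y^{\chi_A}$ cannot repair it. A genuine proof of the product bound must exploit the cross-family structure in a way that does not factor through a span of rank-one tensors indexed by $\mathcal{A}\times\mathcal{B}$; for that you would need to consult Sgall's original argument.

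A minor additional caveat: $\sum_{k\le s}\binom{r}{k}\le 2^{H(s/r)r}$ only holds for $s\le r/2$, so some care with the parameter range is needed even for the dimension count you invoke, though this is secondary to the main gap above.
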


Finally, we will need the following simple fact.

\begin{lem}\label{lem:matrix-support}
Let $r\geq 1$ and $m\geq 2$ integers. Then the probability that two independent uniformly random vectors $a,b\in \{0,1\}^r$ satisfy $a\cdot b\equiv 0\pmod{m}$ is at most $3/4$.
\end{lem}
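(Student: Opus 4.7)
The plan is to reduce the question to a very simple event by conditioning carefully on $b$ and most of $a$. Concretely, I would first split on whether $b$ is the zero vector, noting that $b=0$ occurs with probability $2^{-r}$ and in that case $a\cdot b=0$ is automatically a multiple of $m$; so the lemma will follow once I can show
\[
\Pr\bigl[a\cdot b\equiv 0\pmod m \cond b\neq 0\bigr]\le \tfrac12,
\]
because then the overall probability is at most $2^{-r}+(1-2^{-r})\cdot\tfrac12 = \tfrac12+2^{-r-1}\le \tfrac34$.

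The key step, which I expect to be the crux of the argument, is the conditional estimate. Assume $b\neq 0$ and let $i^{*}$ be the smallest index with $b_{i^{*}}=1$. I would condition on $b$ and on all the coordinates $(a_j)_{j\neq i^{*}}$, leaving only the single Bernoulli coordinate $a_{i^{*}}$ random. Writing $Y=\sum_{j\neq i^{*}}a_j b_j$, the inner product is $a\cdot b=Y+a_{i^{*}}b_{i^{*}}=Y+a_{i^{*}}$, which takes the two consecutive integer values $Y$ and $Y+1$ each with probability $\tfrac12$. Since $m\ge 2$, at most one of any two consecutive integers is divisible by $m$, so conditional on $b$ and $(a_j)_{j\neq i^{*}}$ the probability that $a\cdot b\equiv 0\pmod m$ is either $0$ or $\tfrac12$. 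Averaging this conditional bound over $(a_j)_{j\neq i^{*}}$ gives the desired inequality.

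Combining the two cases then yields $\Pr[a\cdot b\equiv 0\pmod m]\le \tfrac12+2^{-r-1}$, and since $r\ge 1$ this is at most $\tfrac34$, with equality precisely at $r=1$, $m=2$. The only subtle point is the choice of the "distinguished" coordinate $i^{*}$ where $b$ has a $1$, and verifying that the conditional distribution of $a\cdot b$ indeed differs only by the single term $a_{i^{*}}b_{i^{*}}=a_{i^{*}}$; everything else is a short computation.
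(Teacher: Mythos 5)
Your proof is correct, but it takes a genuinely different route from the paper's. The paper conditions on $a_1,\dots,a_{r-1}$ and $b_1,\dots,b_{r-1}$, leaving \emph{both} $a_r$ and $b_r$ random: the increment $a_rb_r$ is then $\mathrm{Bernoulli}(1/4)$, and since at most one of the two possible values of $a\cdot b$ is divisible by $m$, the conditional probability of divisibility is at most $3/4$ regardless of what has been conditioned on. There is no case split and no need to single out a coordinate where $b=1$. Your version instead isolates the degenerate event $b=0$, and in the complementary case conditions on $b$ and on all of $a$ except the single coordinate $a_{i^*}$ at a position where $b_{i^*}=1$; the increment is then $\mathrm{Bernoulli}(1/2)$, giving the sharper conditional bound $1/2$ and the sharper overall bound $1/2+2^{-r-1}$ (with equality to $3/4$ only when $r=1$). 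The trade-off is that you need the extra split on $b=0$, which the paper avoids; in exchange you get a quantitatively stronger conclusion for $r\ge 2$. Both arguments are sound and about equally short; since the paper only needs a fixed constant strictly less than $1$, it opts for the version with no case analysis.
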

\begin{proof}
We write $a=(a_1,\dots,a_r)$ and $b=(b_1,\dots,b_r)$. Let us condition on the outcome of $a_1,\dots,a_{r-1}$ and $b_1,\dots,b_{r-1}$. Then $a_r b_r$ is equal to one with probability $1/4$ and is equal to zero with probability $3/4$. At most one of these outcomes will satisfy $a_1b_1+\dots+a_rb_r\equiv 0\pmod{m}$, so the probability of having $a\cdot b\equiv 0\pmod{m}$ is at most $3/4$.
\end{proof}

We are now ready to prove \cref{thm:separation}. The basic idea behind the proof is to construct a suitably chosen low-rank matrix $M$ whose rows and columns are indexed by vectors in $\{0,1\}^r$, and to use \cref{lem:sgall} to show that the matrix $M$ does not have any large rectangles containing only positive entries. Then the rectangle covering bound will show that the matrix $M$ has high nonnegative rank.

\begin{proof}[Proof of \cref{thm:separation}]
We may assume that $n$ is a power of $2$. Indeed, if we can construct an appropriate $n'\times n'$ matrix $M'$ for $n'=2^{\floor{\log_2 n}}$, we can obtain an $n\times n$ matrix $M$ by adding additional all-zero rows and all-zero columns to $M'$. We then have $\rank_+ M/\rank M=\rank_+ M'/\rank M'=(n')^{1-o(1)}=n^{1-o(1)}$.

So let us assume that $r=\log_{2}n$ is an integer, and that $r\geq 4$. Furthermore let $m=\lceil \sqrt{r}\,\rceil$, and let $Q=m\mathbb Z\cap \{ 0,1,\dots,r\}$ 
be the set of all multiples of $m$ in $\{ 0,1,\dots,r\}$. Note that $\vert Q\vert \leq \sqrt{r}+1=O(\sqrt{r})$.

We now define $M$ to be the $n\times n$ matrix with rows and columns indexed by $\{ 0,1\} ^{r}$, where for any $a,b\in \{ 0,1\} ^{r}$ we let $M_{a,b}=\prod_{q\in Q}(a\cdot b-q)^{2}$. Clearly, all entries of the matrix $M$ are nonnegative.

We also claim that $\rank M =r^{O(\sqrt{r})}$. Indeed, writing $a=(a_1,\dots,a_r)\in \{ 0,1\} ^{r}$ and $b=(b_1,\dots,b_r)\in \{ 0,1\} ^{r}$, we can multiply out the definition $M_{a,b}=\prod_{q\in Q}(a\cdot b-q)^{2}$ and obtain a representation of $M_{a,b}=f(a_1b_1,\dots,a_rb_r)$ as an $r$-variable polynomial of degree $2\left|Q\right|$ in the terms $a_1b_1,\dots,a_rb_r$. The polynomial $f$ consists of at most $(r+1)^{2\left|Q\right|}$ monomials, and splitting $f$ into monomials gives rise to a representation of $M$ as a sum of rank-$1$ matrices. Thus, we obtain $\rank M\le (r+1)^{2\left|Q\right|}=r^{O(\sqrt{r})}$ as desired.

By construction, for any $a,b\in \{0,1\}^r$, the matrix entry $M_{a,b}$ is zero precisely when $a\cdot b$ is divisible by $m$. Hence, by \cref{lem:matrix-support} at least a quarter of the $n^2$ entries of the matrix $M$ are nonzero.

Now, suppose $\mathcal{A}\times\mathcal{B}$ is a rectangle, given by a family of vectors $\mathcal{A}\su \{0,1\}^r$ and a family of vectors $\mathcal{B}\su \{0,1\}^r$, such that $M_{a,b}>0$ for all $a\in \mathcal{A}$ and $b\in \mathcal{B}$. We can then interpret $\mathcal{A}$ and $\mathcal{B}$ as families of subsets of $\{ 1,\dots,r\}$, and note that for any $A\in \mathcal{A}$ and $B\in \mathcal{A}$ we have $\vert A\cap B\vert\not\equiv 0\pmod{m}$. Indeed, if $a,b\in \{0,1\}^r$ are the indicator vectors corresponding to $A$ and $B$, we have $\vert A\cap B\vert=a\cdot b\not\equiv 0\pmod{m}$, since $M_{a,b}>0$. Thus, the intersection sizes $|A\cap B|$ for $A\in\mathcal{A}$ and $B\in\mathcal{B}$ take at most $m-1\leq \sqrt{r}$ different values modulo $m$, and  \cref{lem:sgall} implies that
\[|\mathcal{A}|\cdot|\mathcal{B}|\le 2^{r+\sqrt{r}+H\left(\sqrt{r}/r\right)r}=2^{r+O\left(\sqrt{r}\log r\right)}\]
(for the first inequality here we used the fact that the binary entropy function $H:(0,1)\to \RR_{\geq 0}$ is increasing on the interval $(0,\frac{1}{2})$, and that $\sqrt{r}/r<\frac{1}{2}$ by our assumption that $r\geq 4$).

In other words, any rectangle in the support of the matrix $M$ consists of at most $2^{r+O(\sqrt{r}\log r)}$ entries. Since the support of  $M$ consists of at least $n^2/4$ entries, at least
\[\frac{n^2/4}{2^{r+O\left(\sqrt{r}\log r\right)}}=2^{2r-2-r-O\left(\sqrt{r}\log r\right)}=2^{r-O\left(\sqrt{r}\log r\right)}\]
rectangles are needed to cover the support of the matrix $M$. By the rectangle covering bound, it follows that $\rank_+ M\ge 2^{r-O(\sqrt{r}\log r)}$. All in all, we obtain
\[\frac{\rank_+ M}{\rank M}\geq \frac{2^{r-O(\sqrt{r}\log r)}}{r^{O(\sqrt{r})}}=2^{r-O\left(\sqrt{r}\log r\right)}=n^{1-O\left(\log{r}/\sqrt{r}\right)}=n^{1-O\left(\log\log n/\sqrt{\log n}\right)}=n^{1-o(1)},\]
as desired.
\end{proof}

\section{Proof outlines for \texorpdfstring{\cref{thm:sphere,thm:ball,thm:cyclic-polygon}}{Theorems \ref{thm:sphere} and \ref{thm:cyclic-polygon}}}
\label{sect-outline}

The rest of the paper is devoted to the proofs of \cref{thm:sphere,thm:ball,thm:cyclic-polygon}, and this section contains outlines of these proofs. At the end of the section, we describe how the components of these proofs are organised in the rest of the paper.

\subsection{Random polytopes}

The lower bounds on the extension complexity in \cref{thm:sphere,thm:ball} can be proved with an approach of Fiorini, Rothvo{\ss} and Tiwary~\cite{FRT12} (their work was in the case of $d=2$, but the approach can easily be generalised to higher dimensions). We will now outline the proofs of the upper bounds in \cref{thm:sphere,thm:ball}, which require several new ideas.

In the setting of \cref{thm:ball}, where $P$ is the convex hull of many random points inside the ball $B$, intuition suggests that $P$ is likely to ``fill out'' most of the ball, meaning that its vertices are likely to be very close to the surface of $B$. This intuition can be made precise, and for this reason the proofs of the upper bounds in \cref{thm:sphere,thm:ball} are very similar. We will therefore focus this outline on the setting of \cref{thm:sphere}, where each of our random points on the sphere is automatically a vertex of the polytope $P$. So let $P$ be a random polytope, given as the convex hull of a random set $V$ of $n$ vertices on the sphere, let $F$ be the set of facets of $P$, and let $M$ be a slack matrix of $P$ (with rows indexed by $V$ and columns indexed by $F$).

Perhaps the most important insight driving the proof is as follows.
Given our polytope $P$, consider a small ``patch'' of facets $F'$
(near the north pole of the sphere, say), and consider a collection
of vertices $V'$ which are far away from $F'$ (being at least
five times as far from the north pole as the facets in $F'$, say; see the left side of \cref{fig:lampshade}). Then,
if we consider the $V'\times F'$ submatrix\footnote{We remark that this can be interpreted as the slack matrix for a \emph{pair} of polyhedra. The idea of considering pairs of polyhedra can also be found for instance in \cite{Pas12}.} $M[V',F']$
of the slack matrix $M$ consisting only of the slacks between vertices
in $V'$ and facets in $F'$, we have $\rank_{+}M[V',F']=O(1)$.

The reason for this is that we can ``hang a polyhedral lampshade''
from the gap between $F'$ and $V'$ (see the right side of \cref{fig:lampshade}). More precisely, we can find a polytope $Q$ with relatively few vertices (looking like
a polyhedral approximation of a truncated cone) which fully encloses
all the vertices in $V'$, and which lies completely on the ``positive slack'' side of the facets in $F'$ (meaning that for each facet $f\in F'$, $Q$ and $P$ lie on the same side of the hyperplane through $f$) . Crucially, one can ensure that the number
of vertices of $Q$ is bounded, depending on
the ambient dimension $d$ (but not on $|F'|$ or $|V'|$). Every vertex in $V'$ can then be expressed as a convex combination of the $O(1)$ vertices of the polytope $Q$. Now,
for a facet $f$ corresponding to a constraint $a\cdot x\le b$, the
slack function $\psi_{f}:x\mapsto b-a\cdot x$ is an affine-linear map. So for
each vertex $w$ of $Q$ we can consider the nonnegative vector of slacks $u_w=(\psi_{f}(w))_{f\in F'}$,
and observe that every row of the matrix $M[V',F']$ can be expressed
as a convex combination of these vectors $u_w$. This certifies that
$\rank_{+}M[V',F']=O(1)$.

\begin{figure}
\begin{center}\includegraphics[scale=0.7,trim={2.5cm 2cm 2.5cm 4cm},clip]{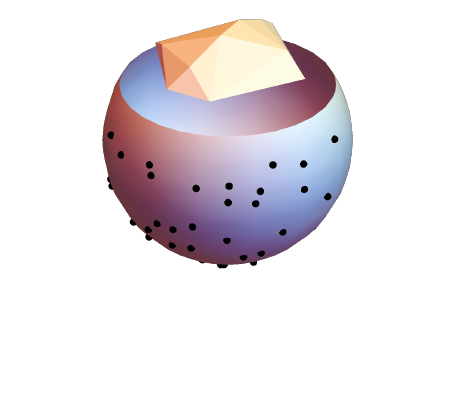}\includegraphics[scale=0.7,trim={0 2cm 0 4cm},clip]{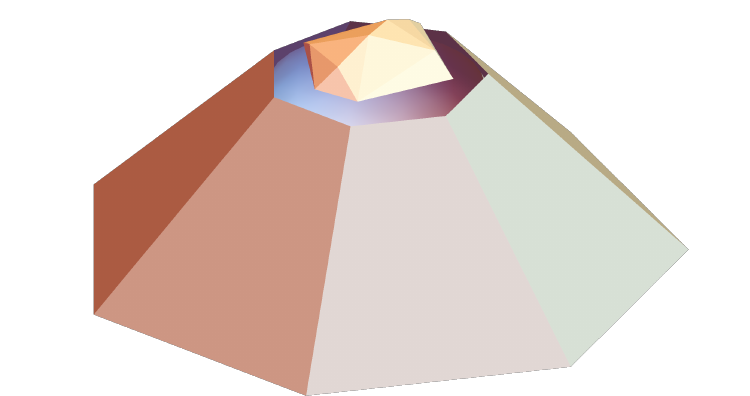}
\end{center}\caption{\label{fig:lampshade}On the left, a small patch of facets near the north pole is far away
from a collection of vertices. On the right, a ``polyhedral lampshade''
encloses all the vertices in our collection, and lies entirely on the ``positive slack'' side of each of the facets in our patch.}
\end{figure}

In fact, one can use an appropriately chosen ``polyhedral lampshade'' as above not only to show that the $V'\times F'$ submatrix $M[V',F']$ of the slack matrix $M$ satisfies $\rank_{+}M[V',F']=O(1)$, but also to show this for certain modified versions of the matrix $M[V',F']$ (where we are allowed to make certain subtractions from $M[V',F']$).

This approach is heavily inspired by Shitov's proof that every polygon has sublinear extension complexity~\cite{Sh14v1}\footnote{To clear up some potential confusion: Shitov's original $o(n)$ bound for the extension complexity of any $n$-gon~\cite{Sh14v1} and his later $O(n^{2/3})$ bound~\cite{Sh14v2} appeared on the arXiv as multiple versions of the same paper. Since these two versions feature completely different proofs, and we want to refer specifically to a lemma in the first version, we have made the slightly unusual choice to cite them as different papers.}. In fact, in the special case where $d=2$ this argument essentially appears in \cite[Lemma~3.1]{Sh14v1}. However, there are several difficulties in higher dimensions that do not present themselves in the two-dimensional case: in particular, we remark that when $d=2$ it is actually not necessary to have a ``gap'' separating the facets in $F'$ from the vertices in $V'$.

In order to find ``patches'' of facets to apply the above ideas in our proof of \cref{thm:sphere}, we consider a suitably chosen collection of $O(\sqrt{n})$ spherical caps covering the surface of the sphere, each with the same radius $\eps$ (chosen such that the surface area of each cap is about $1/\sqrt n$). Recalling that $P$ is a random polytope, it is easy to show that its facets are typically quite small, and we will be able to show that a.a.s.\ each facet of $P$ is ``inside'' one of the caps in our collection (for a slightly technical notion of being ``inside'', which is not too important for this proof outline). Using the randomness of $P$ it is also easy to show that a.a.s.\ each of the caps in our collection contains $O(\sqrt{n})$ vertices of $P$ (in fact, we will need something slightly stronger, namely that each cap has at most $O(\sqrt{n})$ vertices of $P$ within distance $5\eps$ of the centre of the cap).

We then want to assign colours to each of our caps, in such a way that any two caps of the same colour are far apart from each other (say, their centres have distance at least $30\eps$). Using standard packing-and-covering arguments, we can choose our collection of $O(\sqrt{n})$ caps in such a way that only $O(1)$ colours are required. For each colour $c$, let $F_c\su F$ be the set of facets lying ``inside'' a cap of colour $c$. It will then suffice to show that the $V\times F_c$ submatrix $M[V,F_c]$ of the slack matrix $M$ (consisting only of the columns corresponding to facets in $F_c$) has nonnegative rank $O(\sqrt{n})$. Indeed, showing $\rank_+ M[V,F_c]=O(\sqrt{n})$ for each of the $O(1)$ colours $c$ would imply that $\xc(P)=\rank_+ M=O(\sqrt{n})$. The left side of Figure \ref{figure-slack-matrix-decomposition} shows the decomposition of $M$ into the submatrices $M[V,F_c]$.

\begin{figure}
    \centering
\begin{tikzpicture}[scale=0.75]
\fill[gray,opacity=0.3] (-2,2.25) rectangle (-1.25,1.35);
\fill[gray,opacity=0.3] (-1.25,1.35) rectangle (-0.4,0.4);
\fill[gray,opacity=0.3] (-0.4,0.4) rectangle (0.6,-0.4);
\fill[gray,opacity=0.3] (0.6,-0.4) rectangle (1.3,-1.45);
\fill[gray,opacity=0.3] (1.3,-1.45) rectangle (2,-2.25);
\draw  (-2,-2.25) rectangle (2,2.25);
\draw  (-2,1.35) -- (2,1.35);
\draw  (-2,0.4) -- (2,0.4);
\draw  (-2,-0.4) -- (2,-0.4);
\draw  (-2,-1.45) -- (2,-1.45);
\draw  (-1.25,-2.25) -- (-1.25,2.25);
\draw  (-0.4,-2.25) -- (-0.4,2.25);
\draw  (0.6,-2.25) -- (0.6,2.25);
\draw  (1.3,-2.25) -- (1.3,2.25);
\node[anchor=south] at (-0.8,2.25) {$F^D$};
\node[anchor=east] at (-2,0.875) {$V^D$};
\draw [decorate, decoration = {brace,raise=15pt}] (-2,2.25) --  (2,2.25) node[pos=0.5, above=15pt] {$F_c$};
\draw [decorate, decoration = {brace,raise=20pt}] (-2,-2.25) --  (-2,2.25) node[pos=0.5, left=21pt] {$V_c$};
\draw[dotted, gray]  (-8.625,0) -- (-8.625,-2.25);
\draw[dotted,gray]  (-8.2,0) -- (-8.2,-2.25);
\draw[dotted,gray]  (-7.7,0) -- (-7.7,-2.25);
\draw[dotted,gray]  (-7.35,0) -- (-7.35,-2.25);
\draw  (-9,-2.25) rectangle (-7,2.5);
\draw  (-9,0) -- (-7,0);
\draw  (-8.625,0) -- (-8.625,2.5);
\draw  (-8.2,0) -- (-8.2,2.5);
\draw  (-7.7,0) -- (-7.7,2.5);
\draw  (-7.35,0) -- (-7.35,2.5);
\node[anchor=south] at (-8,2.5) {$F_c$};
\node[anchor=east] at (-9,1.25) {$W_c$};
\node[anchor=east] at (-9,-1.125) {$V_c$};
\draw [decorate, decoration = {brace,raise=20pt}] (-9,-2.25) --  (-9,2.5) node[pos=0.5, left=21pt] {$V$};
\draw  (-17,-1.25) rectangle (-13.5,1.25);
\draw  (-16.1,-1.25) -- (-16.1,1.25);
\draw  (-14.5526,-1.25) -- (-14.5526,1.25);
\node at (-15.3,0) {$\dots$};
\node[anchor=east] at (-17,0) {$V$};
\node[anchor=south] at (-14,1.25) {$F_c$};
\draw [decorate, decoration = {brace,raise=15pt}] (-17,1.25) --  (-13.5,1.25) node[pos=0.5, above=15pt] {$F$};
\draw [->,>=stealth,very thick] (-10.9,.0) -- (-14.1,0);
\draw [->,>=stealth,very thick] (-3.9,.0) -- (-8,-1.125);
\end{tikzpicture}
\vspace{0.2cm}
\caption{The left side shows the slack matrix of $P$, divided into $O(1)$ submatrices $M[V,F_c]$ for each colour $c$. The middle picture shows one of these matrices $M[V,F_c]$, with its row set divided into $W_c$ (the set of vertices far away from all caps of colour $c$) and $V_c=V\setminus W_c$, and its column set divided into $O(\sqrt{n})$ blocks corresponding to the caps of colour $c$. Each of the submatrices in the top part of the middle picture has nonnegative rank $O(1)$ by the lampshade argument. The right side shows the matrix $M[V_c,F_c]$, with the diagonal blocks $M[V^D,F^D]$ in grey.}
\label{figure-slack-matrix-decomposition}
\end{figure}
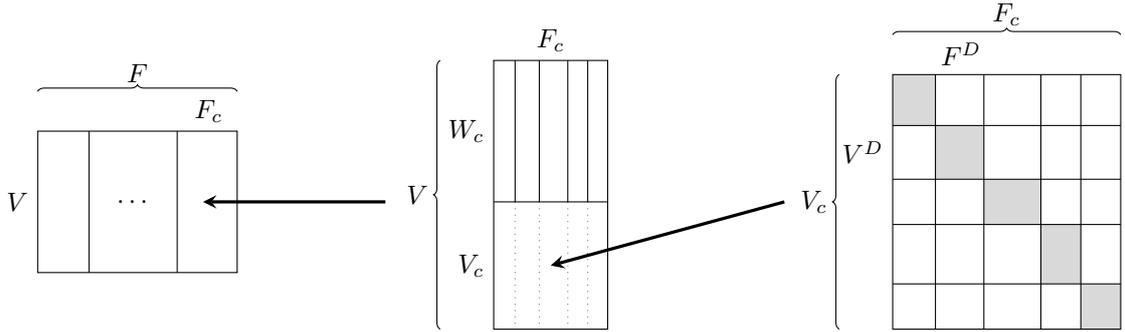

For any of the $O(\sqrt{n})$ caps of colour $c$ we obtain a patch of facets of $P$, namely the facets ``inside'' this cap. When applying the lampshade argument to such a patch of facets $F'$ inside a given cap of colour $c$, we obtain that $\rank_+ M[V',F']=O(1)$ for any set $V'$ of vertices that are sufficiently far away from the cap (say, that have distance at least $5\eps$ from the centre of the cap). We can use this argument to show that $\rank_+ M[W_c,F_c]\leq \sum_{F'} \rank_+ M[W_c,F']=O(\sqrt{n})$, where $W_c$ is the set of vertices that are far away from all caps of colour $c$ (here, the sum is over the patches of facets $F'$ obtained from each of the $O(\sqrt{n})$ caps of colour $c$). The middle picture in Figure \ref{figure-slack-matrix-decomposition} shows the matrix $M[V,F_c]$, with its top part $M[W_c,F_c]$ being decomposed into the submatrices $M[W_c,F']$ for these patches of facets $F'$ (each of which satisfies $\rank_+ M[W_c,F']=O(1)$).

It remains to show that $\rank_+ M[V_c,F_c]\leq O(\sqrt{n})$, where $V_c=V\setminus W_c$ is the set of vertices of $P$ that are close to some cap of colour $c$. We will use the lampshade argument once again, but this time we will have to use it in its more general form, bounding the nonnegative rank of a matrix obtained by making certain subtractions from $M[V_c,F_c]$.

The details of this last part of the proof are a bit technical, but to give some rough intuition it is helpful to think about the structure of $M[V_c,F_c]$. As before, we can partition $F_c$ into patches of facets $F^D$, for the different caps $D$ of colour $c$ (where $F^D$ is the set of facets inside the cap $D$). The caps actually also provide a natural partition of $V_c$ into sets $V^D$, where each set $V^D$ is the set of vertices that are close to the cap $D$ (i.e.\ that have distance at most $5\eps$ from the centre of $D$). Now, these partitions of $V_c$ and $F_c$ induce a partition of the matrix $M[V_c,F_c]$ into blocks $M[V^D,F^{D'}]$, each containing the slacks between the vertices close to some cap $D$ of colour $c$ and the facets inside some cap $D'$ of colour $c$. Since the caps of colour $c$ are very far apart from each other, the entries in the ``diagonal'' blocks $M[V^D,F^{D}]$ are much smaller than the entries in the non-diagonal blocks $M[V^D,F^{D'}]$ for $D\neq D'$. The right side of Figure \ref{figure-slack-matrix-decomposition} shows this partition of the matrix $M[V_c,F_c]$, with the diagonal blocks $M[V^D,F^{D}]$ coloured grey.

If we imagine for a moment that the entries in the diagonal blocks $M[V^D,F^D]$ were not just small but were in fact zero, then we would be in a position to apply the lampshade argument: For each cap $D$ of colour $c$, the matrix $M[V_c,F^D]$ would consist of the zero block $M[V^D,F^D]$ and the matrix $M[V_c\setminus V^D,F^D]$, where all vertices in $V_c\setminus V^D$ are far away from $D$ (and $F^D$ is a patch of facets inside the cap $D$). We would then be able to apply the lampshade argument with $F'=F^D$ and $V'=V_c\setminus V^D$ to show that $\rank_+M[V_c,F^D]=\rank_+M[V_c\setminus V^D,F^D]=O(1)$, which would imply that $\rank_+M[V_c,F^c]\leq \sum_D \rank_+M[V_c,F^D]=O(\sqrt n)$.

Of course, we cannot assume the diagonal blocks $M[V^D,F^D]$ contain only zeroes. However, it turns out that we can make certain subtractions from the matrix $M[V_c,F_c]$, and then apply the aforementioned strategy to the $V_c\times F_c$ matrix $K$ resulting from these subtractions. The aim of these subtractions is to make all entries in the diagonal blocks $K[V^D,F^D]$ of the matrix $K$ zero, such that the arguments from the previous paragraph can be applied to $K$ (with a suitable generalisation of the lampshade argument), implying $\rank_+ K=O(\sqrt n)$. 

For making the subtractions, we will define a collection of $O(\sqrt{n})$ nonnegative vectors, constructed in a certain way from the entries of the diagonal blocks $M[V^D,F^D]$ of the matrix $M[V_c,F_c]$. From each row of the original matrix $M[V_c,F_c]$ we will subtract one of these vectors, in such a way that the resulting matrix $K$ is nonnegative and all of its diagonal blocks $K[V^D,F^D]$ are indeed zero. In order to achieve this with a collection of only $O(\sqrt n)$ vectors, we will use the fact that for each cap of colour $c$ there are only $O(\sqrt{n})$ vertices within distance $5\eps$ of the centre of the cap. Since we only subtracted $O(\sqrt{n})$ different vectors, we obtain that $\rank_+M[V_c,F_c]\leq \rank_+ K+O(\sqrt{n})\leq O(\sqrt{n})$, as desired.

\subsection{Cyclic polygons}

The overall approach for our proof of \cref{thm:cyclic-polygon} is similar to the proofs of \cref{thm:sphere,thm:ball} outlined above. Let $P$ be a cyclic polygon (with $n$ vertices on the unit circle), let $V$ be its set of vertices, $F$ its set of facets, and $M$ its slack matrix. We will consider a collection of $O(\sqrt{n})$ arcs on the unit circle, similar to the collection of caps considered above in the outline of the proof of \cref{thm:sphere}.

In contrast to the previous subsection, where we had a random polytope $P$ whose vertices were typically very well-distributed over the sphere, in the present setting the vertices might be very clustered in certain places. Since we want each of our arcs to contain only $O(\sqrt{n})$ vertices of $P$, we can no longer choose all of the arcs to be of the same size. We therefore need a more general notion of what it means for two arcs to be far away from each other, when the arcs are of different lengths (recall that for the setting of \cref{thm:sphere} above we considered two caps of radius $\eps$ to be far apart if their centres have distance at least $30\eps$). We say that two arcs of the unit circle of lengths $\eps$ and $\eps'$ are ``well-separated'' if they have distance at least, say, $5\min\{\eps, \eps'\}$ from each other.

For any cyclic polygon $P$, we can divide its facets (edges) into $\sqrt{n}$ consecutive blocks. In this way, we obtain a collection of $O(\sqrt{n})$ arcs of the circle, each containing $O(\sqrt{n})$ vertices of $P$, and such that each facet of $P$ is ``inside'' exactly one of these arcs. The arcs may have very different lengths from each other, but it is not hard to show that we can still colour the arcs with $O(1)$ colours, such that any two arcs of the same colour are well-separated from each other (in the sense defined above).

We would like to more or less imitate the proof of \cref{thm:sphere} in this setting. The main problem is that if the arcs have different sizes, then the entries of some of the diagonal blocks $M[V^D,F^D]$ can be much larger than the entries in some of the non-diagonal blocks $M[V^D,F^{D'}]$. Therefore, if we try to naively perform the same subtractions from the matrix $M[V_c,F_c]$ as we did in the proof of \cref{thm:sphere}, then the resulting matrix $K$ may have negative entries (in which case its nonnegative rank is undefined or infinite, depending on one's convention). In order to overcome this problem, we will first rescale the rows of $M[V_c,F_c]$ before performing any subtractions. It turns out that we can construct suitable rescaling factors inductively, taking advantage of certain geometric properties of the circle.

We remark that instead of the higher-dimensional ``lampshade'' argument mentioned in the previous subsection, here it is convenient to use the original two-dimensional lemma of Shitov \cite[Lemma~3.1]{Sh14v1} that inspired our higher-dimensional version. Shitov used this lemma to show that every $n$-vertex polygon satisfying a certain ``admissibility'' condition has extension complexity $O(\sqrt{n})$, and he in turn used this to show that any $n$-vertex polygon has extension complexity $o(n)$. Shitov's proof that ``admissible'' polygons have extension complexity $O(\sqrt{n})$ can be interpreted in a way that resembles the idea of rescaling the rows of the matrix $M[V_c,F_c]$. However, his rescaling factors are given by explicit formulas, and this approach crucially relies on the ``admissibility'' of the polygon. In our setting Shitov's ``admissibility'' condition does not hold, and we therefore developed a completely different way to find suitable rescaling factors for the rows of the matrix $M[V_c,F_c]$.

\subsection{Organisation of the rest of the paper}

\cref{sec:lower-bound,sec:basic-lemmas,sec:lampshade,sec:upper-bound,sect-cone} will be devoted to proving \cref{thm:ball,thm:sphere} on the extension complexity of random polytopes. First, in \cref{sec:lower-bound} we prove the lower bounds. \cref{sec:basic-lemmas} contains some basic lemmas used in the proof of the upper bound. In particular, this section contains some lemmas about spherical caps and about properties of random polytopes. \cref{sec:lampshade} features the key lemma stating that certain (modified versions of) submatrices of the slack matrix have bounded nonnegative rank. This lemma is what is referred to as the ``lampshade argument'' above, since its proof relies on constructing a suitable ``lampshade'' as depicted on the right side of \cref{fig:lampshade}. The actual geometric construction of this ``lampshade'' is deferred to \cref{sect-cone}. Relying on this key lemma, in \cref{sec:upper-bound} we prove the upper bound in \cref{thm:ball,thm:sphere}.

In \cref{sec:preparations-cyclic,sec:cyclic,section-geometry-circle}, we prove \cref{thm:cyclic-polygon}, upper-bounding the extension complexity of cyclic polygons. More specifically, \cref{sec:preparations-cyclic} contains some preparations, while \cref{sec:cyclic} contains the actual proof of \cref{thm:cyclic-polygon} (and \cref{section-geometry-circle} contains the proof of a geometric lemma used in the proof of \cref{thm:cyclic-polygon}).

\section{Lower-bounding the extension complexity of random polytopes}
\label{sec:lower-bound}

We will deduce the lower bounds in \cref{thm:sphere,thm:ball} from the following theorem, which is a generalisation of a result of Fiorini, Rothvo{\ss} and Tiwary~\cite{FRT12} in dimension $d=2$. A very similar (actually slightly stronger) result appeared as \cite[Theorem~3.3]{Shi19b}.

\begin{thm}\label{thm:lower}
Fix $d\in\NN$ and let $P\su \RR^d$ be a $d$-dimensional polytope. Let $L$
be the field extension of $\QQ$ generated by the coordinates of the
vertices of $P$, and let $g$ be the transcendence degree of $L$ over $\QQ$.
Then $\xc(P)\ge \sqrt g$.
\end{thm}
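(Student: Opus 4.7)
The plan is to combine the definition of extension complexity with a parameter count and Cramer's rule to show that $g\leq \xc(P)^2$, which is equivalent to the claim.

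Set $r=\xc(P)$. By definition there exist a dimension $d'$, a polytope $P'\subseteq\RR^{d'}$ with at most $r$ facets, and a linear projection $\pi:\RR^{d'}\to\RR^d$ with $\pi(P')=P$. The first step is to normalise this setup so that the only free parameters live in the inequality description of $P'$. Passing to the affine hull of $P'$, I may assume $P'$ is full-dimensional in $\RR^{d'}$; since a $d'$-dimensional polytope has at least $d'+1$ facets, this forces $d'+1\leq r$. Because $P$ is full-dimensional in $\RR^d$, the map $\pi$ is surjective, so after an invertible linear change of coordinates on $\RR^{d'}$ (and absorbing any translation into the right-hand sides $b_i$ of the inequalities defining $P'$) I may further assume that $\pi$ is the coordinate projection $(x_1,\ldots,x_{d'})\mapsto(x_1,\ldots,x_d)$.

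With $\pi$ now fixed, $P'$ is cut out by at most $r$ inequalities $a_i\cdot x\leq b_i$ in $\RR^{d'}$, involving in total at most $r(d'+1)\leq r^2$ real parameters. Let $K\subseteq\RR$ be the field obtained by adjoining all these parameters to $\QQ$, so that $\operatorname{trdeg}(K/\QQ)\leq r^2$. Each vertex of $P'$ is the unique solution of some linearly independent subsystem of $d'$ of these inequalities held as equalities, so by Cramer's rule its coordinates are rational functions of the $a_i$'s and $b_i$'s and in particular lie in $K$. Now
\[
P=\pi(P')=\conv\bigl(\pi(\{\text{vertices of }P'\})\bigr),
\]
so every vertex of $P$ is the image under $\pi$ of some vertex of $P'$ and hence has coordinates in $K$. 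This gives $L\subseteq K$, whence $g=\operatorname{trdeg}(L/\QQ)\leq\operatorname{trdeg}(K/\QQ)\leq r^2$, and therefore $\xc(P)=r\geq\sqrt{g}$.

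All ingredients are standard; the one place where care is needed is the normalisation step, where I must check that making $\pi$ a coordinate projection via a change of basis on $\RR^{d'}$ (and removing the translation) preserves both the bound $d'+1\leq r$ and the fact that the vertex coordinates of $P$ lie in the field generated by the new inequality coefficients. I expect this to be routine bookkeeping rather than a genuine obstacle.
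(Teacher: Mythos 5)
Your proposal is correct and follows essentially the same approach as the paper: normalise the projection to a coordinate projection, bound the number of parameters in the inequality description by $r(d'+1)\le r^2$, observe via Cramer's rule that vertex coordinates are rational functions of those parameters, and conclude by comparing transcendence degrees. The only cosmetic difference is that you spell out the reduction to a full-dimensional $P'$ and the absorption of the translation, which the paper treats implicitly.
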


We remark that Padrol (see \cite[Theorem 2(2)]{Pad16}) also proved a similar result with a slightly stronger bound that holds almost surely for polytopes $P$ drawn from continuous probability distributions. Padrol's result also implies the lower bound in \cref{thm:ball}.

\begin{proof}[Proof of \cref{thm:lower}]
Let $k=\xc(P)$ be the extension complexity of the polytope $P$. Then $P$ can be obtained as the image of some $d'$-dimensional polytope $Q\su \RR^{d'}$ with $k$ facets under a projection onto a $d$-dimensional subspace of $\RR^{d'}$. After a linear transformation of $\RR^{d'}$, we may assume that this projection is the projection $\RR^{d'}\to \RR^d$ onto the first $d$ coordinates.

Now, the polytope $Q\su \RR^{d'}$ is defined by a system of inequalities $Ax\le b$, where $A\in \RR^{k\times d'}$ and $b\in \RR^{k}$. Since every $d'$-dimensional polytope has at least $d'+1$ facets, we have $k\ge d'+1$. Let $N=k(d'+1)\le k^2$, and denote the entries of $A$ and $b$ by $\beta_1,\dots,\beta_N$ (in any order).

Note that the coordinates of the vertices of $Q$ can be expressed as rational functions of $\beta_1,\dots, \beta_N$, because each vertex is the unique solution of a linear system of equations whose coefficients are among $\beta_1,\dots, \beta_N$ (indeed, these linear equations are given by the equality cases of the constraints $Ax\le b$ which the vertex satisfies). This means that the coordinates of the vertices of $Q$ all lie in the field $\QQ(\beta_1,\dots,\beta_N)$.

Each vertex of $P$ can be obtained as the projection of some vertex of $Q$ (where the projection is onto the first $d$ coordinates). Hence the coordinates of the vertices of $P$ also all lie in the field $\QQ(\beta_1,\dots,\beta_N)$, so $L\su \QQ(\beta_1,\dots,\beta_N)$. It follows that $g=\operatorname{trdeg}(L/\QQ)\le \operatorname{trdeg}(\QQ(\beta_1,\dots,\beta_N)/\QQ) \le N\le k^2$, and $\xc(P)=k\ge \sqrt g$.
\end{proof}

It is not hard to deduce the lower bounds on the extension complexity in \cref{thm:sphere} and \cref{thm:ball} from \cref{thm:lower}. Indeed, note that in the setting of \cref{thm:ball}, with probability $1$ the coordinates of all the vertices of $P$ are algebraically independent. Furthermore, the number of vertices of $P$ is a.a.s.\ of the form $\Theta(m^{(d-1)/(d+1)})=\Theta(n)$ (see for example \cite{Rei05}). Thus, when applying \cref{thm:lower} to $P$ we a.a.s.\ have $g=d\cdot \Theta(n)=\Theta(n)$ and obtain $\xc(P)\geq \Theta(\sqrt{n})$.

In the setting of \cref{thm:sphere}, with probability $1$ we have $g=(d-1)n$ when applying \cref{thm:lower} to the $n$-vertex polytope $P$ and obtain $\xc(P)\geq \sqrt{(d-1)n}$ (note that the $d$ coordinates of each vertex of $P$ satisfy the algebraic relation $x_1^2+\dots+x_d^2=1$, but taking $d-1$ of the $d$ coordinates for each of the $n$ vertices gives with probability $1$ a transcendence basis of size $(d-1)n$).

\section{Basic lemmas for the random polytope upper bound}
\label{sec:basic-lemmas}

In this section we collect a number of basic facts about spherical caps and the vertex and facet distribution of random polytopes. These lemmas will be used later in the proofs of the upper bounds in \cref{thm:sphere,thm:ball}.

\subsection{Spherical caps}
As before, let $B\su \RR^d$ be the closed unit ball centred at the origin, and let $S\su \RR^d$ be the $(d-1)$-dimensional unit sphere (the boundary of $B$). For any two points $x,y\in S$ the spherical distance between $x$ and $y$ is the length of the shortest arc on the sphere $S$ connecting $x$ and $y$. Note that this length is equal to the angle between the points $x$ and $y$, measured from
the origin. In particular, the spherical distance between a pair of points $x,y\in S$ is at least $0$ and at most $\pi$.

Given a point $p\in S$, and $0<\eps<\pi$, the \emph{spherical cap} with radius $\eps$ centred at $p$ is the subset $X\su S$ of all points on $S$ with spherical distance at most $\eps$ from $p$. We call the convex hull $\conv(X)\su B$ of this subset $X\su S$ the \emph{solid cap} with radius $\eps$ centred at $p$. The point $p$ is called the \emph{centre} of this spherical cap and of the corresponding solid cap. Note that by definition the centre of any spherical or solid cap is always a point on the unit sphere $S$.

\begin{fact}
\label{fact:volume}
Fix $d\geq 2$. Then the surface area of a spherical cap of radius $\eps$ is $\Theta(\eps^{d-1})$, and the volume of a solid cap of radius $\eps$ is $\Theta(\eps^{d+1})$.
\end{fact}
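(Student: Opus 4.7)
The plan is a direct computation in spherical coordinates followed by elementary estimates on $\sin\theta$.

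By rotational symmetry I may take the cap to be centred at the north pole $p = (0,\dots,0,1)$. Parameterising $S$ by a polar angle $\theta \in [0,\pi]$ (the angle from $p$) together with an azimuthal coordinate on $S^{d-2}$, the surface-area element on $S$ is $\sin^{d-2}\theta\, d\theta\, dA_{d-2}$, where $dA_{d-2}$ is the standard surface-area measure on $S^{d-2}$. Integrating over the cap yields
\[
\operatorname{area}(X) = \omega_{d-2} \int_0^{\eps} \sin^{d-2}\theta\, d\theta,
\]
where $\omega_{d-2}$ is the surface area of $S^{d-2}$, a positive constant depending only on $d$.

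For the solid cap, a standard convexity argument (its extreme points all lie on $S$, hence in $X$) shows $\conv(X) = B \cap \{y : y\cdot p \ge \cos\eps\}$. Slicing this set by hyperplanes $y\cdot p = z$ for $z \in [\cos\eps,1]$ gives $(d-1)$-dimensional balls of radius $\sqrt{1-z^2}$; their $(d-1)$-volumes are $v_{d-1}(1-z^2)^{(d-1)/2}$ for a constant $v_{d-1}$ depending only on $d$. Substituting $z = \cos\theta$, so that $1-z^2 = \sin^2\theta$ and $dz = -\sin\theta\, d\theta$, gives
\[
\Vol(\conv(X)) = v_{d-1}\int_0^{\eps}\sin^{d}\theta\, d\theta.
\]

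It therefore suffices to prove $\int_0^{\eps}\sin^k\theta\, d\theta = \Theta(\eps^{k+1})$ for fixed $k \ge 0$ uniformly over $\eps \in (0,\pi)$. The upper bound is immediate from $\sin\theta \le \theta$. For the lower bound I would split on whether $\eps \le \pi/2$: in that range, $\sin\theta \ge (2/\pi)\theta$ gives $\int_0^{\eps}\sin^k\theta\, d\theta \ge (2/\pi)^k \eps^{k+1}/(k+1)$, whereas for $\eps \in (\pi/2,\pi)$ the integral is at least its value at $\eps = \pi/2$ (a positive constant depending only on $d$), and $\eps^{k+1} \le \pi^{k+1}$, so the ratio is again bounded. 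There is no genuine obstacle here: the calculation is entirely standard, with the only bookkeeping wrinkle being that $\sin\theta$ is not monotone on $[0,\pi]$, which forces the case split near $\eps = \pi/2$.
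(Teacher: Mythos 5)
Your proof is correct, and it is precisely the ``elementary geometric estimates'' route the paper alludes to (the paper gives no proof, only a citation to a paper of Li for exact formulas and the remark that a direct computation works). The reduction of both quantities to integrals of the form $\int_0^{\eps}\sin^k\theta\,d\theta$ is exactly the right normal form, your identification of $\conv(X)$ with $B\cap\{y:y\cdot p\ge\cos\eps\}$ via extreme points is sound, and the two-sided estimate $\int_0^{\eps}\sin^k\theta\,d\theta=\Theta(\eps^{k+1})$ (with the case split at $\eps=\pi/2$ to handle the non-monotonicity of $\sin$ on $(0,\pi)$) gives constants that are uniform over the full range $\eps\in(0,\pi)$, which is what the $\Theta$ in the statement requires.
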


\cref{fact:volume} can be deduced from exact formulas for the surface areas and volumes of caps (see for example \cite{Li11}), or can be computed directly by some elementary geometric estimates.

Note that any hyperplane $H\su \RR^d$ intersecting the interior of the unit ball $B$ cuts the ball $B$ into two solid caps, one on either side of $H$ (where we consider the intersection $H\cap B$ to be part of both of these solid caps). The centres of these two solid caps are the two intersection points of the sphere $S$ with the line orthogonal to $H$ through the origin (the centre of the ball $B$).

We will need the following basic packing and covering lemma for caps on the sphere.

\begin{lem}
\label{lem:covering-packing}Fix $d\geq 2$, and consider the $d$-dimensional unit ball $B\su \RR^d$ and the $(d-1)$-dimensional
unit sphere $S\subseteq\RR^{d}$. Then for any $0<\eps<\pi/50$ we can find a collection $A\subseteq S$ of $O(\eps^{1-d})$ points on the sphere, with the following properties.
\begin{enumerate}
    \item[(I)] Every pair of points in $A$ is separated by a spherical distance of at least $\eps/2$.
    \item[(II)] For any $p\in S$, there are $O(1)$ points in $A$ within spherical distance $30\varepsilon$ of $p$.
    \item[(III)] Every solid cap of radius $\eps/2$ in the ball $B$ is fully contained in a solid cap of radius $\eps$ centred at some point $a\in A$.
\end{enumerate}
\end{lem}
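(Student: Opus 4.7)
The plan is to let $A$ be a \emph{maximal} subset of $S$ with the property that any two of its points have spherical distance at least $\eps/2$; such a set can be constructed greedily (or via Zorn's lemma, using that $S$ is compact). Property (I) then holds immediately by construction. For Property (III), consider any solid cap of radius $\eps/2$ centred at some point $p\in S$. By maximality of $A$, there exists $a\in A$ with spherical distance at most $\eps/2$ from $p$ (otherwise $p$ could be adjoined to $A$). The triangle inequality for spherical distance then shows that the spherical cap of radius $\eps/2$ around $p$ is contained in the spherical cap of radius $\eps$ around $a$, and taking convex hulls gives (III).

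For the cardinality bound and Property (II), I would run a standard volume-packing argument. By (I) and the triangle inequality, the spherical caps of radius $\eps/4$ around the points of $A$ are pairwise disjoint subsets of $S$. By \cref{fact:volume}, each has area $\Theta(\eps^{d-1})$, and the total surface area of $S$ is a constant depending only on $d$, so $|A|=O(\eps^{1-d})$. For Property (II), fix $p\in S$ and let $A_p$ be the set of points of $A$ within spherical distance $30\eps$ of $p$. The disjoint spherical caps of radius $\eps/4$ around the points of $A_p$ are all contained in the spherical cap of radius $30\eps+\eps/4<31\eps$ around $p$, which by \cref{fact:volume} has area $\Theta(\eps^{d-1})$. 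Dividing yields $|A_p|=O(1)$.

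The assumption $\eps<\pi/50$ is used precisely to ensure that the radii $\eps/4$ and $31\eps$ appearing above are less than $\pi$, so that \cref{fact:volume} applies to the caps in question. I don't anticipate any real obstacle here: the lemma is a standard maximal-packing statement on the sphere, and the only mildly delicate point is bookkeeping the constants so that the triangle-inequality containment in (III) uses exactly the factor-of-two relationship between the packing radius $\eps/2$ and the covering radius $\eps$.
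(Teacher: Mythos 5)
Your proposal is essentially the same as the paper's proof: take $A$ maximal with respect to property (I), bound $|A|$ and verify (II) by packing small disjoint spherical caps inside a larger cap of area $\Theta(\eps^{d-1})$ via \cref{fact:volume}, and deduce (III) from maximality and the triangle inequality. The only cosmetic difference is your use of packing caps of radius $\eps/4$ where the paper uses $\eps/8$ (the latter sidesteps the boundary-touching case when two points are at distance exactly $\eps/2$, but either works since this has measure zero).
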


\begin{proof}
Let $A$ be any maximal collection of points on the sphere $S$ satisfying (I). We claim that all the other properties are automatically satisfied.

First, note that all spherical caps of radius $\eps/8$ centred at points in $A$ are disjoint from each other. Since each of these spherical caps has surface area $\Theta(\eps^{d-1})$, we obtain that $\vert A\vert\leq O(\eps^{1-d})$.

For (II), consider any point $p\in S$ and let $Z\subseteq A$ be the set of all points in $A$ within spherical distance $30\eps$ of $p$. Then the spherical caps of radius $\eps/8$ centred at all points $z\in Z$ are all disjoint and have a total surface area of $\Theta(|Z| \eps^{d-1})$. Also, all these caps are contained in the spherical cap of radius $31\eps$ centred at $p$, which has surface area $\Theta(\eps^{d-1})$. It follows that $|Z|=O(1)$.

For (III), consider any solid cap of radius $\eps/2$ centred at some point $p\in S$. Let $C\su S$ be the corresponding spherical cap. By maximality of $A$, there is some $a\in A$ within spherical distance $\eps/2$ of $p$, so $C$ is fully contained in the spherical cap of radius $\eps$ centred at $a$. Hence the original solid cap of radius $\eps/2$ centred at $p$ is contained in the solid cap of radius $\eps$ centred at $a$.
\end{proof}

\begin{lem}\label{lem:colouring-point-collection}
Fix $d\geq 2$. Let $0<\eps<\pi/50$ and let $A\subseteq S$ be a collection of points on the $(d-1)$-dimensional unit sphere $S$ with the properties in \cref{lem:covering-packing}. Then we can colour the points in $A$ with $O(1)$ colours such that any two points of the same colour have spherical distance at least $30\eps$.
\end{lem}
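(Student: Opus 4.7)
The plan is to construct the conflict graph and observe that it has bounded maximum degree, so a greedy colouring works.

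More specifically, let $G$ be the graph on vertex set $A$ where two distinct points $a,a'\in A$ are joined by an edge if and only if their spherical distance is less than $30\eps$. A proper colouring of $G$ is exactly a colouring of $A$ with the property we want: any two points receiving the same colour have spherical distance at least $30\eps$.

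By property (II) from \cref{lem:covering-packing}, for every $p\in S$ there are only $O(1)$ points of $A$ within spherical distance $30\eps$ of $p$. Applying this with $p=a$ for each $a\in A$, we see that every vertex of $G$ has degree $O(1)$. In particular, the maximum degree of $G$ is bounded by an absolute constant (depending only on $d$).

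Since a graph of maximum degree $\Delta$ can be greedily coloured with $\Delta+1$ colours, $G$ admits a proper colouring using $O(1)$ colours. This yields the desired colouring of $A$, completing the proof. The argument is essentially immediate from property (II), and there is no serious obstacle.
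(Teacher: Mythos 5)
Your proof is correct and matches the paper's argument essentially verbatim: build the conflict graph on $A$ with edges between nearby points, note it has bounded maximum degree by property (II), and greedily colour. The only cosmetic difference is that you use the strict inequality ``distance $<30\eps$'' for edges, which if anything is slightly more careful than the paper's wording and changes nothing.
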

\begin{proof}
Let us consider an auxiliary graph $G$ with vertex set $A$, where there is an edge between two elements of $A$ if they are within spherical distance
$30\eps$ from each other. By property (II) of \cref{lem:covering-packing}, the graph $G$ has maximum degree $O(1)$. We can therefore greedily colour the points in $A$ as desired.
\end{proof}

\subsection{Random polytopes}

Next, we prove some lemmas about the way facets and vertices are typically distributed in random polytopes, in the settings of \cref{thm:sphere,thm:ball}. First, we need to know the typical number of vertices in the setting of \cref{thm:ball}. The expected number of vertices is a classical result (in two dimensions this was computed
by R\'enyi and Sulanke~\cite{RS63} in their foundational paper
on random polygons, and in higher dimensions it seems to have been
first computed by Raynaud~\cite{Ray70}). Concentration results are more recent; the following theorem is a special case of, for example, \cite[Theorem~2.11]{Vu05} or \cite[Theorem~5]{Rei05}.

\begin{thm}
\label{thm:number-vertices-ball}Fix $d\geq 2$ and let $P$ be the convex
hull of $m$ random points in the 
unit ball $B\subseteq\RR^{d}$. Then, a.a.s.\ $P$ has $\Theta(m^{(d-1)/(d+1)})$ vertices.
\end{thm}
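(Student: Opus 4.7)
The plan is to handle the expectation and the concentration separately: first I would compute $\E[V_m]$ up to constants via Efron's identity and a cap integral, and then I would deduce the a.a.s.\ statement from a variance bound combined with Chebyshev's inequality.

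For the expectation, write $V_m$ for the number of vertices of $P=\conv(X_1,\dots,X_m)$ and let $K_{m-1}=\conv(X_2,\dots,X_m)$. Efron's identity (an immediate consequence of symmetry and the observation that $X_1$ is a vertex of $P$ iff $X_1\notin K_{m-1}$) gives
\[
\E[V_m]\;=\;m\cdot\Pr(X_1\notin K_{m-1})\;=\;\frac{m}{\Vol(B)}\,\E\bigl[\Vol(B\setminus K_{m-1})\bigr].
\]
By Fubini this reduces to estimating $\Pr(y\notin K_{m-1})$ for each $y\in B$. For $y$ with $|y|=1-t$, an elementary cross-section computation shows that the cap $C_y$ cut off by the hyperplane through $y$ orthogonal to $y$ has volume $\Theta(t^{(d+1)/2})$. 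Since $y\notin K_{m-1}$ whenever none of $X_2,\dots,X_m$ lies in $C_y$, and since conversely any separation of $y$ from $K_{m-1}$ forces an empty cap of comparable volume, one obtains
\[
\Pr(y\notin K_{m-1})\;=\;\bigl(1-\Theta(t^{(d+1)/2})\bigr)^{m-1}\;\text{up to constants.}
\]
Integrating in polar coordinates and substituting $t=s\,m^{-2/(d+1)}$ localises the mass to a thin annulus of width $\Theta(m^{-2/(d+1)})$, yielding $\E[\Vol(B\setminus K_{m-1})]=\Theta(m^{-2/(d+1)})$ and hence $\E[V_m]=\Theta(m^{(d-1)/(d+1)})$.

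For concentration I would aim to show $\Var(V_m)=O(\E[V_m])$, which via Chebyshev upgrades the expectation estimate to the claimed a.a.s.\ statement. The natural tool is the Efron--Stein inequality
\[
\Var(V_m)\;\le\;\tfrac{1}{2}\sum_{i=1}^m \E\bigl[(V_m-V_m^{(i)})^2\bigr],
\]
where $V_m^{(i)}$ is the vertex count after resampling $X_i$. Heuristically, only vertices close to $X_i$ (those whose defining cap overlaps the cap associated with $X_i$) can be affected by resampling, and the expected number of such vertices is $O(1)$ by the same cap-integral localisation used above. A careful accounting of these local effects gives $\E[(V_m-V_m^{(i)})^2]=O(m^{-2/(d+1)})$ per index, summing to $O(m^{(d-1)/(d+1)})=O(\E[V_m])$, as required.

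The main obstacle is the variance bound: in the worst case, inserting or removing a single point can flip the status of many vertices at once, so one cannot simply use bounded differences. The required control comes from showing that such catastrophic changes are rare in the probabilistic sense, using a ``stabilisation'' argument tying the influence of $X_i$ to the geometry of the local cap of volume $\Theta(1/m)$ around it, after which the same change-of-variables that powered the expectation computation delivers the needed tail bound. This is the step where one really leans on the detailed probabilistic analysis of random polytopes (e.g.\ as carried out by Reitzner~\cite{Rei05} or Vu~\cite{Vu05}); the expectation half of the argument is much more routine.
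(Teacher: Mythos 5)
The paper does not prove this theorem at all: it is stated as a quotation from the literature and is attributed directly to Vu~\cite{Vu05} (Theorem~2.11) and Reitzner~\cite{Rei05} (Theorem~5). So there is no in-paper proof to compare against; what you have written is a reconstruction from first principles, and it is essentially correct and follows the same path as the cited works. Efron's identity for the expectation followed by an Efron--Stein variance bound and Chebyshev is exactly the strategy of Reitzner~\cite{Rei05}, and the per-coordinate bound $\E[(V_m-V_m^{(i)})^2]=O(m^{-2/(d+1)})$ you need is precisely what the paper itself extracts from Reitzner's calculations as \cref{lem:ES-term}. Combined with the symmetrisation trick $\E[(V_m-V_m^{(i)})^2]\le 4\,\E[(V_m-V_{m-1})^2]$ (which the paper spells out in the proof of \cref{lem:ball-distributed}), this gives $\Var(V_m)=O(m^{(d-1)/(d+1)})=O(\E V_m)$, and since $\E V_m\to\infty$, Chebyshev indeed delivers the a.a.s.\ $\Theta$. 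Note that \cref{lem:ball-distributed} in the paper uses the stronger exponential form (\cref{lem:BLM}) of the same Efron--Stein input because it needs $n^{-\omega(1)}$ failure probability; for the present statement second-moment concentration is enough, so your weaker choice is adequate.

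One step in your expectation calculation is glossed over. The lower bound $\Pr(y\notin K_{m-1})\ge(1-\Vol(C_y)/\Vol(B))^{m-1}$ is immediate, but your ``conversely'' clause --- that any separation of $y$ forces an empty cap of volume comparable to $\Vol(C_y)$ and therefore $\Pr(y\notin K_{m-1})\le\bigl(1-\Theta(\Vol(C_y))\bigr)^{m-1}$ --- is not justified as written. The minimality of the orthogonal cap among caps through $y$ tells you that \emph{some} cap of volume $\ge\Vol(C_y)$ through $y$ is empty, but that event is a union over uncountably many directions and cannot be priced at the probability of a single fixed cap being empty. To make this rigorous one covers the family of caps through $y$ by $O_d(1)$ caps of comparable volume (an economic cap covering / $\eps$-net of directions) and applies a union bound, which costs only a constant factor; alternatively one bounds $\E\Vol(B\setminus K_{m-1})$ directly by the wet-part/floating-body estimates for the ball. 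This is a standard but genuine extra step, and without it the upper-bound half of $\E V_m=\Theta(m^{(d-1)/(d+1)})$ is not actually proved.
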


Next, we need the fact that in the settings of both \cref{thm:sphere,thm:ball}, all facets are ``inside'' small caps, for a suitable notion of ``inside'' which we now define.

\begin{defn}
For a polytope $P\su B$, we say that a facet $f$ of $P$ is \emph{encapsulated} by a solid cap $C$ if, writing $H_f$ for the hyperplane containing $f$, we have $H_f\cap B\su C$.
\end{defn}

\begin{lem}
\label{lem:cap-diameter-sphere}Fix $d\geq 2$ and let $P$ be the convex
hull of $n$ random points on the $(d-1)$-dimensional
unit sphere $S\subseteq\RR^{d}$. Then, with probability $1-n^{-\omega(1)}$, each facet of $P$ is encapsulated by some solid cap of radius $n^{-1/(d-1)}\log^2 n$.
\end{lem}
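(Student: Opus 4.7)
The plan is to show that a.a.s.\ every open spherical cap of radius $\eps:=n^{-1/(d-1)}\log^2 n$ on $S$ contains at least one of the $n$ random points. Granting this, consider any facet $f$ of $P$ with supporting hyperplane $H_f$: since $f$ is a facet, all other random points lie strictly on one side of $H_f$, so the interior of one of the two solid caps cut off by $H_f$ contains no random point. A short geometric check (optimising the radius of a solid cap needed to contain $H_f\cap B$ over possible centres $p\in S$) shows that the minimal solid cap encapsulating $f$ is exactly the smaller of the two solid caps cut by $H_f$. For large $n$ we have $\eps<\pi/2$, so this smaller cap (of radius at most $\pi/2$) is forced to be the one whose interior is empty, and hence has radius less than $\eps$; it follows that $f$ is encapsulated by a solid cap of radius less than $\eps$.

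To establish the ``no large empty open cap'' property, I would take a net $\mathcal{N}\su S$ such that every point of $S$ lies within spherical distance $\eps/4$ of some point of $\mathcal{N}$. The same packing argument as in \cref{lem:covering-packing} provides such a net of size $|\mathcal{N}|=O(\eps^{1-d})=O(n/\log^{2(d-1)}n)$. Every open spherical cap of radius $\eps$ centred at an arbitrary point of $S$ contains the closed spherical cap of radius $\eps/2$ centred at some net point, so it suffices to show that a.a.s., each of the closed $(\eps/2)$-caps around the net points contains at least one random point.

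For each fixed $p\in\mathcal{N}$, \cref{fact:volume} gives that the spherical cap of radius $\eps/2$ around $p$ has surface area $\Theta(\eps^{d-1})=\Theta(\log^{2(d-1)}n/n)$, so the probability that none of the $n$ random points lies in it is at most $(1-\Theta(\eps^{d-1}))^n\le\exp(-\Theta(\log^{2(d-1)}n))=n^{-\omega(1)}$. Since $|\mathcal{N}|=O(n)$, a union bound shows that the bad event of some $(\eps/2)$-cap around a net point being empty occurs with probability $o(1)$, completing the argument. The only mildly delicate point is the distinction between open and closed caps (the vertices of $f$ lie on the boundary, not in the open interior, of the empty cap), but the slack between the radii $\eps$, $\eps/2$ and $\eps/4$ in the net argument absorbs this easily. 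There is no real obstacle here; the proof essentially comes down to the fact that the factor $\log^2 n$ in the definition of $\eps$ is chosen precisely so that $n\eps^{d-1}=\log^{2(d-1)}n$ grows fast enough for the per-cap failure probability to beat the net size in the union bound.
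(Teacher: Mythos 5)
Your proof is correct, and it takes a genuinely different route from the paper. The paper conditions on each $d$-subset of the random points: for each such subset $I$, it bounds the probability that the points in $I$ form a facet whose supporting hyperplane cuts a cap of radius $\geq\eps$ with no further point landing in it, and then union-bounds over all $\binom{n}{d}$ subsets. You instead prove a stronger deterministic property of the point set, namely that a.a.s.\ \emph{every} open spherical $\eps$-cap contains a sample point, via a net of size $O(\eps^{1-d})=O(n)$ and a per-cap failure probability of $n^{-\omega(1)}$ (here the choice $\eps=n^{-1/(d-1)}\log^2 n$ makes $n\eps^{d-1}=\log^{2(d-1)}n$ beat the polynomial union bound, exactly as you note). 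You then observe that the minimal solid cap encapsulating a facet $f$ is the smaller of the two caps cut by $H_f$ (this reduces to maximising $\min_{x\in H_f\cap S}x\cdot p$ over $p\in S$, which is attained at the foot of the normal from the origin), that for $\eps<\pi/2$ the larger side cannot be the empty side (else the $\eps$-cap around the antipodal point would be empty), and hence that the empty smaller side has radius $<\eps$. Your approach decouples the probabilistic estimate from the geometry of facets: the ``no empty $\eps$-cap'' property is a clean covering statement and the deduction is purely deterministic, whereas the paper's bookkeeping is organised around candidate facets. Both work, and both extend to the ball setting (replacing surface-area estimates by volume estimates of solid caps); the paper's phrasing was presumably chosen because it treats the sphere and ball lemmas with literally identical wording, but yours is arguably the more conceptual argument for the sphere case and in fact yields the slightly stronger conclusion that there is no empty $\eps$-cap at all, not just none adjacent to a facet.
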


\begin{lem}
\label{lem:cap-diameter-ball}Fix $d\geq 2$, let $P$ be the convex
hull of $m$ random points in the 
unit ball $B\subseteq\RR^{d}$, and let $n=m^{(d-1)/(d+1)}$. Then, with probability $1-n^{-\omega(1)}$, each facet of $P$ is encapsulated by some solid cap of radius $n^{-1/(d-1)}\log^2 n$.
\end{lem}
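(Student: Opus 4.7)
The plan is a standard covering argument calibrated by $\varepsilon := n^{-1/(d-1)}\log^2 n$, which by the relation $n = m^{(d-1)/(d+1)}$ is the same as $\varepsilon = m^{-1/(d+1)}\log^2 n$. For each facet $f$ of $P$, the hyperplane $H_f$ is a supporting hyperplane, so $P$ lies in one closed halfspace bounded by $H_f$, and $B$ intersected with the \emph{other} closed halfspace is a solid cap $C_f$ (the ``outer cap'' of $f$). With probability $1$, the only random points lying on $H_f$ are the $d$ vertices of $f$ itself, so no random point lies in the open halfspace on the empty side of $H_f$. Since $C_f$ already encapsulates $f$ by definition, it suffices to show that a.a.s.\ every such $C_f$ has angular radius at most $\varepsilon$.

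To that end, I would apply \cref{lem:covering-packing} with parameter $\varepsilon$ to obtain $A \su S$ with $|A| = O(\varepsilon^{1-d})$; property (III) implies in particular that every $p \in S$ lies within spherical distance $\varepsilon/2$ of some $a \in A$. For each $a \in A$, let $D_a$ be the solid cap of angular radius $\varepsilon/2$ centred at $a$. By \cref{fact:volume}, $\Vol(D_a) = \Theta(\varepsilon^{d+1})$, so a single uniform random point in $B$ lands in $D_a$ with probability $\Theta(\varepsilon^{d+1})$. Hence the probability that $D_a$ contains none of the $m$ random points is at most
\[
\exp\bigl(-\Omega(m\varepsilon^{d+1})\bigr) = \exp\bigl(-\Omega(\log^{2(d+1)} n)\bigr) = n^{-\omega(1)}.
\]
Since $|A| = O(\varepsilon^{1-d}) = O(n/\log^{2(d-1)} n)$ is polynomial in $n$, a union bound yields that a.a.s.\ every $D_a$ contains at least one random point.

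On this high-probability event, suppose for contradiction that some facet $f$ has outer cap $C_f$ of angular radius $\varepsilon_0 > \varepsilon$, centred at some $p \in S$. Pick $a \in A$ at spherical distance at most $\varepsilon/2$ from $p$. By the triangle inequality on $S$, every point of the spherical cap underlying $D_a$ sits at spherical distance at most $\varepsilon < \varepsilon_0$ from $p$; taking convex combinations, every point of $D_a$ has Euclidean inner product with $p$ strictly greater than $\cos\varepsilon_0$, i.e.\ lies \emph{strictly} in the open empty side of $H_f$. But some random point lies in $D_a$, so some random point lies strictly in the open empty side of $H_f$, contradicting the fact that no random point does. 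Hence a.a.s.\ every facet of $P$ is encapsulated by its outer cap, which has angular radius at most $\varepsilon = n^{-1/(d-1)}\log^2 n$, as required.

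The only real subtlety is this last geometric step: one needs the containment $D_a \subseteq C_f$ to be strict enough to push a random point into the \emph{open} empty halfspace of $H_f$, not just into its closure. The $\varepsilon/2$ slack in the covering property, together with the strict inequality $\varepsilon_0 > \varepsilon$, delivers exactly this and sidesteps any worry about the $d$ vertices of $f$ themselves lying on $H_f$. Everything else is routine: a volume estimate from \cref{fact:volume} and a union bound calibrated so that the $\log^2 n$ factor in $\varepsilon$ comfortably absorbs both the polynomial cardinality of $A$ and the tail bound.
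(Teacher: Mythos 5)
Your proof is correct and takes a genuinely different route from the paper's. The paper enumerates $d$-element subsets $I$ of the random points: conditioning on $(p_i)_{i\in I}$, if the hyperplane $H$ they span cuts $B$ into two caps both of radius at least $n^{-1/(d-1)}\log^2 n$ then, since each such cap has volume $\Omega(m^{-1}\log^{2(d+1)} n)$, the probability that all remaining $m-d$ points fall on one side of $H$ is $n^{-\omega(1)}$; a union bound over the $\Theta(m^d)$ subsets finishes the proof. You instead isolate all of the probability into a single covering step: a.a.s.\ every one of the $O\bigl(n/\log^{2(d-1)} n\bigr)$ small solid caps $D_a$ captures at least one random point, and once this event holds the conclusion is deterministic, since a facet whose outer cap has radius exceeding $\varepsilon$ would strictly swallow some $D_a$ and so push a random point onto the forbidden open side of its supporting hyperplane. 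The volume bookkeeping is identical in the two proofs, but you trade a union bound over $\Theta(m^d)$ candidate facets for one over the much smaller net $A$, which is both quantitatively cheaper and, arguably, more conceptual: it makes explicit that the lemma is really a consequence of the random points being $\varepsilon$-dense near the boundary of $B$. A minor stylistic remark is that you only need property (III) of \cref{lem:covering-packing} (in fact just the existence of an $\varepsilon/2$-net of $S$ of the right cardinality), so the appeal to that lemma is heavier machinery than required; this does not affect correctness.
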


The statements of \cref{lem:cap-diameter-sphere,lem:cap-diameter-ball} can be interpreted as saying that random polytopes in the unit ball are quite close to ``filling out'' the whole ball. There are a large number of related results in the literature: for example, \cref{lem:cap-diameter-ball} can be deduced from a result by B\'ar\'any and Dalla~\cite{BD97}. However, we believe it is simplest to provide a simple self-contained and unified proof of \cref{lem:cap-diameter-sphere,lem:cap-diameter-ball}.

\begin{proof}[Proof of \cref{lem:cap-diameter-sphere,lem:cap-diameter-ball}]
Let us enumerate the random points as $p_1, p_2,\dots, p_m$ in the order they were chosen (where $m=n$ in the setting of \cref{lem:cap-diameter-sphere}). Note that with probability one, no $d+1$ of the points $p_1, p_2,\dots, p_m$ lie on a common hyperplane, meaning our polytope $P=\conv(p_1,\dots,p_m)$ is simplicial (all of its facets have exactly $d$ vertices). For each subset $I\su [m]$ of size $\vert I\vert =d$, let $E_I$ be the event that $\conv(p_i\,|\,i\in I)$ is a facet of $P$ which is not encapsulated by a solid cap of radius $n^{-1/(d-1)}\log^2 n$. It suffices to show that $\Pr(E_I)\le n^{-\omega(1)}$ for each $I$; we may then take the union bound over all $\binom{m}{d}\leq m^d\leq n^{d(d+1)/(d-1)}$ different sets $I$.

We may assume without loss of generality that $I=\lbrace 1,\dots,d\rbrace$. Let us condition on any outcome of the random points $p_1,\dots,p_d$; what we will actually show is the stronger fact that $\Pr(E_I\,|\,p_1,\dots,p_d)\le n^{-\omega(1)}$. The hyperplane $H$ through the points $p_1,\dots,p_d$ cuts the ball $B$ into two solid caps $C$ and $C'$. If one of these two solid caps has radius at most $n^{-1/(d-1)}\log^2 n$, then trivially $\Pr(E_I\,|\,p_1,\dots,p_d)=0$ (because if $p_1,\dots,p_d$ form a facet of $P$, then this facet is encapsulated by both of the solid caps $C$ and $C'$).

So let us now consider the case that both of the solid caps $C$ and $C'$ have radius at least $n^{-1/(d-1)}\log^2 n$. If $p_1,\dots,p_d$ form a facet of $P$, then all the remaining points $p_{d+1},\dots,p_m$ must lie on the same side of the hyperplane $H$ through $p_1,\dots,p_d$. This means that one of the solid caps $C$ or $C'$ must contain all of the points $p_{d+1},\dots,p_m$, while the other one contains none of them.

Each of the points $p_{d+1},\dots,p_m$ lies in the solid cap $C$ with probability at least $\Omega(m^{-1}\log^2 n)$. Indeed, in the setting of \cref{lem:cap-diameter-sphere} the surface area of $C$ is at least $\Omega(n^{-1}\log^{2(d-1)} n)=\Omega(m^{-1}\log^{2(d-1)} n)$. In the setting of \cref{lem:cap-diameter-ball} the volume of $C$ is at least $\Omega(n^{-(d+1)/(d-1)}\log^{2(d+1)} n)=\Omega(m^{-1}\log^{2(d+1)} n)$. Thus, the probability that none of the points $p_{d+1},\dots,p_m$ lies in $C$ is of the form $\left(1-\Omega(m^{-1}\log^2 n)\right)^{m-d}\leq \exp(-\Omega(\log^2 n))\le n^{-\omega(1)}$. Similarly, the probability that the cap $C'$ contains none of the points $p_{d+1},\dots,p_m$ is $n^{-\omega(1)}$. This proves that $\Pr(E_I\,|\,p_1,\dots,p_d)\le n^{-\omega(1)}$, as desired.
\end{proof}

Next, we will prove that the vertices of random polytopes are quite well-distributed, not being too ``clustered'' in any small cap. Again, this is true in the settings of both \cref{thm:sphere,thm:ball}.

\begin{lem}
\label{lem:sphere-distributed}Fix $d\geq 2$, let $n\in \NN$, and let $C$ be a solid cap with radius $\varepsilon\geq n^{-1/(d-1)}\log^2 n$ in the unit ball $B\su \RR^d$. Let $P$ be the convex hull of $n$ random points on the $(d-1)$-dimensional unit sphere $S\subseteq\RR^{d}$. Then, with probability $1-n^{-\omega(1)}$, the solid cap $C$ contains at most $O(\varepsilon^{d-1} n)$ vertices of the polytope $P$.
\end{lem}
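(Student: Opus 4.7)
The plan is to reduce the statement to a routine Chernoff bound on a binomial random variable. Let $p_1, \dots, p_n$ be the $n$ random points on $S$ whose convex hull is $P$. With probability $1$, no $d+1$ of them lie on a common hyperplane, so every one of them is a vertex of $P$; consequently, the number of vertices of $P$ inside $C$ equals the number of points $p_i$ inside $C$. Moreover, since $C$ is a solid cap of the unit ball $B$, its intersection with the unit sphere $S$ is exactly the spherical cap $C \cap S$ of radius $\eps$ (every point of $S$ in $C$ must be on the curved part of $\partial C$). So I only need to control the count $N = |\{i : p_i \in C \cap S\}|$.

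First, I would compute the probability that a single uniformly random point on $S$ lies in $C \cap S$. By \cref{fact:volume}, the spherical cap $C \cap S$ has surface area $\Theta(\eps^{d-1})$, while the total surface area of $S$ is $\Theta(1)$, so each $p_i$ lies in $C$ independently with probability $q = \Theta(\eps^{d-1})$. Hence $N \sim \mathrm{Bin}(n, q)$ with mean $\mu = nq = \Theta(\eps^{d-1} n)$.

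Next, I would invoke a standard Chernoff bound (for instance, $\Pr[\mathrm{Bin}(n,q) \geq 2\mu] \leq \exp(-\mu/3)$). The hypothesis $\eps \geq n^{-1/(d-1)} \log^2 n$ gives
\[
\mu = \Theta(\eps^{d-1} n) \geq \Omega(\log^{2(d-1)} n) \geq \Omega(\log^2 n),
\]
where the last inequality uses $d \geq 2$. Thus
\[
\Pr[N \geq 2\mu] \leq \exp(-\Omega(\log^2 n)) = n^{-\omega(1)}.
\]
Picking the implicit constant in the $O(\eps^{d-1} n)$ of the lemma to absorb the factor of $2$, this gives the desired bound.

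There is no real obstacle here: the argument is just identifying that "vertices of $P$ in $C$" reduces to "random points in the spherical cap $C \cap S$", computing the mean using \cref{fact:volume}, and checking that the hypothesis on $\eps$ makes $\mu$ large enough (namely $\omega(\log n)$) so that the Chernoff tail is $n^{-\omega(1)}$ rather than merely polynomially small. The lower bound $\eps \geq n^{-1/(d-1)} \log^2 n$ is exactly calibrated so that $\mu \geq \log^2 n$ even in the smallest nontrivial dimension $d=2$.
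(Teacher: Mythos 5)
Your proof is correct and is essentially identical to the paper's: identify the vertices of $P$ with the $n$ random points, note via \cref{fact:volume} that each lies in $C$ with probability $\Theta(\eps^{d-1})$, and apply a Chernoff bound using $\eps \geq n^{-1/(d-1)}\log^2 n$ to get $\mu = \Omega(\log^{2(d-1)} n)$ and hence a tail of $n^{-\omega(1)}$. The extra remarks (that with probability $1$ all $n$ points are vertices, and that $C\cap S$ is the spherical cap) are sound clarifications of points the paper leaves implicit.
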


\begin{lem}
\label{lem:ball-distributed} Fix $d\geq 2$, let $m\in \NN$, and define $n=m^{(d-1)/(d+1)}$. Let $C$ be a solid cap with radius $\eps\ge n^{-1/(d-1)}\log^{60} n$ in the unit ball $B\su \RR^d$. Let $P$ be the convex hull of $m$ random points in the ball $B$. Then, with probability $1-n^{-\omega(1)}$, the solid cap $C$ contains at most $O(\varepsilon^{d-1} n)$ vertices of the polytope $P$.
\end{lem}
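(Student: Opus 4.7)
My plan is to control the number of vertices in $C$ by first localizing them to a thin shell near the sphere $S$ and then applying a Chernoff-type concentration bound.

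First, I will show that with probability $1-n^{-\omega(1)}$, every vertex of $P$ lies at depth at most $t_0 := m^{-2/(d+1)}\log n$ from the sphere $S$ (where the depth of $x$ means $1-|x|$). The key input is the standard estimate that for any point $p$ at depth $s$ in $B$, the conditional probability that $p$ is a vertex of $P$ (given that $p$ is one of the $m$ sampled points) is at most $\exp(-\Omega(m s^{(d+1)/2}))$: intuitively, if $p$ is a vertex then some cap of $B$ with $p$ on its base must be empty of the other random points, and the minimum-volume such cap (the ``radial'' cap in direction $p/|p|$) has volume $\Theta(s^{(d+1)/2})$. A covering argument over the space of cap directions handles the dependence on the direction. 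For $s\geq t_0$, this probability is $\exp(-\Omega(\log^{(d+1)/2} n))=n^{-\omega(1)}$, and a union bound over the $m=n^{O(1)}$ random points gives the claim.

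Second, since the hypothesis $\eps\geq n^{-1/(d-1)}\log^3 n$ implies $t_0\ll \eps^2$, the intersection $C\cap K_{t_0}$ (where $K_{t_0}$ denotes the shell $\{x\in B : 1-|x|\leq t_0\}$) is a thin shell of volume $\Theta(\eps^{d-1}t_0)$, occupying the region near the spherical part of $C$. The number of random points in this shell is Binomial with mean $\Theta(m\eps^{d-1}t_0)=\Theta(\eps^{d-1}n\log n)$, which is $\omega(\log n)$ under our assumption. Chernoff therefore gives that the actual count is of this same order with probability $1-n^{-\omega(1)}$. Combined with the first step, this already yields a bound of $O(\eps^{d-1}n\log n)$ on the number of vertices in $C$.

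To remove the extraneous $\log n$ factor and obtain the desired $O(\eps^{d-1}n)$ bound, I will refine via a dyadic decomposition by depth. Let $t_*:=m^{-2/(d+1)}$ and define $L_0:=C\cap K_{t_*}$ and $L_k:=C\cap (K_{2^kt_*}\setminus K_{2^{k-1}t_*})$ for $k=1,2,\ldots,O(\log\log n)$, so that $\bigcup_k L_k = C\cap K_{t_0}$. For the shallow slab $L_0$, the number of points has mean $\Theta(\eps^{d-1}n)$ and is Chernoff-concentrated (the mean is $\omega(\log n)$ by our hypothesis), so the number of vertices there is $O(\eps^{d-1}n)$. For deeper slabs $L_k$ with $k\geq 1$, the per-point vertex probability drops geometrically as $\exp(-\Omega(2^{k(d+1)/2}))$, making the expected vertex count in $L_k$ geometrically decreasing in $k$; the sum of expectations over $k$ is then $O(\eps^{d-1}n)$.

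The main obstacle is converting the expectation bounds in the deep slabs into high-probability bounds, since the expected number of vertices in a deep slab may be much smaller than $\log n$ (so a naive application of Chernoff to the number of points is too weak). My approach would be to work in a Poissonized model (replacing the fixed-size sample with a Poisson point process of comparable intensity), where the number of vertices in any fixed region is a functional of the Poisson process with well-studied concentration properties; one can then transfer the result back to the original model using standard de-Poissonization. Alternatively, one could exploit the near-independence of vertex events at well-separated points together with the negative correlation that arises because two points with overlapping ``empty caps'' cannot both simultaneously be vertices.
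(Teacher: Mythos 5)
Your approach is genuinely different from the paper's. You decompose by depth: first localize all vertices to the shell of depth at most $t_0 = m^{-2/(d+1)}\log n$, then dyadically slice this shell and argue that shallow slabs are controlled by Chernoff on the point count while deep slabs have geometrically small per-point vertex probability. The paper instead considers $Z$, the number of vertices of $P$ inside $K_C = \conv(C\cup\{0\})$, observes $\E Z = \Theta(\eps^{d-1}n)$ from Raynaud's expected-vertex-count formula, and applies a Boucheron--Lugosi--Massart exponential concentration inequality (\cref{lem:BLM}) driven by Efron--Stein second-moment increment bounds imported from Reitzner's work on random polytopes. The increments $\E[(Z-Z^{(i)})^2]$ are controlled by splitting on whether the replaced point lands in a slightly enlarged cap, which gives the $O(\eps^{d-1}m^{-2/(d+1)})$ bound per coordinate needed for concentration at scale $\eps^{d-1}n$.

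Your first two steps are sound (the localization-to-shell and the shallow slab), and would give $O(\eps^{d-1}n\log n)$. But the third step, which is necessary to remove the $\log n$, has a real gap that you yourself flag: in a deep slab $L_k$, the expected number of vertices can be far below $1$, and you have no rigorous mechanism to convert this into an $n^{-\omega(1)}$-probability bound on the actual count. The two escape routes you sketch are not straightforward. Regarding negative correlation: the specific claim that ``two points with overlapping empty caps cannot both be vertices'' is not correct --- two nearby points in a thin shell can easily both be vertices, with the empty witnessing caps tilted away from each other, so no useful negative dependence falls out directly. Regarding Poissonization: the number of vertices in a fixed region is a highly non-local functional of the point process (removing one point can expose many new vertices in a cascade), so off-the-shelf concentration for Poisson functionals does not apply without controlling exactly the kind of second-moment increment statistics that the paper's Efron--Stein route is built on. In that sense the paper's approach is not merely an alternative; it is precisely the device that resolves the obstacle on which your third step founders.
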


The proof of \cref{lem:sphere-distributed} is extremely simple; one basically just applies a Chernoff bound.

\begin{proof}[Proof of \cref{lem:sphere-distributed}]
The vertices of $P$ are precisely the $n$ random points chosen to define $P$. Note that by \cref{fact:volume} the surface area of $C$ is $\Theta(\eps^{d-1})$, so each of the $n$ random points lies in $C$ with probability $\Theta(\eps^{d-1})$. Thus, the expected number of points in $C$ is $\Theta(\eps^{d-1})n\geq \log^{2(d-1)}n$. So by the Chernoff bound, with probability $1-n^{-\omega(1)}$ the number of points in $C$ is at most twice its expectation, and therefore of the form $O(\varepsilon^{d-1} n)$.
\end{proof}

The proof of \cref{lem:ball-distributed} is more involved (this is actually the only significant difference between the proofs of \cref{thm:ball} and \cref{thm:sphere}). \cref{lem:ball-distributed} will be a consequence of the following bound for the volume of the complement of a random polytope intersected with a fixed solid cap.

\begin{lem}\label{lem:volume-concentration}
Fix $d\geq 2$, let $m\in \NN$, and define $n=m^{(d-1)/(d+1)}$. Let $C$ be a solid cap with radius $\eps\ge n^{-1/(d-1)}\log^{20} n$ in the unit ball $B\su \RR^d$. Let $P$ be the convex hull of $m$ random points in the ball $B$. Then with probability $1-n^{-\omega(1)}$ we have $\Vol(C\setminus P)\leq O(\varepsilon^{d-1}n^{-2/(d-1)})=O(\varepsilon^{d-1}m^{-2/(d+1)})$.
\end{lem}

Before proving \cref{lem:volume-concentration}, we show how it implies \cref{lem:ball-distributed}.

\begin{proof}[Proof of \cref{lem:ball-distributed}]
Let $X=(p_{1},\dots,p_{m})$ be the sequence of random points defining $P$, and let $T=\lceil n/\eps^2\rceil$. Note that then every integer $t=T,\dots,m$ satisfies
\begin{equation}\label{eq:condition-epsilon-check}
t^{-1/(d+1)}\log^{20}(t^{(d-1)/(d+1)})\leq T^{-1/(d+1)}\log^{20} n\leq n^{-1/(d+1)}\eps^{2/(d+1)}\log^{60(d-1)/(d+1)} n\leq \eps.
\end{equation}

Recall from \cref{fact:volume} that the solid cap $C$ has volume $\Theta(\eps^{d+1})$, so the expected number of indices $i\in \{1,\dots,T\}$ with $p_i\in C$ is $\Theta(\eps^{d+1}T)=\Theta(\eps^{d-1}n)\geq \log^{60(d-1)} n$. Hence a Chernoff bound implies that with probability $1-n^{-\omega(1)}$ there are at most $O(\eps^{d-1}n)$ indices $i\in \{1,\dots,T\}$ with $p_i\in C$ (and hence in particular at most $O(\eps^{d-1}n)$ vertices $p_i\in C$ with $1\leq i\leq T$).

Now for every integer $k$ with $0\leq k < \log_2(m/T)$, let us bound the number of vertices $p_i\in C$ of $P$ with $2^{k}T < i \leq 2^{k+1}T$. Note that this number of vertices is at most the number $Z_k$ of indices $i$ with $2^{k}T < i \leq 2^{k+1}T$ and $p_i\in C\setminus \conv(p_1,\dots,p_{2^kT})$. We claim that for each $k$, we have $Z_k\leq O(\eps^{d-1}(2^kT)^{(d-1)/(d+1)})$ with probability $1-n^{-\omega(1)}$. Indeed, by \cref{lem:volume-concentration} (using \cref{eq:condition-epsilon-check}) with probability $1-n^{-\omega(1)}$ we have $\Vol(C\setminus \conv(p_1,\dots,p_{2^kT}))=O(\eps^{d-1}(2^kT)^{-2/(d+1)})$. Conditioning on any such outcome of $p_1,\dots,p_{2^kT}$, a Chernoff bound shows that $Z_k\leq O(\eps^{d-1}(2^kT)^{(d-1)/(d+1)})$ with probability $1-n^{-\omega(1)}$. Overall this indeed shows that with probability $1-n^{-\omega(1)}$ we have $Z_k\leq O(\eps^{d-1}(2^kT)^{(d-1)/(d+1)})$ and so there are at most $O(\eps^{d-1}(2^kT)^{(d-1)/(d+1)})$ vertices $p_i\in C$ of $P$ with $2^{k}T < i \leq 2^{k+1}T$.

Hence with probability at least $1-(2+\log_2 m)\cdot n^{-\omega(1)}=1-n^{-\omega(1)}$, the total number of vertices of $P$ in $C$ is at most
\begin{align*}
&O(\eps^{d-1}n)+\sum_{k=0}^{\lfloor \log_2(m/T)\rfloor} O\left(\eps^{d-1}(2^kT)^{(d-1)/(d+1)}\right)\\ 
&\qquad= O(\eps^{d-1}n) + O(\eps^{d-1})\cdot T^{(d-1)/(d+1)} \sum_{k=0}^{\lfloor \log_2(m/T)\rfloor} 2^{k(d-1)/(d+1)}\\
&\qquad= O(\eps^{d-1}n) + O(\eps^{d-1})\cdot T^{(d-1)/(d+1)}\cdot O((m/T)^{(d-1)/(d+1)})\\
&\qquad= O(\eps^{d-1}n) + O(\eps^{d-1}m^{(d-1)/(d+1)}) = O(\eps^{d-1}n),
\end{align*}
as desired.
\end{proof}

To prove \cref{lem:volume-concentration}, we will need some auxiliary results from the literature. First,
we need the approximate expected volume of a random polytope (this estimate is classical, having been 
first computed by Raynaud~\cite{Ray70}).
\begin{thm}
\label{lem:expected-vertices}Let $P$ be a random polytope as in \cref{lem:ball-distributed}. Then the expected volume of $B\setminus P$ is $\Theta(m^{-2/(d+1)})=\Theta(n^{-2/(d-1)})$.
\end{thm}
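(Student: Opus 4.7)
The plan is to reduce both estimates to a single volume computation via \emph{Efron's identity}, which states that for the convex hull $P_m$ of $m$ iid uniformly random points in any convex body $K \subseteq \RR^d$, the expected number $\E[V_m]$ of vertices satisfies
$$\E[V_m] \;=\; m \cdot \frac{\E[\Vol(K)-\Vol(P_{m-1})]}{\Vol(K)}.$$
This identity is immediate from the observation that a point $X_i$ is a vertex of $P_m$ precisely when $X_i \notin \conv(X_j : j\neq i)$; by exchangeability, conditioning on the other points, this event has probability $\E[\Vol(K \setminus P_{m-1})]/\Vol(K)$, and summing over $i$ gives the identity. Applying it with $K=B$, and noting that $\Vol(B)$ is a $d$-dependent constant and that $\E[\Vol(B\setminus P_{m-1})]$ and $\E[\Vol(B\setminus P_m)]$ are of the same order, it suffices to establish the single two-sided bound
$$\E[\Vol(B\setminus P_m)] \;=\; \Theta(m^{-2/(d+1)}).$$

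For the upper bound, I would use a cap argument in the boundary layer of $B$. Given $x\in B$ with $1-\|x\|=t$, every halfspace through $x$ not containing the origin cuts off a solid cap of radius at least $\Omega(\sqrt{t})$, and hence by \cref{fact:volume} of volume at least $\Omega(t^{(d+1)/2})$. Taking a polynomial-size $\eps$-net on the sphere of outward unit directions at $x$ (with $\eps$ a small power of $t$) and union-bounding over the corresponding discretized caps would give $\Pr(x\notin P_m) \leq \exp(-\Omega(m t^{(d+1)/2}))$. Integrating over $x\in B$ in polar coordinates and substituting $u = m t^{(d+1)/2}$ then yields
$$\E[\Vol(B\setminus P_m)] \;\leq\; C\int_0^1 e^{-c m t^{(d+1)/2}}\,dt \;=\; O(m^{-2/(d+1)}).$$

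For the matching lower bound, I would invoke \cref{lem:covering-packing} to pack $\Omega(m^{(d-1)/(d+1)})$ pairwise disjoint solid caps in $B$, each of radius $\Theta(m^{-1/(d+1)})$ and hence, by \cref{fact:volume}, of volume $\Theta(1/m)$. Each such cap contains none of the $m$ random points with probability at least $(1-\Theta(1/m))^m \geq \Omega(1)$, and whenever this happens the entire cap lies inside $B\setminus P_m$. Summing the contributions gives $\E[\Vol(B\setminus P_m)] \geq \Omega(m^{(d-1)/(d+1)})\cdot\Omega(1/m) = \Omega(m^{-2/(d+1)})$.

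The main technical obstacle is the net argument in the upper bound: one has to discretize the continuum of halfspace directions through $x$ finely enough that hitting every net-cap forces every separating halfspace-cap to be hit, while keeping the net size polynomial in $1/t$ so that the exponential tail $e^{-\Omega(mt^{(d+1)/2})}$ comfortably absorbs the union-bound factor. This is a standard but delicate discretization; handled carelessly it would introduce spurious logarithmic losses that would weaken the conclusion to $\Theta(m^{-2/(d+1)} \operatorname{polylog} m)$ and break the sharp $\Theta$-bound.
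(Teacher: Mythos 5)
Your strategy (reduce both estimates to a single statement $\E[\Vol(B\setminus P_m)]=\Theta(m^{-2/(d+1)})$ via Efron's identity, then prove that by a cap-packing lower bound and a ``no large missing cap'' upper bound) is a sensible and genuinely different route from the paper, which does not prove this lemma at all but cites it as a classical result of Raynaud. The Efron identity, the lower bound via $\Omega(m^{(d-1)/(d+1)})$ disjoint caps of volume $\Theta(1/m)$, and the observation that an empty cap is entirely contained in $B\setminus P_m$, are all correct.

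The gap is in the upper bound, and it is more serious than your closing paragraph suggests. A net of scale $\eta$ in direction space at a point $x$ with $1-\|x\|=t$ requires $\eta\lesssim t$ in order for the shrunk caps to retain depth $\Omega(t)$, so the union bound unavoidably carries a prefactor of order $t^{-(d-1)}$. Whatever power of $1/t$ you use for $\eta$, the best you can extract from $\Pr(x\notin P_m)\le\min\bigl(1,\,C\,t^{-(d-1)}e^{-cm t^{(d+1)/2}}\bigr)$ after integrating over the boundary layer is $\E[\Vol(B\setminus P_m)]=O\bigl((\log m/m)^{2/(d+1)}\bigr)$: the crossover scale where the exponential beats the polynomial prefactor is $t^*\asymp(\log m/m)^{2/(d+1)}$, and the trivial bound in $\{t\le t^*\}$ dominates. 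This is exactly a polylogarithmic loss, and because Efron's identity ties $\E[f_0(P_m)]$ linearly to $\E[\Vol(B\setminus P_{m-1})]$, the loss propagates directly to the vertex count, giving only $\E[f_0(P_m)]=O(m^{(d-1)/(d+1)}(\log m)^{2/(d+1)})$ rather than the stated $\Theta(m^{(d-1)/(d+1)})$. Removing the $\log$ is not a matter of discretizing more carefully: the standard fix is to replace the uniform direction-net with the B\'ar\'any--Larman ``economic cap covering'' of the wet part (or, alternatively, to compute $\E[\Vol(B\setminus P_m)]$ by a Blaschke--Petkantschin / Rényi--Sulanke type explicit integral over $d$-tuples spanning facets). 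Either of those is a substantive additional ingredient, not a routine refinement, so as written your upper bound does not yet close.
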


Second, we will need the following concentration inequality by Boucheron, Lugosi and Massart~\cite[Corollary~3]{BLM03}. This is, in some sense, a version of the well-known Efron--Stein inequality with an exponential tail bound.
\begin{lem}
\label{lem:BLM}Let $X=(X_{1},\dots,X_{m})$ be a sequence
of independent random variables, and let $X^{(i)}$ be
obtained from $X$ by replacing $X_{i}$ with an independent copy.
Let $f(X)$ be any function of $X$, and suppose $\nu> 0$ is such that the outcome of $X$ satisfies
\[
\E\left[\sum_{i=1}^{m} (f(X)-f(X^{(i)}))^{2}\middle|X\right]\le \nu
\]
with probability 1. Then $\Pr(f(X)>\E f(X)+t)\le e^{-t^2/(4\nu)}$ for all $t>0$.
\end{lem}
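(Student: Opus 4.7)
The plan is to prove this using the entropy method of Ledoux and Boucheron--Lugosi--Massart, which derives exponential concentration from Efron--Stein-type second-moment quantities. The central strategy is to upper-bound the logarithmic moment generating function $\psi(\lambda) = \log \E\bigl[e^{\lambda(Z - \E Z)}\bigr]$ and then apply Chernoff's inequality.

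The first step is the tensorization of entropy (Han's inequality) for product measures: for any positive function $g$ of $X = (X_1, \dots, X_m)$,
\[
\operatorname{Ent}(g) \leq \sum_{i=1}^m \E\bigl[\operatorname{Ent}^{(i)}(g)\bigr],
\]
where $\operatorname{Ent}^{(i)}$ denotes entropy with respect to $X_i$ alone, conditional on the other coordinates. Applying this with $g = e^{\lambda Z}$ reduces the task to bounding each one-coordinate entropy, and each such entropy can be controlled by symmetrization against the resampled variable $X^{(i)}$: writing $Z^{(i)} = f(X^{(i)})$, one derives a pointwise modified-log-Sobolev-type inequality $\operatorname{Ent}^{(i)}(e^{\lambda Z}) \leq c\lambda^2 \E^{(i)}\bigl[(Z - Z^{(i)})^2 e^{\lambda Z}\bigr]$ for $\lambda$ in an appropriate range, via the Legendre duality $\operatorname{Ent}(h) = \sup\{\E[hU] : \E[e^U]\le 1\}$ together with careful convex estimates on $u\mapsto u\log u$.

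The second step is Herbst's argument. Summing over $i$ and writing $V(X) = \sum_i \E^{(i)}[(Z - Z^{(i)})^2]$ (which has expectation $\nu$), the combined inequality becomes a differential inequality $\lambda \psi'(\lambda) - \psi(\lambda) \leq c\lambda^2 \nu$; integrating from $0$ with $\psi(0) = \psi'(0) = 0$ yields a subgaussian MGF bound of the form $\psi(\lambda) \leq C\lambda^2 \nu$. Chernoff then gives
\[
\Pr(Z > \E Z + t) \leq \inf_{\lambda > 0} e^{-\lambda t + \psi(\lambda)} \leq \exp\bigl(-t^2/(4C\nu)\bigr),
\]
and this Gaussian-type bound is majorised by $2^{-t/\sqrt{\nu}}$ as soon as $t$ exceeds an absolute constant times $\sqrt{\nu}$. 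For smaller $t$ the target bound is essentially trivial, and for the range in between one falls back on the Efron--Stein variance inequality $\Var(Z) \leq \nu/2$ combined with Chebyshev.

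The hard part will be the pointwise modified-log-Sobolev estimate in the first step: this is the heart of the entropy method and is where all the convex-analytic work is concentrated. In the generality allowed by the lemma (no bounded-differences hypothesis), a further delicate step is needed to pass from the random quantity $V(X)$ to its expectation $\nu$, either via a self-bounding/peeling argument or a separate concentration bound for $V(X)$ itself. Since this inequality is classical and not the focus of our paper, we simply cite the statement from \cite[Section~3.6]{BLM13} rather than reproducing the derivation.
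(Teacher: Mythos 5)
Your proof, in the only sense that matters, matches the paper's: the paper does not prove this lemma but cites it directly from \cite[Section~3.6]{BLM13}, and you conclude by doing exactly the same.

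The entropy-method sketch you give before falling back on the citation is, however, not an outline of the cited argument, and it would not yield the stated inequality on its own. Herbst's argument as you present it produces a sub-Gaussian tail $\exp\bigl(-t^2/(4C\nu)\bigr)$, whereas the target $2^{-t/\sqrt{\nu}}$ is sub-\emph{exponential}: it has strictly negative slope at $t=0$, so no Gaussian-type bound (which has zero slope at $t=0$) can dominate it near the origin, regardless of the constant $C$. The Chebyshev fall-back via $\Var Z\le\nu/2$ gives $\nu/(2t^2)$, which dominates $2^{-t/\sqrt{\nu}}$ only for $t/\sqrt{\nu}$ in roughly the window $[1,6.5]$; in particular it says nothing for $t<\sqrt{\nu}$, where the target is a nontrivial constant rather than ``essentially trivial,'' and it need not reach the regime where your Gaussian bound takes over without knowing $C$. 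The very shape of the bound---a factor of two lost per $\sqrt{\nu}$-increment of $t$---is a strong signal that the result in \cite[Section~3.6]{BLM13} is obtained by a different mechanism, namely iterating the Efron--Stein variance bound across dyadic quantile levels (equivalently, moment estimates of the shape $\|(Z-\E Z)_+\|_q = O(q\sqrt{\nu})$), rather than by a modified log-Sobolev inequality and Herbst's argument. Since your actual proof is the citation, this defect in the sketch is not fatal, but the sketch should not be read or presented as a derivation of the lemma.
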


Now we prove \cref{lem:volume-concentration}.

\begin{proof}[Proof of \cref{lem:volume-concentration}]

Let $F$ be the set of points in the ball $B$ which are not contained in any solid cap of radius $n^{-1/(d-1)}\log^2 n$. Equivalently, $F$ is the open ball with radius $\cos (n^{-1/(d-1)}\log^2 n)=1-\Theta(n^{-2/(d-1)}\log^4 n)$ around the origin $0$. Note that $\Vol(B\setminus F)=\Theta(n^{-2/(d-1)}\log^4 n)$. Furthermore, using the first part of \cref{fact:volume}, we have
\[\Vol(C\setminus F)\leq \Vol(\conv(C\cup \{0\})\setminus F) =\Theta(\eps^{d-1})\Vol(B\setminus F)=\Theta(\eps^{d-1}n^{-2/(d-1)}\log^4 n).\]

Let $X=(p_{1},\dots,p_{m})$ be the sequence of random points defining $P$. We claim that with probability $1 - n^{-\omega(1)}$ we have $F\su P$. Indeed, by \cref{lem:cap-diameter-ball}, with probability $1 - n^{-\omega(1)}$ all facets of $P$ are disjoint from $F$, meaning that either $F\su P$ or $F\cap P=\emptyset$. However, note that with probability at least $1 - n^{-\omega(1)}$ we have $p_j\in F$ for some $j\in \{1,\dots,m\}$, and so in particular $F\cap P\neq \emptyset$. This shows that we indeed have $F\su P$ with probability $1 - n^{-\omega(1)}$.

Let us now define $Q=\conv(F \cup \{p_1,\dots,p_m\})$ and $f(X)=\Vol(C\setminus Q)$. Then with probability $1 - n^{-\omega(1)}$ we have $Q=P$, so in order to prove the lemma it suffices to show that with probability $1- n^{-\omega(1)}$ we have $f(X)\leq O(\eps^{d-1}n^{-2/(d-1)})$.

Note that always $f(X)\leq \Vol(C \setminus P)\leq \Vol(\conv(C \cup \{0\}) \setminus P)$. The spherical sector $\conv(C \cup \{0\})$ comprises a $O(\eps^{d-1})$-fraction of the ball $B$. So by symmetry and linearity of expectation, \cref{lem:expected-vertices} implies $\E [\Vol(\conv(C \cup \{0\}) \setminus P )]= \Theta(\eps^{d-1})\cdot \E[\Vol(B\setminus P)]=\Theta(\eps^{d-1}n^{-2/(d-1)})$ and therefore $\E [f(X)]\leq O(\eps^{d-1}n^{-2/(d-1)})$.

Now, as in \cref{lem:BLM}, let $X^{(i)}$ be obtained from $X$ by replacing $p_{i}$ with an independent random point $p_i'$ in the ball $B$, and let $Q^{(i)}=\conv(F \cup \{p_1,..,p_{i-1},p_i',p_{i+1},..,p_m\})$.
Then it suffices to prove that
\begin{equation}\label{eq:efron-stein-condition}
\E\left[\sum_{i=1}^{m}(f(X)-f(X^{(i)}))^{2}\middle|X\right]\le O\left(\eps^{2(d-1)} n^{-4/(d-1)}\log^{-2} n\right)
\end{equation}
for every outcome of $X$. Indeed, given \cref{eq:efron-stein-condition}, we can apply \cref{lem:BLM} with $t=\eps^{d-1}n^{-2/(d-1)}$, and conclude that with probability at least $1-e^{-\Omega(\log^{2} n)}=1-n^{-\omega(1)}$ we have $f(X)\leq \E[f(X)]+\eps^{d-1}n^{-2/(d-1)}=O(\eps^{d-1}n^{-2/(d-1)})$, as desired.

So let us fix an outcome of $X=(p_1,\dots,p_m)$ for the rest of this proof. For $i=1,\dots,m$, let $\hat{X}_i=(p_1,..,p_{i-1},p_{i+1},..,p_m)$ be the sequence of points obtained from $X$ by deleting $p_i$, and define $\hat Q_i=\conv(F \cup \{p_1,..,p_{i-1},p_{i+1},..,p_m\})$. By slight abuse of notation, let us write $f(\hat{X}_i)=\Vol(C\setminus Q^{(i)})$. We claim that the following statements hold for any outcome of the random points $p_1',\dots,p_m'$ and any $i=1,\dots,m$.
\begin{enumerate}
    \item[(A)] $0\leq f(\hat{X}_i)-f(X^{(i)})\leq \Vol(Q^{(i)} \setminus \hat{Q}_i)\leq O(n^{-(d+1)/(d-1)}\log^{2(d+1)} n)$.
    \item[(B)] $0\leq f(\hat{X}_i)-f(X)\leq \Vol(Q \setminus \hat{Q}_i)\leq O(n^{-(d+1)/(d-1)}\log^{2(d+1)} n)$.
    \item[(C)] If $f(\hat{X}_i) \neq f(X^{(i)})$, then $p_i'$ must be contained in a solid cap of radius $n^{-1/(d-1)}\log^2 n$ which intersects $C$.
    \item[(D)] Every point $p\in C\setminus F$ appears in $Q \setminus \hat{Q}_i$ for at most $d+1$ different indices $i$.
\end{enumerate}

In (A), the first two inequalities follow directly from the definitions of $f(X^{(i)})$ and $f(\hat{X}_i)$. To prove the last inequality, note that every point in $Q^{(i)} \setminus \hat{Q}_i$ must lie in some solid cap of radius $n^{-1/(d-1)}\log^2 n$ that contains $p_i'$. Indeed, for every $p\in Q^{(i)} \setminus \hat{Q}_i$, we can find a hyperplane $H_p$ such that $p$ is on one side of $H_p$ whereas $F$ (and also $p_1,\dots,p_{i-1},p_{i+1},\dots,p_m$) are on the other side. Then $H$ cuts  a solid cap of radius at most $n^{-1/(d-1)}\log^2 n$ out of the ball $B$, which contains $p$ and also $p_i'$ (since $p\in Q^{(i)}=\conv(F \cup \{p_1,..,p_{i-1},p_{i+1},..,p_m\})$). Hence $Q^{(i)} \setminus \hat{Q}_i$ is a subset of the union of all solid caps of radius $n^{-1/(d-1)}\log^2 n$ containing $p_i'$. The total volume of this union is at most the volume of a solid cap of radius $2n^{-1/(d-1)}\log^2 n$, and this volume is $O(n^{-(d+1)/(d-1)}\log^{2(d+1)} n)$ by \cref{fact:volume}.

The proof of (B) is analogous by considering the point $p_i$ instead of $p_i'$.

Note that the assumption in (C) implies that $(Q^{(i)} \setminus \hat{Q}_i)\cap C\neq \emptyset$. Consider some point $p\in (Q^{(i)} \setminus \hat{Q}_i)\cap C$ and recall from the argument for (A) that $p$ must lie in some solid cap of radius $n^{-1/(d-1)}\log^2 n$ containing $p_i'$. Since $p\in C$, this solid cap intersects $C$.

Finally, for (D), note that for any $p\in Q$, by Carath\'{e}odory's theorem there are indices $i_1,\dots,i_{d+1}$ such that $p\in \conv(F\cup\{p_{i_1},\dots,p_{i_{d+1}}\})$. In particular, for all $i\notin\{i_1,\dots,i_{d+1}\}$ we have $p\in \hat{Q}_i$ and therefore $p\notin Q \setminus \hat{Q}_i$.

Now, in order to show \cref{eq:efron-stein-condition}, let us first observe that we always have
\[\sum_{i=1}^{m}(f(X)-f(X^{(i)}))^{2}\leq 2\sum_{i=1}^{m}(f(X)-f(\hat{X}_i))^{2}+2\sum_{i=1}^{m}(f(\hat{X}_i)-f(X^{(i)}))^{2}\]
and hence
\begin{equation}\label{eq:efron-stein-condition-split}
    \E\left[\sum_{i=1}^{m}(f(X)-f(X^{(i)}))^{2}\middle|X\right]\leq 2\sum_{i=1}^{m}(f(\hat{X}_i)-f(X))^{2}+2\sum_{i=1}^{m} \E\left[(f(\hat{X}_i)-f(X^{(i)}))^{2}\middle| X\right].
\end{equation}

By (C), for every $i=1,\dots,m$, we can only have $f(\hat{X}_i)-f(X^{(i)})\neq 0$ if $p_i'$ is contained in the union of all solid caps of radius $n^{-1/(d-1)}\log^2 n\leq \eps$ which intersect $C$. Note that this union is a subset of the spherical shell $B\setminus F$, and by the first part of \cref{fact:volume} the volume of this union is at most $O((3\eps)^{d-1})\Vol(B\setminus F)=O(\eps^{d-1}n^{-2/(d-1)}\log^4 n)$. Hence the probability of having $f(\hat{X}_i)-f(X^{(i)})\neq 0$ is at most $O(\eps^{d-1}n^{-2/(d-1)}\log^4 n)$. Given (A), this implies
\begin{align*}
\sum_{i=1}^{m}\E\left[(f(\hat{X}_i)-f(X^{(i)}))^{2}\middle| X\right]
&\le \sum_{i=1}^{m} O(\eps^{d-1}n^{-2/(d-1)}\log^4 n)\cdot O(n^{-2(d+1)/(d-1)}\log^{4(d+1)} n)\\
&= n^{(d+1)/(d-1)}\cdot O(\varepsilon^{d-1}n^{-2(d+2)/(d-1)} \log^{4(d+2)} n)\\
&= O(\eps^{d-1} n^{-(d+3)/(d-1)} \log^{4(d+2)} n)\leq O(\eps^{2(d-1)} n^{-4/(d-1)}\log^{-2} n),\end{align*}
where we used that $m=n^{(d+1)/(d-1)}$ and $\eps^{d-1}\ge n^{-1}\log^{20(d-1)} n\geq n^{-1}\log^{4(d+3)} n$.

It remains to bound the first sum on the right-hand side of \cref{eq:efron-stein-condition-split}. Note that we have $f(\hat{X}_i)-f(X)=\Vol(C\setminus \hat{Q}_i)-\Vol(C\setminus Q)=\Vol((Q \setminus \hat{Q}_i)\cap C)$ and $(Q \setminus \hat{Q}_i)\cap C\su C\setminus F$ for every $i$. By (D), every point in $C\setminus F$ can appear in $(Q \setminus \hat{Q}_i)\cap C$ for at most $d+1$ indices $i$, so we obtain
\[\sum_{i=1}^{m} (f(\hat{X}_i) - f(X)) = \sum_{i=1}^{m} \Vol((Q \setminus \hat{Q}_i)\cap C)\leq (d+1) \cdot \Vol(C\setminus F) \leq O(\eps^{d-1} n^{-2/(d-1)} \log^4 n).\]
Using (B), this implies
\begin{align*}
\sum_{i=1}^m (f(\hat{X}_i)-f(X))^2 &\leq O(n^{-(d+1)/(d-1)}\log^{2(d+1)} n) \sum_{i=1}^{m} (f(\hat{X}_i) - f(X))\\
&\le O(\eps^{d-1} n^{-(d+3)/(d-1)} \log^{2(d+3)} n)\le O(\eps^{2(d-1)} n^{-4/(d-1)}\log^{-2} n),
\end{align*}
where we again used that $\eps^{d-1}\geq n^{-1}\log^{4(d+3)} n$.

All in all, we can conclude that the left-hand side of \cref{eq:efron-stein-condition-split} is bounded by $O(\eps^{2(d-1)} n^{-4/(d-1)}\log^{-2} n)$, showing \cref{eq:efron-stein-condition}.
\end{proof}

\section{The ``lampshade'' argument}
\label{sec:lampshade}

The following lemma drives the proof of \cref{thm:sphere,thm:ball}. It is inspired by (and very closely related to) a lemma in Shitov's paper proving that polygons have sublinear extension complexity~\cite[Lemma 3.1]{Sh14v1}. Informally, \cref{lem:shitov} states the following. For a polytope $P\su B$, let us consider a set of facets $F'$ and a set of vertices $V'$, such that the facets in $F'$ are far away from the vertices in $V'$. Then the submatrix of the slack matrix of $P$ containing only the slacks between the facets in $F'$ and the vertices in $V'$ has bounded nonnegative rank. This even remains true if we are allowed to modify this submatrix by subtracting rows corresponding to the slacks of vertices of $P$ that are close to the facets in $F'$.

\begin{lem}
\label{lem:shitov}
For any $d\geq 2$ there is $R\in \NN$ such that the following holds. Let $B\subseteq\RR^{d}$ be the unit ball in $\RR^{d}$ and $S\subseteq B$ be the boundary of $B$ (i.e.\ the unit sphere). Let $P\su B$ be a polytope, and let $V$ and $F$ be the set of vertices and the set of facets of $P$, respectively. Furthermore, let $a\in S$ be a point, let $0<\eps<\pi/5$, and let $X\subseteq B$ be the solid cap with radius $\eps$ centred at $a$. Also, let $Y\subseteq B$ be the convex hull of all points on $S$ with spherical distance at least $5\varepsilon$ from $a$ (this is the solid cap of radius $\pi-5\varepsilon$ centred at the point antipodal from $a$).

Now, suppose that $F'\su F$ is a subset of the facets of $P$ such that every facet in $F'$ is encapsulated by $X$. Furthermore, suppose that $V'\su V$ is a subset of the vertices of $P$ such that $V'\su Y$.

Let $M$ be a slack matrix of the polytope $P$, with rows indexed by $V$ and columns indexed by $F$. Let $M'$ be a matrix with rows indexed by $V'$ and columns indexed by $F'$, such that, for each $v\in V'$, at least one of the following conditions holds:
\begin{itemize}
    \item[(1)] $M_{v,f}'=M_{v,f}$ for all $f\in F'$, or
    \item[(2)] there is $x_v\in V\cap X$ such that $M_{v,f}'=M_{v,f}-M_{x_v,f}$ for all $f\in F'$.
\end{itemize}
Then the matrix $M'$ has nonnegative entries and $\rank_+ M'\le R$.
\end{lem}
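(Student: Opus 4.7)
The plan is to bound $\rank_+ M'$ by a constant $R=R(d)$ via a reduction to the existence of two polyhedral ``lampshades'' in $\RR^d$, whose explicit geometric construction is deferred to \cref{sect-cone}. Writing each facet $f\in F$ as the equality case of an inequality $a_f\cdot x\le b_f$, with corresponding slack function $\psi_f(x)=b_f-a_f\cdot x$, I will assume the existence of two polytopes $Q,T\su\RR^d$, each with a bounded (in terms of $d$) number of vertices, satisfying the following properties: $Q$ satisfies $Y\su Q$ and $\psi_f(q)\ge 0$ for every $q\in Q$ and every $f\in F'$; and $T$ satisfies $X-Y\su T$ (Minkowski difference) and $a_f\cdot t\ge 0$ for every $t\in T$ and every $f\in F'$. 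Both conditions are geometrically natural: since every $f\in F'$ is encapsulated by the cap $X$ around $a$, the outward normal $a_f$ is forced to point roughly toward $a$; and since $Y$ sits in the antipodal cap, vectors in $X-Y$ point roughly in the $+a$ direction, so each such $a_f$ has the required sign on both of these regions. Both $Q$ and $T$ can be built as polyhedral approximations of suitable truncated cones aligned with the axis $\RR a$.

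Granting $Q$ and $T$, the rest is linear algebra. Let $w_1,\dots,w_k$ be the vertices of $Q$ and $t_1,\dots,t_\ell$ be the vertices of $T$ (so $k+\ell=O(1)$), and introduce the $k+\ell$ nonnegative vectors in $\RR_{\ge 0}^{F'}$
\[u_i=\bigl(\psi_f(w_i)\bigr)_{f\in F'}\ (i=1,\dots,k),\qquad u'_j=\bigl(a_f\cdot t_j\bigr)_{f\in F'}\ (j=1,\dots,\ell).\]
Nonnegativity of $M'$ then follows immediately: in case (1), $M'_{v,f}=\psi_f(v)\ge 0$ since $v\in Y\su Q$; in case (2), $M'_{v,f}=\psi_f(v)-\psi_f(x_v)=a_f\cdot(x_v-v)\ge 0$ since $x_v-v\in X-Y\su T$. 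For the rank bound, in case (1) expand $v=\sum_i\lambda_i w_i$ as a convex combination of the vertices of $Q$; since $\psi_f$ is affine, the $v$-th row of $M'$ equals $\sum_i\lambda_i u_i$, a nonnegative combination of the $u_i$. In case (2) expand $x_v-v=\sum_j\mu_j t_j$ as a convex combination of the vertices of $T$; since the functional $t\mapsto a_f\cdot t$ is linear, the $v$-th row equals $\sum_j\mu_j u'_j$, a nonnegative combination of the $u'_j$. Every row of $M'$ thus lies in the nonnegative cone spanned by the fixed collection $u_1,\dots,u_k,u'_1,\dots,u'_\ell$, which gives $\rank_+M'\le k+\ell=R(d)$.

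The main obstacle is the uniform geometric construction of $Q$ and $T$, which must work simultaneously against every polytope $P$ and every choice of $F'$ subject only to the encapsulation hypothesis. For $Q$, the idea is to fix a hyperplane of the form $\{a\cdot x=1-c\eps\}$ (for a suitable absolute constant $c\in(0,1)$) that separates $X$ from $Y$, rotate it about the axis $\RR a$ to obtain the boundary of a truncated cone sealing $X$ off from $Y$, and polyhedrally approximate this cone using only $O(1)$ supporting halfspaces. The heart of the construction is a quantitative argument using the encapsulation hypothesis $H_f\cap B\su X$: this condition pins down $a_f$ and $b_f$ tightly enough that the single polyhedral cone lies on the positive-slack side of every $H_f$ simultaneously. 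The polytope $T$ is then produced by an analogous construction centred at the origin, using the linear functionals $t\mapsto a_f\cdot t$ in place of the affine $\psi_f$ and $X-Y$ in place of $Y$. These details are the content of \cref{sect-cone}.
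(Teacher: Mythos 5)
Your linear algebra is correct and your overall plan is sound, but it takes a genuinely different route from the paper's, most significantly in how case (2) is handled. The paper uses a \emph{single} object $Q$: not a bounded polytope, but an unbounded polyhedron with $O(1)$ facets (\cref{lem:cap-polyhedron}). Case (2) is treated by pushing $v$ along the ray $v + \tfrac{\alpha}{1-\alpha}(v - x_v)$ with $\alpha \to 1^-$, so that the relevant point $w_v$ escapes to infinity inside $Q$; the finite bound is recovered via condition~(iii) of \cref{lem:cap-polyhedron} (any finite subset of $Q$ sits in the hull of $R$ points of $Q$), and the boundary case $\alpha = 1$ is absorbed by the closedness of the set of matrices of bounded nonnegative rank (\cref{fact:closed}). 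You instead avoid the limit argument entirely by observing that in case (2) the row of $M'$ is $f \mapsto a_f\cdot(x_v - v)$, with $x_v - v$ lying in the Minkowski difference $X - Y$, and introducing a \emph{second} bounded polytope $T \supseteq X - Y$ on which all the relevant linear functionals $t \mapsto a_f\cdot t$ are nonnegative. This is a clean idea and does work: one can check that $X - Y$ lies strictly inside the dual cone of the admissible facet normals (e.g. $q\cdot(x-y) \ge \cos 2\eps - \cos 4\eps > 0$ for $x\in X$, $y \in Y$, and $q$ in the $\eps$-cap around $a$), so a polytope $T$ with $O_d(1)$ vertices can be squeezed between them. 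What your proposal buys is that no topological closure argument is needed. What it costs is that you now need \emph{two} geometric constructions ($Q$ and $T$) rather than one, and you must be careful that both sign claims ($\psi_f \ge 0$ on $Q$ and $a_f\cdot t \ge 0$ on $T$) are anchored correctly: the sign of $a_f$ is only fixed once you know which side of $H_f$ contains $P$, which the paper handles by noting $V\cap Y \ne \emptyset$. Be aware that the paper's \cref{sect-cone} constructs the unbounded polyhedron of \cref{lem:cap-polyhedron}, not your bounded $Q$ and $T$, so as written your deferral does not quite land; you would need to adapt that construction (e.g. truncate the polyhedron of \cref{lem:cap-polyhedron} with one extra half-space to get $Q$, and carry out a parallel construction through the origin for $T$).
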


The most important ingredient of the proof of \cref{lem:shitov} is the following geometric lemma. The convex set $Q$ in this lemma corresponds to the ``lampshade'' described in the proof outline in \cref{sect-outline}.

\begin{lem}
\label{lem:cap-polyhedron}For every integer $d\geq 2$, there is a constant $R\in \NN$ such that the following holds. Let $B\subseteq\RR^{d}$ be the unit ball in $\RR^{d}$, and let $S\subseteq B$ be the boundary of $B$. Let $a\in S$ be a point, let $0<\eps<\pi/5$, and let $X\subseteq B$ be the solid cap with radius $\varepsilon$ centred at $a$. Furthermore, let $Y\subseteq B$ be the convex hull of all points on $S$ with spherical distance at least $5\varepsilon$ from $a$. Then there is a convex subset $Q\su \RR^d$ such that the following conditions hold:
\begin{itemize}
\item[(i)] Every affine hyperplane $H\su \RR^{d}$ intersecting the interior of $B$ such that $H\cap B\su X$ satisfies $H\cap Q=\emptyset$.
\item[(ii)] For any points $x\in X$ and $y\in Y$, we have $\left\lbrace y+t(y-x)\mid t\in \RR_{\geq 0}\right\rbrace\su Q$.
\item[(iii)] For any finite set of points $A\su Q$, we can find a set of points $A'\su Q$ of size $\vert A'\vert\leq R$ such that $A\su \conv(A')$.
\end{itemize}
\end{lem}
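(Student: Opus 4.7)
My plan is to construct $Q$ as an unbounded convex polyhedron ``hanging'' from a polytope $\Delta$ at a height just below the cap $X$. Set $\delta = 3\eps$, and pick $k = k(d) = O_d(1)$ points $p_1, \dots, p_k \in S$ with $p_i \cdot a = \cos\delta$ such that their convex hull $\Delta := \conv(p_1, \dots, p_k)$, viewed as a $(d-1)$-polytope in the affine hyperplane $\{p : p \cdot a = \cos\delta\}$, contains the $(d-1)$-dimensional ball of radius $(7/8)\sin\delta$ centred at $(\cos\delta)\,a$. A suitably fine net on the $(d-2)$-sphere of radius $\sin\delta$ furnishes such a configuration with $k$ depending only on $d$, via the support-function characterisation of the convex hull. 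I then set
\[Q \;=\; \Delta + \bigl\{ s_1(p_1 - a) + \dots + s_k(p_k - a) : s_i \geq 0 \bigr\},\]
an unbounded convex polyhedron with $k$ vertices and $k$ extreme ray directions, and take $R = 2k$.

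For property (iii), every $p \in Q$ can be written as $\sum \mu_i p_i - S a$ with $\mu_i \geq 0$, $\sum \mu_i = 1 + S$, where the $a$-component forces $S = (\cos\delta - p \cdot a)/(1 - \cos\delta) \geq 0$. Given a finite $A \subseteq Q$, pick $T$ larger than the maximum of $S$ over $A$ and set $r_i := p_i + T(p_i - a) \in Q$; a direct calculation (taking $\beta_i := \mu_i S/(T(1+S))$ and $\alpha_i := \mu_i(T-S)/(T(1+S))$) shows that every $p \in A$ is the convex combination $\sum \alpha_i p_i + \sum \beta_i r_i$. For property (i), I use the standard formula $\min_{p \in H \cap B} p \cdot a = \cos(\alpha + \gamma)$ when $H = \{n \cdot p = c\}$ has unit normal, $\alpha = \angle(n, a)$ and $\gamma = \arccos c$; the hypothesis $H \cap B \subseteq X$ thus forces $\alpha + \gamma \leq \eps$, giving $\alpha \leq \eps$ and $c \geq \cos\eps$. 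The spherical triangle inequality then yields $\angle(n, p_i) \geq \delta - \alpha \geq 2\eps$, so $n \cdot p_i \leq \cos(2\eps)$; for any $q = \sum \mu_i p_i - Sa \in Q$,
\[ n \cdot q \;\leq\; (1+S)\cos(2\eps) - S\cos\alpha \;\leq\; \cos(2\eps) \;<\; \cos\eps \;\leq\; c,\]
using $\cos\alpha \geq \cos\eps \geq \cos(2\eps)$, so $H \cap Q = \emptyset$.

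The heart of the proof is property (ii). Given $y \in Y$, $x \in X$, $t \geq 0$, set $p := (1+t)y - tx$. The $a$-component equation again forces $S = (\cos\delta - p \cdot a)/(1 - \cos\delta)$, which is positive because $p \cdot a \leq y \cdot a \leq \cos(5\eps) < \cos\delta$. The condition $p \in Q$ then reduces to requiring that the rescaled perpendicular component $p_\perp/(1+S)$ lies in $\Delta_\perp$ (the projection of $\Delta$ onto $a^\perp$); since $\Delta_\perp$ contains a ball of radius $(7/8)\sin\delta$ by construction, it suffices to show $|p_\perp|/(1+S) \leq (7/8)\sin\delta$. Using $1+S = (1 - p \cdot a)/(1 - \cos\delta)$ and half-angle identities, this reduces to the trigonometric inequality $\tan(3\eps/2) \leq (7/8)\tan(2\eps)$ combined with the sharp geometric bound $|p_\perp|/(1 - p\cdot a) \leq \cot(2\eps)$, uniform over all admissible $(y, x, t)$. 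The main obstacle is establishing the geometric bound: I will parameterise $y \cdot a = \cos\phi$ with $\phi \geq 5\eps$ and $x \cdot a = \cos\psi$ with $\psi \leq \eps$, observe that the ratio is monotonically increasing in $t$ and maximised when $y, x$ lie on $S$ with anti-parallel perpendicular components, and use sum-to-product identities ($\sin\phi + \sin\psi = 2\sin\tfrac{\phi+\psi}{2}\cos\tfrac{\phi-\psi}{2}$ and $\cos\psi - \cos\phi = 2\sin\tfrac{\phi+\psi}{2}\sin\tfrac{\phi-\psi}{2}$) to evaluate the $t \to \infty$ limit as $\cot((\phi-\psi)/2) \leq \cot(2\eps)$. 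The remaining inequality $\tan(3\eps/2) \leq (7/8)\tan(2\eps)$ follows from the convexity of $\tan$ on $(0, \pi/2)$, which yields even the stronger $\tan(3\eps/2)/\tan(2\eps) < 3/4$, completing the verification.
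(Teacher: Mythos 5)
Your construction is the same family as the paper's (a polyhedral truncated cone through a net on a parallel of latitude below $a$), just with slightly different parameters: you place the net at angular height $\delta = 3\eps$ with a $7/8$ fill ratio, the paper uses $2\eps$ with a $1/2$ fill ratio. Your verifications of (i) and (iii) are correct and essentially the same idea as the paper's (cap the cone at a parallel hyperplane beyond $A$ for (iii); compare inner products for (i)). Where you diverge genuinely is (ii): the paper sections by a two-dimensional plane through the centre and does Euclidean triangle geometry (its \cref{claim-cone-geometry}), while you reformulate via the central projection $p\mapsto p_\perp(1-\cos\delta)/(1-p\cdot a)$ and reduce to trigonometric inequalities. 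That reformulation is clean and correct, but there is a genuine gap in the way you close it.

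The claim that $t\mapsto \lvert p_\perp\rvert/(1-p\cdot a)$ is monotonically increasing in $t$ is false. Writing $u=y_\perp$, $v=y_\perp-x_\perp$, $a_0=1-y\cdot a$, $a_1=x\cdot a-y\cdot a>0$, the ratio is $\lvert u+tv\rvert/(a_0+ta_1)$, and at $t=0$ its derivative has the sign of $a_0\,u\cdot v - a_1\lvert u\rvert^2$, which for $x,y\in S$ with \emph{parallel} perpendicular components equals $\sin\phi\bigl(\sin\phi-\sin\psi-\sin(\phi-\psi)\bigr)<0$ whenever $0<\psi<\phi<\pi$. So in that regime the ratio starts out strictly decreasing, and bounding only the $t\to\infty$ limit (as your sketch does) is not enough. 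What is true is that the ratio is the restriction of the perspective function $(w,s)\mapsto\lvert w\rvert/s$ (jointly convex for $s>0$) to an affine ray, hence \emph{convex} in $t$, so its supremum over $t\in[0,\infty)$ is $\max\bigl(f(0),\,\lim_{t\to\infty}f(t)\bigr)$, and you must check both endpoints. You handled the $t\to\infty$ endpoint ($\cot((\phi-\psi)/2)\le\cot(2\eps)$); you still need the $t=0$ endpoint, $f(0)=\lvert y_\perp\rvert/(1-y\cdot a)\le \cot(\phi/2)\le\cot(5\eps/2)\le\cot(2\eps)$, which follows but is not in your sketch. Similarly, the assertion that the supremum over $(x,y)$ is attained at extreme points of $X\times Y$ with anti-parallel perpendiculars should be justified via the same joint convexity of the perspective function. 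With those two additions the $\cot(2\eps)$ bound is established uniformly, and together with $\tan(3\eps/2)\le(3/4)\tan(2\eps)\le(7/8)\tan(2\eps)$ (convexity of $\tan$, valid since $2\eps<\pi/2$) your verification of (ii) goes through. So the route is viable and genuinely different in method for (ii), but as written the monotonicity step is a real error that needs to be replaced by the convexity/endpoint argument.
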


Condition (ii) has the following geometric meaning. For any points $x\in X$ and $y\in Y$, consider the ray along the line through $x$ and $y$ starting at $y$ and pointing away from $x$. Condition (ii) states that this ray is entirely contained in $Q$. Note that this implies in particular that $Y\subseteq Q$.

We defer the proof of \cref{lem:cap-polyhedron} to \cref{sect-cone}. As some rough intuition, note that if we take $Q$ to simply be the union of all the rays in condition (ii) (so $Q$ is a cone with its tip cut off) then condition (i) is satisfied (but condition (iii) fails). To prove \cref{lem:cap-polyhedron}, we approximate this naive choice of $Q$ with a polyhedron, satisfying property (iii) while preserving properties (i) and (ii).

In order to deduce \cref{lem:shitov} from \cref{lem:cap-polyhedron}, we need the following well-known fact about sets of matrices with bounded nonnegative rank (for a proof of this fact, see for example \cite[Theorem 3.1]{BCR11} or \cite[Proposition 6.2]{LC09}).

\begin{fact}\label{fact:closed}
For any $m,n,r\in \NN$, let $\mathcal M_r\subseteq \RR_{\geq 0}^{m\times n}$ be the set of all nonnegative $m\times n$ matrices with nonnegative rank at most $r$. Then $\mathcal M_r$ is a closed set.
\end{fact}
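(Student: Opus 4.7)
The plan is to use Lemma~\ref{lem:cap-polyhedron} to exhibit a single factorisation through $R$ points in the convex set $Q$, with the subtraction in option~(2) interpreted as a limit along an appropriate ray in $Q$, and then conclude via Fact~\ref{fact:closed}. We may assume $V'\neq\emptyset$, since otherwise the claim is trivial. Let $R$ and $Q\su\RR^d$ be as provided by Lemma~\ref{lem:cap-polyhedron}; for each facet $f\in F'$, write $\psi_f(x)=b_f-a_f\cdot x$ for the affine slack functional, so that $M_{v,f}=\psi_f(v)$ for every $v\in V$.

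First I would show that $\psi_f\geq 0$ on all of $Q$ for every $f\in F'$. By property (i), $Q$ is disjoint from the hyperplane $H_f$; being convex (hence connected), $Q$ lies entirely in one open half-space of $H_f$. Fixing any $v_0\in V'\su Y\su Q$ (the containment $Y\su Q$ follows from (ii) at $t=0$), the point $v_0$ lies in $P$, in $B$, and outside $X$ (since $Y$ and $X$ are separated by the disc bounding $X$). Because $H_f\cap B\su X$, we have $v_0\notin H_f$, so $\psi_f(v_0)>0$, which forces $Q$ to lie in the positive open half-space of $H_f$.

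Next I would reinterpret the two options uniformly in terms of evaluations of $\psi_f$ at points of $Q$. For each $T>0$, define a matrix $M'_T$ indexed by $V'\times F'$ as follows: if row $v$ is of type (1), set $(M'_T)_{v,f}=\psi_f(v)$; if row $v$ is of type (2), set $(M'_T)_{v,f}=\tfrac{1}{T}\psi_f(v+T(v-x_v))$. By property (ii), every evaluation point lies in $Q$, so by the previous step $M'_T$ is a nonnegative matrix. A direct computation using the affine linearity of $\psi_f$ gives
\[
\tfrac{1}{T}\psi_f(v+T(v-x_v))=\tfrac{1}{T}\psi_f(v)+\psi_f(v)-\psi_f(x_v)\;\longrightarrow\;M_{v,f}-M_{x_v,f}=M'_{v,f}
\]
as $T\to\infty$, while for type (1) rows the entries of $M'_T$ already equal $M'_{v,f}$. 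Thus $M'_T\to M'$ entrywise, and in particular $M'$ is nonnegative as a limit of nonnegative matrices.

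To bound $\rank_+ M'_T$, I would apply property (iii) to the finite set $A_T\su Q$ consisting of the evaluation points used above (one per row of $M'_T$), obtaining a set $A'_T\su Q$ with $|A'_T|\leq R$ such that $A_T\su\conv(A'_T)$. Writing each $p\in A_T$ as a convex combination $p=\sum_{a\in A'_T}\lambda_{p,a}\,a$ and invoking affine linearity, $\psi_f(p)=\sum_a \lambda_{p,a}\,\psi_f(a)$; dividing the type (2) coefficients by $T$ exhibits $M'_T=U_T V_T$, where $U_T\in\RR_{\geq 0}^{V'\times R}$ encodes these (rescaled) convex coefficients and $V_T\in\RR_{\geq 0}^{R\times F'}$ has entries $\psi_f(a)$ (nonnegative by the first step). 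Hence $\rank_+ M'_T\leq R$ for every $T>0$. Since $\{N:\rank_+ N\leq R\}$ is closed by Fact~\ref{fact:closed}, taking $T\to\infty$ gives $\rank_+ M'\leq R$, completing the proof.

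The main conceptual obstacle is the handling of option (2), where $M'_{v,f}$ is a \emph{difference} of slacks and so is not obviously realisable as the value of $\psi_f$ at any single point of $Q$. The idea of recovering $M'_{v,f}$ as a limit $\tfrac{1}{T}\psi_f(v+T(v-x_v))$ along the ray guaranteed by property (ii), and then invoking closedness of the nonnegative rank variety via Fact~\ref{fact:closed}, is what makes the lampshade construction usable in the presence of subtractions; verifying that every evaluation point stays in $Q$ and that $\psi_f$ has the correct sign there are the routine (but essential) geometric checks.
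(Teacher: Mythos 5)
Your proposal does not address Fact~\ref{fact:closed} at all: what you have written is a proof of Lemma~\ref{lem:shitov} (the bound $\rank_+M'\le R$ for the modified slack submatrix), and it explicitly \emph{invokes} Fact~\ref{fact:closed} as its final step (``Since $\{N:\rank_+ N\leq R\}$ is closed by Fact~\ref{fact:closed}, taking $T\to\infty$ gives $\rank_+ M'\leq R$''). Read as a proof of Fact~\ref{fact:closed} itself this is circular, and, circularity aside, nothing in your argument engages with the actual assertion, namely that $\mathcal M_r$ is closed in $\RR^{m\times n}$. Lemma~\ref{lem:cap-polyhedron}, the polyhedron $Q$, the slack functionals $\psi_f$, and the type-(1)/(2) case split all belong to the surrounding lemma, not to this fact. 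The paper does not prove the fact at all; it cites two references.

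What is actually needed is a compactness argument of the following kind. Suppose $M_k\in\mathcal M_r$ and $M_k\to M$. For each $k$ write $M_k=\sum_{i=1}^r t_i^{(k)} u_i^{(k)}$ as a sum of $r$ nonnegative rank-one matrices, where $t_i^{(k)}\in\RR_{\geq 0}^{m}$ is a column vector and $u_i^{(k)}\in\RR_{\geq 0}^{1\times n}$ is a row vector. Rescale within each pair so that $\|t_i^{(k)}\|_\infty\in\{0,1\}$, dropping any zero summands. Then every entry of each $t_i^{(k)}$ lies in $[0,1]$; and since some coordinate of $t_i^{(k)}$ equals $1$ while each rank-one summand is entrywise dominated by $M_k$ (all summands being nonnegative), every entry of $u_i^{(k)}$ is at most $\|M_k\|_\infty$, which is bounded because $M_k$ converges. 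Passing to a subsequence along which all $t_i^{(k)}$ and $u_i^{(k)}$ converge yields nonnegative limits $t_i$, $u_i$ with $M=\sum_i t_i u_i$, so $\rank_+ M\le r$ and $M\in\mathcal M_r$.
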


We are now ready to prove \cref{lem:shitov}.

\begin{proof}[Proof of \cref{lem:shitov}]
Let us choose the constant $R\in \NN$ as in \cref{lem:cap-polyhedron}. First note that the statement of \cref{lem:shitov} is trivially true if one of the sets $F'$ and $V'$ is empty (because then the matrix $M'$ is empty). We may therefore assume that there exists at least one facet $f\in F'$. Since $f$ is encapsulated by $X$, all vertices of $f$ lie in the solid cap $X$. This means that $V\cap X\neq \emptyset$. Similarly, since $V'\su V\cap Y$, we may assume that $V\cap Y\neq \emptyset$.

Let $\mathcal M^*$ be the set of matrices obtainable as follows. For each $v\in V'$, choose some $\alpha_v\in [0,1)$ and $x_v\in V\cap X$. Let $M^*$ be the $V'\times F'$ matrix with entries $M^*_{v,f}=M_{v,f}-\alpha_v M_{x_v,f}$ for all $f\in F'$. Note that the matrix $M'$ does not quite lie in the set $\mathcal M^*$, but it does lie in the \emph{closure} of $\mathcal M^*$ (we would like to take each $\alpha_v\in \{0,1\}$, but we are only allowing $\alpha_v\in[0,1)$). We will show that each $M^*\in \mathcal M^*$ has nonnegative entries and $\rank_+ M^*\le R$, so \cref{fact:closed} will imply that the same holds for $M'$, as desired.

The key is to interpret the entries of $M^*$ geometrically, as follows. For each $v\in V'\su Y$, define \[w_v=\frac{1}{1-\alpha_v}v-\frac{\alpha_v}{1-\alpha_v} x_v=v+\frac{\alpha_v}{1-\alpha_v} (v-x_v).\]
Every facet $f\in F'$ corresponds to some constraint $a_f\cdot x\leq b_f$ that is used for the slack matrix $M$; define the affine-linear function $\psi_f:x\in \RR^d\to \RR$ by $\psi_f(x)=b_f-a_f\cdot x$, measuring the slack of a point $x$ with respect to $f$. Note that for any $f\in F'$ and $v\in V'$ we have $\psi_f(w_v)=\frac{1}{1-\alpha_v}\psi_f(v)-\frac{\alpha_v}{1-\alpha_v}\psi_f(x_v)$, so
\begin{equation}\label{eq:M*-slack}
(M^*_{v,f})_{f\in F'}=(M_{v,f}-\alpha_v M_{x_v,f})_{f\in F'}=(\psi_f(v)-\alpha_v \psi_f(x_v))_{f\in F'}=(1-\alpha_v)(\psi_f(w_v))_{f\in F'}.
\end{equation}

Now, let $Q$ be the convex set guaranteed by \cref{lem:cap-polyhedron}. Condition (ii) implies that each $w_v\in Q$, so condition (iii) ensures the existence of a set $W\su Q$ of size $\vert W\vert\leq R$, with $w_v\in \conv(W)$ for each $v\in V'$. Also, condition (i) implies that for any point $w\in Q$, we have $\psi_f(w)\geq 0$ for all $f\in F'$. To see this, fix a facet $f\in F'$ and consider the hyperplane $H_f\su \RR^d$ through $f$. Since this hyperplane contains the facet $f$, it intersects the interior of the ball $B$. As $f\in F'$ is encapsulated by $X$, we have $H_f\cap B\su X$. Therefore, by condition (i) we have $H_f\cap Q=\emptyset$. Thus, the entirety of $Q$ lies on the same side of the hyperplane $H_f$. Since the set $Q$ contains all vertices in $V\cap Y\su P$ (and $V\cap Y\neq \emptyset$ and $V\cap Y$ is disjoint from $H_f$), $Q$ lies on the same side of the hyperplane $H_f$ as our polytope $P$ does. So we indeed have $\psi_f(w)\geq 0$ for all $f\in F'$ and all $w\in Q$.

For each $w\in W\subseteq Q$, define the vector $u_w=(\psi_f(w))_{f\in F'}$, which by the previous paragraph has nonnegative entries. For each $v\in V'$, since $w_v\in \conv(W)$, the vector $(\psi_f(w_v))_{f\in F'}$ can be written as a convex combination of our vectors $u_w$. So, by \cref{eq:M*-slack}, each row $(M^*_{v,f})_{f\in F'}$ of $M^*$ can be written as a nonnegative linear combination of these vectors $u_w$. It follows that the entries of $M^*$ are nonnegative and that $\rank_+ M^*\leq \vert W\vert\leq R$, as desired.\end{proof}

\section{Upper-bounding the extension complexity of random polytopes}
\label{sec:upper-bound}

In this section we give a unified proof of the upper bounds in \cref{thm:sphere,thm:ball}. Let $\varepsilon=n^{-1/(2(d-1))}$, and assume that $n$ is sufficiently large (or, in the setting of \cref{thm:ball}, that $m$ is sufficiently large) such that $\eps<\pi/50$.

Let $A\su S$ be a collection of $O(\eps^{1-d})=O(\sqrt{n})$ points on the sphere $S$ with the properties in \cref{lem:covering-packing}. In particular, by property (III), every solid cap of radius $\eps/2$ is contained in a solid cap of radius $\eps$ centred at some point in $A$. Furthermore, by \cref{lem:colouring-point-collection} we can colour the points in $A$ with $\chi=O(1)$ colours such that any two points of the same colour have spherical distance at least $30\eps$. For $c=1,\dots,\chi$, let $A_c\su A$ be the set of points in $A$ with colour $c$ (then $A_1\cup\dots\cup A_c$ is a partition of $A$).

Recall that $P$ is a random polytope, given as the convex hull of $n$ random points on $S$ (in the setting of \cref{thm:sphere}) or as the convex hull of $m$ random points on $S$ (in the setting of \cref{thm:ball}). By \cref{lem:cap-diameter-sphere} or \cref{lem:cap-diameter-ball}, a.a.s.\ every facet of $P$ is encapsulated in some solid cap of radius $n^{-1/(d-1)}\log^2 n\le\varepsilon/2$, and is therefore also encapsulated in a solid cap of radius $\eps$ centred at some point in $A$. Furthermore, by \cref{lem:sphere-distributed} or \cref{lem:ball-distributed}, each solid cap of radius $5\eps$ centred at some point in $A$ contains with probability $1-n^{-\omega(1)}$ at most $O(\eps^{d-1}n)=O(\sqrt n)$ vertices of $P$. Since $\vert A\vert\leq O(\sqrt{n})$, we can conclude that a.a.s\ all solid caps of radius $5\eps$ centred at the points in $A$ contain each at most $O(\sqrt n)$ vertices of $P$. We will show that under these conditions we have $\xc(P)\leq O(\sqrt{n})$.

Let $V$ and $F$ be the sets of vertices and facets of $P$, and choose a partition $F=F_{1}\cup \dots \cup F_{\chi}$, such that for $c=1,\dots, \chi$ every facet in the set $F_{c}$ is encapsulated in a solid cap of radius $\eps$ centred at some point in $A_c$. For $c=1,\dots, \chi$, let $V_{c}\su V$ be the set of vertices of $P$ which
are contained in a solid cap of radius $5\varepsilon$ centred at some point in $A_c$, and let $W_c=V\setminus V_c$. Note that the vertices $w\in W_c$ are far away from the points $a\in A_c$: for each $w\in W_c$ and each $a\in A_c$, the vertex $w$ is contained in the convex hull of all points on $S$ with spherical distance at least $5\eps$ from $a$.

Now, consider a slack matrix $M$ of the polytope $P$ with rows indexed by $V$ and columns indexed by $F$. We partition $M$ into $2\chi$ submatrices $M[V_1,F_1],\dots,M[V_\chi,F_\chi]$ and $M[W_1,F_1],\dots,M[W_\chi,F_\chi]$, where for subsets $V'\su V$ and $F'\su F$, by $M[V',F']$ we denote the $V'\times F'$ submatrix of $M$ containing the slacks between vertices in $V'$ and facets in $F'$. For the rest of the proof, our goal will be to show that each of these $2\chi=O(1)$ submatrices have nonnegative rank $O(\sqrt n)$. This will imply that $\rank_+ M\leq O(\sqrt{n})$, which is equivalent to the desired statement $\xc(P)\leq O(\sqrt{n})$.

Fix $c\in \lbrace 1,\dots, \chi\rbrace$. Our goal is to show that $\rank_+ M[W_c,F_c]=O(\sqrt{n})$ and $\rank_+ M[V_c,F_c]=O(\sqrt{n})$.

For every point $a\in A_c$ let $F^a\su F_c$ be the set of facets that are encapsulated in the solid cap of radius $\eps$ centred at $a$. Since all points in $A_c$ have spherical distance at least $30\eps$ from each other, these sets $F^a$ are disjoint, so they form a partition of $F_c$. Furthermore, let $R$ be the constant in \cref{lem:shitov}.
 
First, we consider the matrix $M[W_c,F_c]$, which is somewhat
simpler to handle than $M[V_c,F_c]$. For each $a\in A_c$ we wish to apply \cref{lem:shitov} to the matrix $M[W_c,F^a]$. As in the statement of \cref{lem:shitov}, let $X$ be the solid cap of radius $\eps$ centred at $a$, and let $Y$ be the convex hull of all points on $S$ with spherical distance at least $5\varepsilon$ from $a$. Then, all facets in $F_a$ are encapsulated by $X$, and $W_c\su Y$. All the rows of $M[W_c,F^a]$ satisfy condition (1) in \cref{lem:shitov}, so we obtain that $\rank_+ M[W_c,F^a]\leq R$. Since the matrices $M[W_c,F^a]$, for $a\in A_c$, partition the matrix $M[W_c,F_c]$, it follows that $\rank_+ M[W_c,F_c]\le |A_c|\cdot R\le |A|\cdot R=O(\sqrt n)$.

It remains to consider the matrix $M[V_c,F_c]$. For every point $a\in A_c$ let $V^a\su V_c$ be the set of vertices lying in the solid cap of radius $5 \eps$ centred at $a$. Since all  points in $A_c$ have spherical distance at least $30\eps$ from each other, these sets $V^a$ partition $V_c$. We are assuming that each solid cap of radius $5\eps$ centred at some point $a\in A$ contains $O(\sqrt n)$ vertices, so we have $|V^a|=O(\sqrt n)$ for each $a\in A_c$. Furthermore, for any \emph{distinct} elements $a,a'\in A_c$, every vertex in $V^a$ is in the convex hull of all points on $S$ with spherical distance at least $25\eps$ from $a'$.

Let $N=\max_{a\in A_c}\vert V^a\vert=O(\sqrt n)$. Furthermore, let us fix a function $\phi:V_c\to \{1,\dots,N\}$ such that for each $a\in A_c$ the restriction $\phi|_{V^a}$ of $\phi$ to $V^a$ is a bijection $\phi|_{V^a}:V^a\to \{1,\dots,|V^a|\}$ (we can choose such a function $\phi$ by choosing bijections $V^a\to \{1,\dots,|V^a|\}$ separately for each $a\in A_c$, recalling that the sets $V^a$ form a partition of $V_c$). We can think of this function $\phi$ as a ``labelling'' that assigns each each vertex $v\in V^a$ a unique label in $\{1,\dots,|V^a|\}$. Now, for $i=1,\dots,N$ let us define a nonnegative vector $t^{(i)}=(t^{(i)}_f)_{f\in F_c}$, with entries indexed by facets $f\in F_c$. For every $a\in A_c$, and every facet $f\in F^a$, let us define the entry $t^{(i)}_f$ as follows.
If $i\leq \vert V^a\vert$, let $v$ be the unique vertex in $V^a$ with $\phi(v)=i$, and define $t^{(i)}_f=M_{v,f}$ to be the slack of the vertex $v$ with respect to the facet $f$. Otherwise, if $i> \vert V^a\vert$, define $t^{(i)}_f=0$.

Now, let $K$ be the $V_c\times F_c$ matrix defined by $K_{v,f}=M_{v,f}-t^{(\phi(v))}_f$ for all $v\in V_c$ and $f\in F_c$. In other words, $K$ is obtained from $M[V_c,F_c]$ by subtracting the vector $t^{(\phi(v))}$ from the row of $M[V_c,F_c]$ with index $v$, for each $v\in V_c$. The purpose of this definition is that for any $a\in A_c$, any vertex $v\in V^a$, and any facet $f\in F^a$, we have $K_{v,f}=M_{v,f}-t^{(\phi(v))}_f=0$. That is to say, for each $a\in A_c$ all entries of the submatrix $K[V^a, F^a]$ are zero.

\begin{claim}\label{claim-matrix-K}
The matrix $K$ has nonnegative entries and satisfies $\rank_+ K\leq \vert A_c\vert\cdot R$.
\end{claim}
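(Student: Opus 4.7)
The plan is to prove the claim by partitioning the columns of $K$ according to which ``patch'' $F^{a'}$ they belong to, and applying \cref{lem:shitov} to each block $K[V_c\setminus V^{a'},F^{a'}]$ (the rows indexed by $V^{a'}$ will already be zero by construction). The key point is to match the parameters in \cref{lem:shitov} with the present setup: a facet in $F^{a'}$ is only guaranteed to be encapsulated by the radius-$\eps$ cap around $a'$, but the vertices in $V^{a'}$ (which we need to play the role of the $x_v$'s in condition (2) of \cref{lem:shitov}) are only known to lie in the radius-$5\eps$ cap. So I would invoke \cref{lem:shitov} with the parameter $\tilde\eps = 5\eps$ in place of $\eps$ (valid since $\eps<\pi/50$ implies $5\eps<\pi/5$) and with centre point $a=a'$. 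Then the set $X$ in the lemma becomes the solid cap of radius $5\eps$ around $a'$, which contains $V^{a'}$ and still encapsulates every facet in $F^{a'}$, while $Y$ becomes the convex hull of points on $S$ at spherical distance at least $25\eps$ from $a'$.

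Fix $a'\in A_c$. For every $v\in V_c\setminus V^{a'}$, we have $v\in V^a$ for some $a\in A_c$ with $a\neq a'$, and by the property noted earlier in the text ``for any distinct elements $a,a'\in A_c$, every vertex in $V^a$ is in the convex hull of all points on $S$ with spherical distance at least $25\eps$ from $a'$'' we conclude $v\in Y$. So $V_c\setminus V^{a'}$ is a valid choice for the set $V'$ in \cref{lem:shitov}. To verify the per-row conditions of the lemma, fix such a $v$ and examine the unique value $i=\phi(v)$. If $i>|V^{a'}|$, then $t^{(i)}_f=0$ for all $f\in F^{a'}$ by definition, so $K_{v,f}=M_{v,f}$ and condition (1) holds. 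Otherwise, there is a unique $w\in V^{a'}$ with $\phi(w)=i$, and by definition $t^{(i)}_f=M_{w,f}$ for all $f\in F^{a'}$, so $K_{v,f}=M_{v,f}-M_{w,f}$, and since $w\in V^{a'}\subseteq V\cap X$, condition (2) holds with $x_v=w$.

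Thus \cref{lem:shitov} applies and yields that $K[V_c\setminus V^{a'},F^{a'}]$ is nonnegative with nonnegative rank at most $R$. Combined with the observation that $K[V^{a'},F^{a'}]=0$ (by the very construction of the vectors $t^{(i)}$), we conclude that $K[V_c,F^{a'}]$ is nonnegative and has $\rank_+ K[V_c,F^{a'}]\leq R$. Since the sets $F^{a'}$ partition $F_c$ as $a'$ ranges over $A_c$, the full matrix $K$ is nonnegative and satisfies
\[
\rank_+ K \;\leq\; \sum_{a'\in A_c}\rank_+ K[V_c,F^{a'}] \;\leq\; |A_c|\cdot R,
\]
which is the claim. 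The main (though minor) obstacle is simply bookkeeping: making sure that the rescaled parameter $\tilde\eps=5\eps$ simultaneously makes the lampshade's ``inside'' cap large enough to contain all the subtracted rows $V^{a'}$, while keeping the ``outside'' set $Y$ large enough to contain all the vertices of $V_c\setminus V^{a'}$, which is exactly why the $30\eps$-separation of the coloured points in $A_c$ was arranged in advance.
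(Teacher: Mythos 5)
Your proof is correct and follows essentially the same route as the paper: partition by the patches $F^{a'}$, apply \cref{lem:shitov} with radius $5\eps$ so the ``inside'' cap $X$ contains all of $V^{a'}$ while the $30\eps$-separation places $V_c\setminus V^{a'}$ inside $Y$, verify conditions (1)/(2) according to whether $\phi(v) > |V^{a'}|$ or not, and sum over $a'\in A_c$. (Incidentally, you assign conditions (1) and (2) correctly here, whereas the paper's own write-up appears to transpose them in a small typo.)
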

\cref{claim-matrix-K} implies that there is a collection of $\vert A_c\vert\cdot R$ nonnegative vectors such that each row of $K$ can be written as a nonnegative linear combination of these vectors. But then, using these $\vert A_c\vert\cdot R$ vectors together with the $N$ vectors $t^{(1)},\dots,t^{(N)}$, we can obtain any row of $M[V_c,F_c]$ as a nonnegative linear combination. Thus, we obtain $\rank_+ M[V_c,F_c]\leq \vert A_c\vert\cdot R+N\leq \vert A\vert\cdot R+N=O(\sqrt n)$,
as desired. It only remains to prove \cref{claim-matrix-K}.

\begin{proof}[Proof of \cref{claim-matrix-K}]
We proceed in a similar way to the proof of the bound $M[W_c,F_c]=|A_c|\cdot R$ earlier in this section. Since the sets $F^a$ partition $F_c$, it suffices to prove that for each $a\in A_c$, the submatrix $K[V_c,F^a]$ is nonnegative and has nonnegative rank at most $R$. Since all the entries of $K[V^a,F^a]$ are zero, we actually only need to consider submatrices of the form $K[V_c\setminus V^a,F^a]$.

So let us fix $a\in A_c$. We will now apply \cref{lem:shitov} to show that $K[V_c\setminus V^a,F^a]$ has nonnegative entries and satisfies $\rank_+ K[V_c\setminus V^a,F^a]\leq R$. As in the statement of \cref{lem:shitov} applied with the point $a$ and the radius $5\eps$, let $X$ be the solid cap of radius $5\eps$ centred at $a$, and let $Y$ be the convex hull of all points on $S$ with spherical distance at least $25\eps$ from $a$. Each facet in $F^a$ is encapsulated by $X$, and since $V_c\setminus V^a$ is the union of the sets $V_{a'}$ for $a'\in A_c\setminus \lbrace a\rbrace$, all vertices in $V_c\setminus V^a$ are contained in $Y$.

Now, let us check that $K[V_c\setminus V^a,F^a]$ satisfies for each $v\in V_c\setminus V^a$ at least one of the conditions (1) and (2) in \cref{lem:shitov}. Note that for all $f\in F^a$ we have $K[V_c\setminus V^a,F^a]_{v,f}=K_{v,f}=M_{v,f}-t^{(\phi(v))}_f$. If $\phi(v)\leq \vert V^a\vert$, then there is a vertex $x_v\in V^{a}$ with $\phi(x_v)=\phi(v)$ and we have $t^{(\phi(v))}_f=M_{x_v,f}$ for all $f\in F^a$, so condition (1) holds. Otherwise, if $\phi(v)> \vert V^a\vert$, then $t^{(\phi(v))}_f=0$ for all $f\in F^a$, so condition (2) holds. Thus, all conditions of \cref{lem:shitov} are satisfied and we obtain that the matrix $K[V_c\setminus V^a,F^a]$ has nonnegative entries and satisfies $\rank_+ K[V_c\setminus V^a,F^a]\leq R$. This finishes the proof of \cref{claim-matrix-K}.\end{proof}

\section{Construction of the polyhedral lampshade \texorpdfstring{$\boldsymbol{Q}$}{Q}}
\label{sect-cone}

Here we prove \cref{lem:cap-polyhedron}. So let us fix $d\geq 2$. We start with an auxiliary construction (which determines the value of $R$ in \cref{lem:cap-polyhedron}).

\begin{fact}\label{fact:ball-polytope}
Let $B^{d-1}_1,B^{d-1}_{1/2}\su \RR^{d-1}$ be concentric $(d-1)$-dimensional balls, both centred at the origin, with radii $1$ and $1/2$ respectively. Let $S^{d-2}_1$ be the boundary of $B^{d-1}_1$ (so $S^{d-2}_1$ is a $(d-2)$-dimensional unit sphere). Then there is a convex polytope $P\subseteq B^{d-1}_1$ with vertices lying on $S_1^{d-2}$, whose interior contains the smaller ball $B^{d-1}_{1/2}$.
\end{fact}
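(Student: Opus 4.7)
My plan is to construct $P$ as the convex hull of a finite $\delta$-net on $S^{d-2}_1$, for a suitably chosen $\delta < 1$. Concretely, I would pick a finite set $N \subseteq S^{d-2}_1$ such that every point of $S^{d-2}_1$ lies within Euclidean distance $\delta$ of some point of $N$ (such a finite net exists because $S^{d-2}_1$ is compact), and set $P = \conv(N)$.

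Two of the three required properties are essentially automatic. Since $B^{d-1}_1$ is convex and contains $N$, we have $P \subseteq B^{d-1}_1$. And every vertex of $P$ must be an extreme point of its defining set $N$, so all vertices of $P$ lie in $N \subseteq S^{d-2}_1$.

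The only substantive step is showing that $B^{d-1}_{1/2}$ lies in the interior of $P$, which I would do by bounding from below the distance from the origin to each supporting hyperplane of $P$. Given a supporting hyperplane with outward unit normal $v$, its distance to the origin equals $\max_{q \in N} v \cdot q$. Since $v \in S^{d-2}_1$, the net property supplies some $q \in N$ with $|v - q| \le \delta$, and the identity $v \cdot q = 1 - \tfrac12 |v - q|^2$ (valid because $|v|=|q|=1$) then yields $v \cdot q \ge 1 - \delta^2/2$. Choosing $\delta$ strictly less than $1$ (say $\delta = 1/2$) therefore makes every supporting hyperplane of $P$ farther than $1/2$ from the origin, so $B^{d-1}_{1/2}$ lies strictly in the interior half-space of each such hyperplane and hence in the interior of $P$.

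I do not anticipate any real obstacle; the construction is a textbook polyhedral approximation of the ball, and the only thing requiring a moment's care is the quantitative inequality linking $\delta$ to the inradius $1/2$. Full-dimensionality of $P$ is a free byproduct of containing $B^{d-1}_{1/2}$, so there is no need to verify it separately.
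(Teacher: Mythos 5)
Your proof is correct and follows essentially the same approach as the paper, which simply takes $P$ to be the convex hull of closely-spaced points on $S^{d-2}_1$ so that $P$ approximates $B^{d-1}_1$ closely enough to contain $B^{d-1}_{1/2}$. You make the quantitative step explicit via the support-function bound $h_P(v)\ge 1-\delta^2/2$, which is a clean way to spell out what the paper leaves implicit.
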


\begin{proof}
We can choose $P$ to be the convex hull of some closely-spaced points on $S^{d-2}_1$, so that $P$ closely approximates $B^{d-1}_1$ (and therefore contains $B^{d-1}_{1/2}$). More precisely, let $\delta>0$ be small enough such that a solid cap of radius $\delta$  in the ball $B_1^{d-1}$ is disjoint from $B_{1/2}^{d-1}$, and consider a covering of the sphere $S_1^{d-2}$ by a finite collection of spherical caps of radius $\delta$. Then the polytope $P$ whose vertices are the centers of these spherical caps has the desired property.
\end{proof}

Let us fix a polytope $P$ as in \cref{fact:ball-polytope}, and let $R$ be twice the number of vertices in of $P$.

We are now ready to construct our convex set $Q$. Recall that $a\in S$ is a point on the unit sphere, and $X\su B$ is the solid cap of radius $\eps$ centred at $a$. Let $Z\su S$ be the subset of all points on the sphere whose spherical distance to $a$ is exactly $2\eps$. Then $Z$ is a $(d-2)$-dimensional sphere and its convex hull $\conv(Z)$ is a $(d-1)$-dimensional ball (given as the intersection of the unit ball $B\su \RR^d$ with some affine hyperplane $H_Z$). Denote the centre and radius of the $(d-1)$-dimensional ball $\conv(Z)$ by $b_Z$ and $r_Z$. Now, by rescaling and translating the polytope $P$ in \cref{fact:ball-polytope}, we can obtain a convex polytope $P_Z\su H_Z$ with $R/2$ vertices, such that all vertices of $P_Z$ are in $Z$, and such that $P_Z$ contains the $(d-1)$-dimensional ball in $H_Z$ centred at $b_Z$ with radius $r_Z/2$.

We can now define $Q$ as
\[Q:=\left\lbrace z+t(z-a)\mid z\in P_Z,\, t\in \RR_{\geq 0}\right\rbrace\]
In other words, $Q$ consists of all points obtained as follows. For any point $z\in P_Z$, we draw the line through $a$ and $z$ and consider all points on this line that are on the other side of $z$ from $a$ (these points form a ray starting at $z$ pointing away from $a$). The union of all these rays for all $z\in P_Z$ is the set $Q$.

It is not hard to see that $Q$ is indeed a convex set. Indeed, it is an (unbounded) $d$-dimensional polyhedron, given as an intersection of finitely many half-spaces: let $\mathcal{H}$ be the set consisting of the hyperplane $H_Z$ as well as, for each facet $f$ of the $(d-1)$-dimensional polytope $P_Z$, the hyperplane passing through $a$ and $f$. Then $Q$ is the intersection of finitely many (closed) half-spaces bounded by the hyperplanes in $\mathcal{H}$. Note that in particular all points of $Q$ lie in the (closed) half-space bounded by the hyperplane $H_Z$ not containing $a$. We now need to check that $Q$ satisfies conditions (i) to (iii) in \cref{lem:cap-polyhedron}.

Let us start by checking condition (iii). Consider a finite set of points $A\su Q$. All the points of $A$ lie in the (closed) half-space bounded by the hyperplane $H_Z$ and not containing $a$. We can now find a hyperplane $H_Z'$ parallel to $H_Z$, which is sufficiently far away from $H_Z$ such that all points in $A$ lie between the hyperplanes $H_Z$ and $H_Z'$ (or lie on $H_Z$ and $H_Z'$ themselves). Let $Q_A$ be the set of all points in $Q$ lying between the hyperplanes $H_Z$ and $H_Z'$ (or lying on $H_Z$ and $H_Z'$ themselves). Then $A\su Q_A$, so it suffices to show that $Q_A$ can be obtained as the convex hull of $R$ points in $Q$.

Indeed, consider the $R/2$ vertices of the polytope $P_Z$ (these vertices also lie in $Q_A\su Q$). Furthermore, for each vertex $z$ of $P_Z$, consider the intersection of the line through $a$ and $z$ with the hyperplane $H_Z'$ (this intersection is another point in $Q_A\su P$). By taking these intersection points for all $R/2$ vertices $z$ of  $P_Z$, we obtain $R/2$ additional points. All in all, this gives $R$ points in $Q_A\su Q$. It is not hard to see that the convex hull of these $R$ points is indeed the entire set $Q_A$. This establishes condition (iii).

In order to establish conditions (i) and (ii), the following lemma will be useful. Recall that $S$ is the boundary of the unit ball $B\su \RR^d$.

\begin{lem}\label{lem-hyperplane-ray}
Let $H\su \RR^d$ be a hyperplane intersecting the interior of the unit ball $B\su \RR^d$. Then $H$ cuts the ball $B$ into two solid caps. Let $q\in S$ be the centre of one of these two solid caps (this means $q$ is one of the two intersection points of $S$ with the line orthogonal to $H$ through the centre of the ball $B$). Suppose $U$ and $W$ are subsets of the unit sphere $S$ satisfying the following two assumptions.
\begin{itemize}
    \item[(a)] For each point $u\in U$ and each point $w\in W$ the spherical distance from $u$ to $q$ is at most the spherical distance from $w$ to $q$.
    \item[(b)] All points $w\in W$ lie on the opposite side of $H$ from the point $q$ (and do not lie on $H$ itself).
\end{itemize}
Let $u'\in \conv(U)$ and $w'\in \conv(W)$, and consider the ray $\left\lbrace w'+t(w'-u')\mid t\in \RR_{\geq 0}\right\rbrace$ along the line through $u'$ and $w'$ starting at $w'$ and pointing away from $u'$. Then this ray is disjoint from the hyperplane $H$ and lies on the other side of $H$ from the point $q$.
\end{lem}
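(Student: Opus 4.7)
The plan is to exploit the fact that, by definition, the point $q$ lies on the line through the centre of $B$ orthogonal to $H$, and is therefore (since $\|q\|=1$) a unit normal to $H$. So we may write $H = \{x \in \RR^d : \langle x, q\rangle = c\}$ for some $c \in (-1,1)$, with the side of $H$ containing $q$ given by $\{x : \langle x, q\rangle > c\}$ (indeed $\langle q,q\rangle = 1 > c$) and the opposite side given by $\{x : \langle x, q\rangle < c\}$. The whole argument then reduces to tracking the values of the linear functional $\langle \cdot, q\rangle$.

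The key observation is that for any $x \in S$, the spherical distance from $x$ to $q$ equals $\arccos(\langle x, q\rangle)$, which is a strictly decreasing function of $\langle x, q\rangle$. Hence hypothesis (a) becomes $\langle u, q\rangle \geq \langle w, q\rangle$ for all $u \in U$ and $w \in W$; setting $\alpha = \sup_{w \in W}\langle w, q\rangle$, this says $\langle u, q\rangle \geq \alpha$ for every $u \in U$, and hence (by linearity of $\langle \cdot, q\rangle$, applied to finite convex combinations) also $\langle u', q\rangle \geq \alpha$ for every $u' \in \conv(U)$. Similarly, hypothesis (b) gives $\langle w, q\rangle < c$ for all $w \in W$, and taking convex combinations gives both $\langle w', q\rangle < c$ and $\langle w', q\rangle \leq \alpha$ for every $w' \in \conv(W)$. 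In particular, $\langle u', q\rangle \geq \langle w', q\rangle$.

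It then remains to evaluate $\langle \cdot, q\rangle$ along the ray. For any $t \geq 0$,
\[
\langle w' + t(w' - u'), q\rangle = \langle w', q\rangle + t\bigl(\langle w', q\rangle - \langle u', q\rangle\bigr) \leq \langle w', q\rangle < c,
\]
since the coefficient of $t$ is non-positive. Thus every point of the ray lies in $\{x : \langle x, q\rangle < c\}$, that is, strictly on the side of $H$ opposite to $q$, which is the conclusion of the lemma (the reference to ``the point $p$'' in the statement is evidently a typo for $q$).

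There is no real obstacle: the crux is simply to recognise that $q$ is a canonical unit normal to $H$, so that both geometric hypotheses convert into linear inequalities in $\langle \cdot, q\rangle$, and these inequalities are preserved both by taking convex combinations and by pushing out along the ray away from $u'$.
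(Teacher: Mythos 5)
Your proof is correct and follows essentially the same route as the paper's: recognise $q$ as a unit normal to $H$, convert assumption (a) via monotonicity of $\cos$ (equivalently $\arccos$) into the linear inequality $q\cdot u\geq q\cdot w$, pass to convex hulls, and observe that moving along the ray only decreases $q\cdot x$. Your introduction of $\alpha=\sup_{w\in W}\langle w,q\rangle$ is a slightly more explicit way of justifying the passage from pointwise inequalities on $U\times W$ to inequalities on $\conv(U)\times\conv(W)$, and your remark that ``$p$'' in the lemma statement should read ``$q$'' is accurate.
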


\begin{proof}
Recall that the unit ball $B$ is centred at the origin. Therefore the hyperplane $H$ is given by an equation of the form $q\cdot x = d$ for some $d\in (-1,1)$ (note that $q$ is a normal vector for $H$). For any $x\in S$, the spherical distance $\theta\in [0,\pi]$ between $x$ and $q$ satisfies $\cos \theta= q\cdot x$. Assumption (a), and the fact that $\cos$ is a monotone decreasing function on the interval $[0,\pi]$, therefore imply that $q\cdot u\geq q\cdot w$ for all $u\in U$ and $w\in W$. Hence $q\cdot u'\geq q\cdot w'$ for all $u'\in \conv(U)$ and $w'\in \conv(W)$. Furthermore, assumption (b) means that we have $q\cdot w<d$ for all $w\in W$, and consequently $q\cdot w'<d$ for all $w'\in \conv(W)$. Now, let $u'\in \conv(U)$ and $w'\in \conv(W)$, and consider any point $x$ of the form $x=w'+t(w'-u')$ for some $t\in \RR_{\geq 0}$. Then
\[q\cdot x= q\cdot w'+t(q\cdot w'-q\cdot u')\leq q\cdot w'<d.\]
This shows that all points $x\in \left\lbrace w'+t(w'-u')\mid t\in \RR_{\geq 0}\right\rbrace$ are indeed on the other side of $H$ from the point $q$ (and not on $H$ itself).
\end{proof}

We take a moment to recall that $X\su B$ is the solid cap with radius $\eps$ centred at $a$, that $Y$ is the convex hull of the set of all points on $S$ with spherical distance at least $5\eps$ from $a$, and that $Z\su S$ is the set of all points on $S$ with spherical distance exactly $2\eps$ from $a$. In particular, note that $Z\cap X=\emptyset$, and that all points of $Z$ have spherical distance at least $\eps$ from all points in $X\cap S$.

We next check that $Q$ satisfies condition (i) using \cref{lem-hyperplane-ray}. Let $H\su \RR^d$ be an affine hyperplane such that $H$ intersects the interior of $B$ and such that $H\cap B\su X$. Then one of the two solid caps into which $H$ divides the ball $B$ (namely, the smaller of the two) is a subset of $X$. Let $q\in X\cap S$ be the centre of this solid cap cut out by $H$. We wish to apply \cref{lem-hyperplane-ray} with $U=\lbrace a\rbrace$ and $W=Z$ (and $q$ as we have just defined). Since $q\in X\cap S$, the spherical distance between $q$ and $a$ is at most $\eps$, and the spherical distance between $q$ and any point in $Z$ is at least $\eps$, so assumption (a) in \cref{lem-hyperplane-ray} is satisfied. Furthermore, assumption (b) is satisfied basically by definition: $H$ divides $B$ into two solid caps, one of which contains $q$, and the other of which contains all points in $B\setminus X$, including all points in $W=Z$ (and no point of $W=Z$ lies on $H$ itself). So, \cref{lem-hyperplane-ray} says that for any point $z\in P_Z\su \conv(Z)=\conv(W)$, the ray $\left\lbrace z+t(z-a)\mid t\in \RR_{\geq 0}\right\rbrace$ is disjoint from $H$. Since $Q$ is, by definition, the union of all these rays, $Q$ is disjoint from $H$. Thus, $Q$ satisfies condition (i) in \cref{lem:cap-polyhedron}.

It remains to check that $Q$ satisfies condition (ii). Recall that $Q$ can be expressed as the intersection of finitely many half-spaces, and that $\mathcal H$ is the collection of hyperplanes bounding these half-spaces. Note that the origin $b=0$ (which is the centre of the ball $B$) lies in the interior of $Q$. Indeed, recall that $b_Z$ is the centre of the $(d-1)$-dimensional ball formed by $\conv(Z)$, so by construction it lies in the interior of the $(d-1)$-dimensional polytope $P_Z$. Since $\eps<\pi/5<\pi/4$, this point $b_Z$ lies on the line segment between the points $a$ and $b$, so $b$ is of the form $b_Z+t(b_Z-a)$ for some $t>0$. So $b$ is indeed in the interior of $Q$, and consequently $Q$ is the intersection of all the (closed) half-spaces containing $b$ that are bounded by a hyperplane $H\in \mathcal{H}$.

\begin{claim}\label{claim-cone-geometry}
Let $H\in \mathcal{H}$, and consider the two solid caps into which the hyperplane $H$ cuts the ball $B$. Let $q\in S$ be the centre of the solid cap not containing the centre $b$ of the ball (i.e.\ the smaller cap). Then $q$ has spherical distance at most $2\eps$ from $a$.
\end{claim}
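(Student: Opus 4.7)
The plan is to split $\mathcal{H}$ into two kinds of hyperplanes and treat each separately. Place coordinates so that $a=(0,\dots,0,1)$; then $H_Z$ is the horizontal hyperplane $\{x_d=\cos(2\eps)\}$, the ball $\conv(Z)$ is the $(d-1)$-dimensional disk of radius $r_Z=\sin(2\eps)$ at height $\cos(2\eps)$, and $b_Z=(0,\dots,0,\cos(2\eps))$.

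First, if $H=H_Z$, then $H$ is orthogonal to the segment from the origin to $a$. The cap with heights in $[\cos(2\eps),1]$ contains $a$ but not $b=0$, and its centre is $a$ itself. So the spherical distance from $q=a$ to $a$ is $0\le 2\eps$, which handles this case.

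Next, consider the other hyperplanes $H\in\mathcal{H}$, namely those joining $a$ to a facet $f$ of $P_Z$. Let $\tilde f\su H_Z$ be the $(d-2)$-dimensional affine hyperplane of $H_Z$ supporting $f$. Since $P_Z$ was constructed so as to contain the ball in $H_Z$ of radius $r_Z/2=\sin(2\eps)/2$ around $b_Z$, the distance (within $H_Z$) from $b_Z$ to $\tilde f$ is some $c\ge \sin(2\eps)/2$. After a rotation fixing the vertical axis we may assume $\tilde f=\{x_1=c,\,x_d=\cos(2\eps)\}$. A direct computation (the normal to $H$ must be orthogonal to $\tilde f$ within $H_Z$ and to $a-p$ for $p\in\tilde f$) shows that the normal to $H$ through the origin lies in the $2$-dimensional plane $\Pi=\{x_2=\dots=x_{d-1}=0\}$. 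Thus the centre $q$ of the smaller cap lies in $\Pi$, and so does $a$, so the spherical distance between $q$ and $a$ can be computed inside the great circle $\Pi\cap S$.

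In the $(x_1,x_d)$-coordinates on $\Pi$, the line $H\cap\Pi$ passes through $(0,1)$ and $(c,\cos(2\eps))$ and has equation $(1-\cos(2\eps))x_1+cx_d=c$. Hence the centre of the smaller cap is the unit vector
\[
q=\frac{1}{\sqrt{(1-\cos(2\eps))^2+c^2}}\bigl(1-\cos(2\eps),\,c\bigr),
\]
and the spherical distance from $q$ to $a=(0,1)$ equals $\arccos\!\bigl(c/\sqrt{(1-\cos(2\eps))^2+c^2}\bigr)$. Squaring and using $\sin^2(2\eps)=(1-\cos(2\eps))(1+\cos(2\eps))$, the desired inequality $q\cdot a\ge\cos(2\eps)$ reduces to $c\ge \sin(2\eps)\cos(2\eps)/(1+\cos(2\eps))$. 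Since $\cos(2\eps)/(1+\cos(2\eps))\le 1/2$ and $c\ge\sin(2\eps)/2$, this inequality holds. The main obstacle I anticipate is setting up the $2$-dimensional reduction cleanly; once it is in place, the lower bound $c\ge r_Z/2$ coming from the inner ball in $P_Z$ turns out to be exactly what is needed for the final trigonometric inequality.
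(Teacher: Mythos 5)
Your proposal is correct. The overall structure---dispose of $H = H_Z$ trivially, then for each remaining $H\in\mathcal{H}$ reduce to a two-dimensional plane through $b$, $a$ and $q$, and invoke the inner ball of radius $r_Z/2$ in $P_Z$ to lower-bound the distance $c$ from $b_Z$ to the supporting hyperplane of the facet---is the same as in the paper. Where you diverge is in the finish. The paper argues synthetically in the plane $A$: it shows via a perpendicular-bisector and orthogonal-projection argument that the second intersection $s$ of $H$ with the circle $S\cap A$ has arc-distance at most $4\eps$ from $a$ (treating separately the possibility that $a$ and $s$ lie on the same side of the chord $zz'$), and then halves this to bound the distance to $q$. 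You instead compute the unit normal of $H\cap\Pi$ in coordinates, read off
\[
q\cdot a=\frac{c}{\sqrt{(1-\cos 2\eps)^2+c^2}},
\]
and reduce the claim to the scalar inequality $c\geq \sin(2\eps)\cos(2\eps)/(1+\cos 2\eps)$, which follows from $c\geq r_Z/2=\sin(2\eps)/2$ because $\cos(2\eps)/(1+\cos 2\eps)\leq 1/2$. This is a cleaner route: it avoids the paper's case distinction and the intermediate $4\eps$ bound, gives the sharp formula for the spherical distance directly, and makes explicit how the factor $1/2$ from \cref{fact:ball-polytope} enters (indeed, as $\eps\to 0$ your inequality becomes tight, showing the constant $1/2$ in $P_Z\supseteq B^{d-1}_{1/2}$ is exactly what the claim needs). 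One small point worth spelling out in a final write-up is the observation that the normal to $H$ is orthogonal to $e_2,\dots,e_{d-1}$ and hence lies in $\Pi$, so that $q\in\Pi$ and the computation is legitimately two-dimensional; you state this but it is the load-bearing step of the reduction.
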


Before proving \cref{claim-cone-geometry} at the end of this subsection, we show how to use the claim to check that $Q$ satisfies condition (ii). Consider any hyperplane $H\in \mathcal{H}$, let $q$ be as in \cref{claim-cone-geometry}, and note that then $q$ and $b$ lie on opposite sides of the hyperplane $H$. We wish to apply \cref{lem-hyperplane-ray} with the sets $U=X\cap S\su S$ and $W=Y\cap S\su S$, so we need to check that these sets satisfy assumptions (a) and (b).

By \cref{claim-cone-geometry}, the point $q$ has spherical distance at most $2\eps$ from $a$. Consequently, $q$ has spherical distance at most $3\eps$ from every point in $X\cap S$. Furthermore, because all points in $Y\cap S$ have spherical distance at least $5\eps$ from $a$, we can conclude that $q$ has spherical distance at least $3\eps$ from every point in $Y\cap S$. This verifies assumption (a).

For assumption (b), recall that $q$ is the centre of a solid cap bounded by $H$. We claim that the common spherical distance between $q$ and all the points in $H\cap S$ (i.e.\ the radius of this cap) is at most $2\eps$. If $H=H_Z$, then $q=a$ and $H\cap S=Z$, so the spherical distance is exactly $2\eps$ by construction. Recall that each of the hyperplanes in $\mathcal H\setminus \lbrace H_Z\rbrace$ passes through $a$ and some facet $f$ of $P_Z$. So, if $H\ne H_Z$, then $H\cap S$ contains the point $a$, and \cref{claim-cone-geometry} implies that the spherical distance between $q$ and $H\cap S$ is at most $2\eps$, as claimed.

Together with our observation above that $q$ has spherical distance at least $3\eps$ from all points in $W=Y\cap S$, this implies that $W=Y\cap S$ and $q$ lie on opposite sides of the hyperplane $H$. This verifies assumption (b). Thus, by \cref{lem-hyperplane-ray}, for all points $x\in X=\conv(X\cap S)$ and $y\in Y=\conv(Y\cap S)$ the ray $\left\lbrace y+t(y-x)\mid t\in \RR_{\geq 0}\right\rbrace$ lies on the opposite side of $H$ from $q$ (i.e.\ on the same side of $H$ as $b$). We have proved that for all $x\in X$ and $y\in Y$, the ray $\left\lbrace y+t(y-x)\mid t\in \RR_{\geq 0}\right\rbrace$ lies inside each of the half-spaces defining $Q$ (bounded by the hyperplanes $H\in \mathcal{H}$), so the ray lies inside $Q$ itself. This shows that $Q$ satisfies condition (ii) in \cref{lem:cap-polyhedron}.

It remains to prove \cref{claim-cone-geometry}.

\begin{proof}[Proof of \cref{claim-cone-geometry}]
First note that the case $H=H_Z$ is immediate, because in this case $q=a$ ($H_Z$ is the hyperplane passing through the set of all points at spherical distance $2\eps$ from $a$, and $2\eps<\pi/2$, so $a$ is the centre of the smaller of the two caps that $H_Z$ cuts $B$ into). So let us from now on assume that $H\neq H_Z$, meaning that $H$ passes through $a$ and one of the facets of the $(d-1)$-dimensional polytope $P_Z$.

Let $A$ be the two-dimensional plane passing through $b$, $a$ and $q$. Recall that $b_Z$ and $r_Z$ are the centre and the radius of the $(d-1)$-dimensional ball $\conv(Z)$. Also recall that $b_Z$ lies on the line segment between $a$ and $b$, so $b_Z$ lies in $A$ as well. We will intersect all relevant objects with $A$, which will make everything much easier to visualise. Since the line $bq$ is orthogonal to $H$, and the line $ab$ is orthogonal to $H_Z$, the intersections $H\cap A$ and $H_Z\cap A$ are both lines. Each of these lines intersects the circle $S\cap A$ in two points (and in the former case, one of these points is $a$). Let $z$ and $z'$ be the intersections of $H_Z\cap A$ with $S\cap A$ (then $z$ and $z'$ are the endpoints of the line segment $A\cap \conv(Z)$, and $b_Z$ is the midpoint of this segment), and note that $z,z'\in Z$. Furthermore, let $s$ be the intersection of $H\cap A$ with $S\cap A$ other than $a$. Recall that $H$ passes through a facet of $P_Z$, and that $P_Z$ was chosen such that its interior contains the $(d-1)$-dimensional ball in $H_Z$ centred at $b_Z$ with radius $r_Z/2$. Hence, this $(d-1)$-dimensional ball is disjoint from $H\cap H_Z$. Therefore the intersection point $p$ between the lines $H\cap A$ and $H\cap H_Z$ lies outside the line segment represented in bold in \cref{fig-plane-Q}, between the midpoint of $b_Z$ and $z$ and the midpoint of $b_Z$ and $z'$.
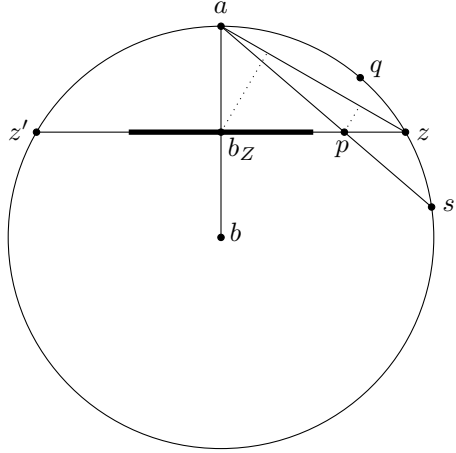
\begin{figure}
    \centering
\begin{tikzpicture}[x=3.5cm,y=3.5cm,scale=0.8]
\clip(-1.1,-1.05) rectangle (1.1,1.15);
\draw  (0,0) circle (1);
\draw (0,1)-- (0,0);
\draw (-0.866926409059089,0.4984361556668147)-- (0.8669264090590889,0.4984361556668148);
\draw (0,1)-- (0.9895925153740783,0.1438980664067608);
\draw (0,1)-- (0.8669264090590889,0.4984361556668148);
\draw [line width=2pt] (-0.4334632045295445,0.49843615566681476)-- (0.43346320452954445,0.4984361556668148);
\draw [dotted] (0,0.4984361556668148)-- (0.21740947124082008,0.8742168550288583);
\draw [dotted] (0.57977187862561,0.4984361556668148)-- (0.651785043726563,0.6229070786236873);
\draw [fill] (0,0) circle (1.5pt);
\draw (0.06888320624343089,0.026397169055649438) node {$b$};
\draw [fill] (0,1) circle (1.5pt);
\draw (0,1.0884273517115737) node {$a$};
\draw [fill] (0.8669264090590889,0.4984361556668148) circle (1.5pt);
\draw (0.95,0.49) node {$z$};
\draw [fill] (-0.866926409059089,0.4984361556668147) circle (1.5pt);
\draw (-0.95,0.51) node {$z'$};
\draw [fill] (0.9895925153740783,0.1438980664067608) circle (1.5pt);
\draw (1.07,0.15) node {$s$};
\draw [fill] (0,0.4984361556668148) circle (1.5pt);
\draw (0.1,0.42) node {$b_Z$};
\draw [fill] (0.57977187862561,0.4984361556668148) circle (1.5pt);
\draw (0.5694302185607726,0.42) node {$p$};
\draw [fill] (0.6542560407031941,0.7562731207727671) circle (1.5pt);
\draw (0.73,0.8) node {$q$};
\end{tikzpicture}
\caption{A view of the two-dimensional plane $A$ through $b$, $a$ and $q$. }
\label{fig-plane-Q}
\end{figure}

For two points on the circle $S\cap A$, the \emph{arc-distance} between them is the length of the shorter of the two circular arcs between the two points. Our goal is to show that the arc-distance between $a$ and $q$ is at most $2\varepsilon$. Note that $q$ is the midpoint of the (shorter) arc between $a$ and $s$ (since on the sphere $S$, the point $q$ is the centre of the smaller solid cap bounded by $H$). It therefore suffices to show that the arc-distance between $a$ and $s$ is at most $4\eps$.

To show this, first note that the arc-distances from $a$ to $z$ and to $z'$ are both $2\eps$ (since $z,z'\in Z$). Let us now briefly consider the possibility\footnote{It turns out that this is actually impossible, but treating this case is a bit simpler than showing that it is impossible.} that $a$ and $s$ lie on the same side of the chord $zz'$ of the circle $S\cap A$ (contrary to the illustration in \cref{fig-plane-Q}). In this case, the arc-distance between $a$ and $s$ is at most the arc-distance from $a$ to $z$ and to $z'$, and therefore at most $2\eps\le 4\eps$, as desired.

So, we may assume that $a$ and $s$ lie on different sides of the chord $zz'$, meaning that $p$ lies on the chord $zz'$. To disambiguate between $z$ and $z'$, let us assume that $p$ is closer to $z$ than to $z'$. Then $p$ lies on the line segment between $b_Z$ and $z$, and is closer to $z$ than to $b_Z$ (since $p$ lies outside the line segment between the midpoint of $b_Z$ and $z$ and the midpoint of $b_Z$ and $z'$).

Now, recall that $z$ and $z'$ both have (the same) arc-distance $2\eps$ from $a$, and $b_Z$ is the midpoint of the chord $zz'$, so the triangle $ab_Zz$ has a right angle at $b_Z$. Hence the orthogonal projection of $b_Z$ onto the line $az$ lies in the interior of the segment $az$. The orthogonal projection of $p$ onto the line $az$ lies between the projection of $b_Z$ and the point $z$, and it is closer to the point $z$ than to the projection of $b_Z$. Consequently, the projection of $p$ is closer to $z$ than to $a$. This means that $p$ lies on the same side of the perpendicular bisector of $az$ as $z$. Hence the distance of $p$ to $z$ is smaller than the distance of $p$ to $a$. Consequently, in the triangle $pza$ the angle $\angle paz$ is smaller than the angle $\angle azp$. In other words, the angle $\angle saz$ is smaller than the angle $\angle azz'$. Hence the arc between $s$ and $z$ is shorter than the arc between $a$ and $z'$. However, the latter arc has length exactly $2\eps$. Thus, the arc-distance between $z$ and $s$ is at most $2\eps$. Since the arc-distance between $a$ and $z$ equals $2\eps$, this implies that the arc-distance between $a$ and $s$ is at most $4\eps$, as desired.
\end{proof}

\section{Preparations for the proof for cyclic polygons}
\label{sec:preparations-cyclic}

In the proof of \cref{thm:cyclic-polygon}, we will use the following lemma due to Shitov~\cite[Lemma 3.1]{Sh14v1}.

\begin{lem}
\label{lem:shitov-original}
Let $P$ be a polygon, and let $V$ and $F$ be the sets of vertices and facets (edges) of $P$, respectively. Let $X\su V$ be a set of consecutive vertices of $P$, and let $F'\su F$ be the set of facets of $P$ with both endpoints in $X$.

Let $M$ be a slack matrix of the polytope $P$, with rows indexed by $V$ and columns indexed by $F$. Let $M'$ be a matrix with rows indexed by $V\setminus X$ and columns indexed by $F'$, such that the following condition holds for each vertex $v\in V\setminus X$:
\begin{itemize}
    \item[($\star$)] there are a vertex $x_v\in X$ and real numbers $\alpha_v>0$ and $\beta_v\geq 0$ such that $M_{v,f}'=\alpha_v M_{v,f}-\beta_v M_{x_v,f}$ for all $f\in F'$.
\end{itemize}
Then, if all the entries of $M'$ are nonnegative, we have $\rank_+ M'\le 8$.
\end{lem}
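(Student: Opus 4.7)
The plan is to mirror the ``lampshade'' strategy used in \cref{lem:shitov}, specialised to dimension two and adapted to accommodate the more flexible condition $(\star)$.

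First, by \cref{fact:closed}, it suffices to prove the bound $\rank_+ M' \le 8$ under the additional assumption that $\alpha_v \ne \beta_v$ for every $v \in V\setminus X$. Indeed, if $\alpha_v = \beta_v$ for some $v$, one may replace $\alpha_v$ by $\alpha_v + \varepsilon$; because $M_{v,f} \ge 0$, the perturbed matrix still has all nonnegative entries and it converges to $M'$ as $\varepsilon \to 0$.

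For each $v \in V\setminus X$, define the \emph{extrapolated point}
\[w_v = \frac{\alpha_v v - \beta_v x_v}{\alpha_v - \beta_v} \in \RR^{2},\]
so that, writing $\psi_f$ for the affine slack function of the constraint associated with $f$, we have $M'_{v,f} = (\alpha_v - \beta_v)\,\psi_f(w_v)$ for every $f \in F'$. Partition $V\setminus X = V_+ \sqcup V_-$ by the sign of $\alpha_v - \beta_v$. Since rescaling a row by a positive constant does not change the nonnegative rank, it suffices to exhibit at most $4$ nonnegative generating vectors in $\RR^{F'}_{\ge 0}$ for the rescaled rows of $M'$ arising from $V_+$ (which are exactly the vectors $(\psi_f(w_v))_{f\in F'}$), and similarly at most $4$ generators for the rescaled rows from $V_-$ (namely the vectors $(-\psi_f(w_v))_{f\in F'}$). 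Summing these two contributions gives $\rank_+ M' \le 8$.

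The heart of the argument is a two-dimensional lampshade construction. For the $V_+$ side, nonnegativity of $M'$ forces $\psi_f(w_v) \ge 0$ for every $f \in F'$; in other words, each $w_v$ lies in $R_+ := \{w\in \RR^2 : \psi_f(w) \ge 0 \text{ for all } f \in F'\}$. I would exhibit a convex region $Q_+ \su R_+$ containing all these points, expressible as the Minkowski sum of a bounded polygon and a recession cone, with total number of vertices plus extreme recession rays at most $4$. Each vertex $q$ of $Q_+$ contributes the nonnegative vector $(\psi_f(q))_{f\in F'}$, and each extreme recession ray $r$ contributes the vector $(-a_f\cdot r)_{f\in F'}$, which is nonnegative precisely by the recession condition for $R_+$. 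For any $w \in Q_+$ written as $w = \sum_i \lambda_i q_i + \sum_j \mu_j r_j$ (with $\lambda_i \ge 0$, $\sum_i \lambda_i = 1$, $\mu_j \ge 0$), affine linearity gives $(\psi_f(w))_f = \sum_i \lambda_i (\psi_f(q_i))_f + \sum_j \mu_j (-a_f\cdot r_j)_f$, a nonnegative combination of at most $4$ generators. An entirely analogous construction of $Q_- \su R_-$ handles the $V_-$ side, giving $4$ more generators.

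The main obstacle is the explicit construction of $Q_\pm$: because $w_v$ can escape to infinity along the ray $\{v + t(v - x_v) : t\ge 0\}$ as $\alpha_v$ approaches $\beta_v$, the regions $Q_\pm$ must in general be unbounded, and only two recession directions may appear if we are to stay within the budget. Here one should exploit the fact that both $X$ and its complement $V\setminus X$ are contiguous arcs of the boundary of $P$, so that the rays $\{v + t(v - x_v)\}$ sweep only an angularly restricted sector of the plane. A natural attempt is to cut out $Q_+$ by the two lines through the ``boundary'' edges of $P$ incident to the endpoints of $X$ (the edges with exactly one endpoint in $X$), possibly truncated by two additional auxiliary lines chosen so that $Q_+ \su R_+$ and every $w_v$ for $v \in V_+$ lies inside; verifying these containments and counting the extreme elements is the crux of the argument, and is presumably where Shitov's original proof does its real work.
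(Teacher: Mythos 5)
The paper does not give its own proof of this lemma---it is stated as Shitov's Lemma~3.1 from \cite{Sh14v1}, with only a footnote explaining that the positive rescalings $\alpha_v$ appearing in condition $(\star)$ can be absorbed into the rows without affecting the nonnegative rank. Your proposal instead attempts a from-scratch proof via the lampshade method, which is indeed the philosophy behind Shitov's argument and behind the paper's higher-dimensional \cref{lem:shitov}. The algebraic reduction you set up is sound: the perturbation to $\alpha_v\neq\beta_v$ justified by \cref{fact:closed}, the extrapolated points $w_v$ with $M'_{v,f}=(\alpha_v-\beta_v)\psi_f(w_v)$, the split by the sign of $\alpha_v-\beta_v$, and the $4+4$ accounting via vertices and extreme rays of pointed polyhedra $Q_\pm$ (with the correct handling of the affine constant using the convexity coefficients summing to~$1$).

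However, the geometric construction of $Q_+\subseteq R_+$ and $Q_-\subseteq R_-$ with at most four extreme elements each, containing all the relevant $w_v$, is not carried out, and you explicitly flag that this is ``presumably where Shitov's original proof does its real work.'' That is correct, and it is a genuine gap rather than a routine verification: $R_\pm$ can have as many as $|F'|$ facets, some of the $w_v$ can sit \emph{on} the boundary of $R_\pm$ (precisely when $M'_{v,f}=0$ for some $f\in F'$), and it is far from clear that a four-element $Q_\pm$ simultaneously containing all the $w_v$ and staying inside $R_\pm$ always exists. The candidate you sketch (the lines through the two edges of $P$ incident to the endpoints of $X$, truncated by auxiliary lines) is unverified on exactly the points that matter---containment in $R_\pm$, containment of the $w_v$, and the count of extreme elements. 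Without that construction the proposal is a plausible skeleton but not a proof of the lemma.
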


Shitov's original lemma~\cite[Lemma 3.1]{Sh14v1} is actually more general: in condition ($\star$) it allows $M'_{v,f}$ to be a linear combination of $M_{v,f}$ and all of the entries $M_{x,f}$ with $x\in X$ (not just one particular entry $M_{x_v,f}$), where again the coefficients only depend on $v$. However, since this more general form of the lemma requires more complicated notation, we only stated the special case\footnote{Actually, strictly speaking \cref{lem:shitov-original} is not quite a special case of \cite[Lemma 3.1]{Sh14v1}, and a tiny bit of deduction is required. The statement of Shitov's original lemma~\cite[Lemma 3.1]{Sh14v1} is not written to allow an arbitrary coefficient $\alpha_v>0$ in condition ($\star$), it only allows $\alpha_v=1$. However, we can simply rescale every row of $M'$ by the reciprocal of the corresponding coefficient $\alpha_v$, to put us in the setting of \cite[Lemma 3.1]{Sh14v1}. These rescalings do not affect the nonnegative rank of $M'$.} that we need for the proof of \cref{thm:cyclic-polygon}.

The reader may want to compare \cref{lem:shitov-original} with \cref{lem:shitov}. In \cref{lem:shitov}, we impose some conditions, and deduce that a certain matrix $M'$ has nonnegative entries and bounded nonnegative rank. Here, in \cref{lem:shitov-original}, one of the conditions we impose is that $M'$ has nonnegative entries, and we deduce that $M'$ has bounded nonnegative rank.

Note that for the proof of \cref{thm:cyclic-polygon} we may assume without loss of generality that all vertices of the polygon $P$ lie on the unit circle $\Gamma$ around the origin. For two points $x,y\in \Gamma$, let us define the \emph{arc-distance} between $x$ and $y$ to be the length of the shorter arc between $x$ and $y$ along the circle $\Gamma$. In the proof of \cref{thm:cyclic-polygon}, we will use the assumption that $P$ is a cyclic polygon (with vertices on $\Gamma$) by applying the following lemma. This will be the only place where we use the assumption that $P$ is cyclic.

\begin{lem}\label{lem:circle-geometry}
Let $P$ be a polygon all of whose vertices lie on the unit circle $\Gamma$. Let $V$ and $F$ be the sets of vertices and facets (edges) of $P$, and let $M$ be a slack matrix of the polytope $P$, with rows indexed by $V$ and columns indexed by $F$.

Let $X\su \Gamma$ be an arc of $\Gamma$ of length $\eps>0$, and let $Y\su \Gamma$ be the set of all points on $\Gamma$ with arc-distance at least $5\eps$ from every point of the arc $X$.

Now suppose that $v\in V$ is a vertex on the arc $X$ and that $f\in F$ is a facet both of whose endpoints are on the arc $X$. Furthermore, suppose that $w\in V$ is a vertex with $w\in Y$ and that $g\in F$ is a facet both of whose endpoints are in the set $Y$. Then $M_{v,f}M_{w,g}\leq M_{v,g}M_{w,f}$.
\end{lem}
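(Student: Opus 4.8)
The plan is to strip the statement down to a metric inequality about points on the unit circle, reduce that to a two‑point inequality, and finally dispatch the two‑point inequality with Ptolemy's theorem plus a short trigonometric estimate — which is where the constant $5$ in the separation hypothesis is really used.

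First, the quantity $M_{v,f}M_{w,g}-M_{v,g}M_{w,f}$, and in particular its sign, is invariant under rescaling the columns of $M$ by positive constants; and for a slack matrix the $(u,h)$‑entry is a positive multiple, depending only on $h$, of the Euclidean distance $\operatorname{dist}(u,\ell_h)$ from the vertex $u$ to the line $\ell_h$ spanned by the edge $h$. So I may assume $M_{u,h}=\operatorname{dist}(u,\ell_h)$, and then I would use the elementary identity
\[\operatorname{dist}(u,\ell_h)=\tfrac12\,|u h_1|\,|u h_2|\]
valid for any point $u$ and any chord $h$ with endpoints $h_1,h_2$ on the unit circle (compare the two formulas $\tfrac12|h_1h_2|\cdot\operatorname{dist}(u,\ell_h)$ and $\tfrac14|uh_1||uh_2||h_1h_2|$ for the area of the triangle $uh_1h_2$, using that its circumradius is $1$). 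With $f_1,f_2$ the endpoints of $f$ and $g_1,g_2$ the endpoints of $g$, the claim becomes
\[|vf_1|\,|vf_2|\,|wg_1|\,|wg_2|\ \le\ |vg_1|\,|vg_2|\,|wf_1|\,|wf_2|.\]
I would obtain this by multiplying the $(p,q)=(f_1,g_1)$ and $(p,q)=(f_2,g_2)$ instances of the two‑point inequality
\[|vp|\cdot|wq|\ \le\ |vq|\cdot|wp|\qquad(p\in X,\ q\in Y).\]
(If $v\in\{f_1,f_2\}$ or $w\in\{g_1,g_2\}$ the original inequality holds trivially, so all four points may be assumed distinct.)

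To prove $|vp|\cdot|wq|\le|vq|\cdot|wp|$: the four points $v,p\in X$ and $w,q\in Y$ are concyclic, and since $X$ and $Y$ are disjoint arcs separated by gaps of length at least $5\eps>0$, in the cyclic order of the four points $v$ and $p$ are consecutive and $w$ and $q$ are consecutive. Apply Ptolemy's theorem to the resulting cyclic quadrilateral. In two of the four cyclic orders the product $|vq|\cdot|wp|$ is a diagonal product and Ptolemy's identity immediately gives $|vp|\cdot|wq|\le|vq|\cdot|wp|$ (the right side equals the left side plus a nonnegative term). In the remaining two orders $|vw|\cdot|pq|$ is the diagonal product, and Ptolemy reduces the claim to a trigonometric inequality of the shape
\[\sin\tfrac{\alpha+\beta}{2}\cdot\sin\tfrac{\beta+\gamma}{2}\ \le\ 2\,\sin\tfrac{\beta}{2}\cdot\sin\tfrac{\delta}{2}\]
(or its companion $2\sin\tfrac{\alpha}{2}\sin\tfrac{\gamma}{2}\le\sin\tfrac{\alpha+\beta}{2}\sin\tfrac{\beta+\gamma}{2}$), where $\alpha,\beta,\gamma,\delta\ge 0$ are the four arc lengths, so $\alpha+\beta+\gamma+\delta=2\pi$, and where the separation forces $\alpha$ to be the short ``inside $X$'' arc with $\alpha\le\eps$ while $\beta$ and $\delta$ are the two ``crossing'' arcs with $\beta,\delta\ge5\eps$; in particular $\alpha\le\beta/5$ and $\alpha\le\delta/5$.

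Verifying these last inequalities is the main obstacle, and it genuinely needs the factor $5$ (a weaker separation makes them false). The idea is to write $\sin\tfrac{\alpha+\beta}{2}=\sin(\tfrac{\beta}{2}+\tfrac{\alpha}{2})$ and combine $|\sin(x+y)-\sin x|\le|y|$ with $\sin x\ge \tfrac{2x}{\pi}$ on $[0,\tfrac{\pi}{2}]$ to get $\sin\tfrac{\alpha+\beta}{2}\le(1+\tfrac{\pi}{10})\sin\tfrac{\beta}{2}$, and likewise — after rewriting $\sin\tfrac{\beta+\gamma}{2}=\sin\tfrac{\alpha+\delta}{2}$ using $\alpha+\beta+\gamma+\delta=2\pi$ — get $\sin\tfrac{\beta+\gamma}{2}\le(1+\tfrac{\pi}{10})\sin\tfrac{\delta}{2}$; since $(1+\tfrac{\pi}{10})^2<2$, this closes the gap, and the companion inequality is handled the same way. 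The only subtlety is that one of $\beta,\delta$ may exceed $\pi$ (not both, as $\beta+\delta\le2\pi$), in which case one first flips the offending half‑angle via $\sin\theta=\sin(\pi-\theta)$ and $\alpha+\beta+\gamma+\delta=2\pi$ before applying the same estimate. Combining everything yields $|vp|\cdot|wq|\le|vq|\cdot|wp|$ for all $p\in X$, $q\in Y$, and hence the lemma.
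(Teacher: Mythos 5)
Your proof is correct, and it takes a genuinely different route from the paper's. After the common first step of reducing slack entries to Euclidean distances $d(u,\ell_h)$ from vertices to edge-lines (the rescaling only depends on the column, so it cancels on the two sides), the paper and your argument diverge. The paper chooses a tangent point $z_f$ on the arc and shows $d(v,\ell_f)/d(w,\ell_f)=(d(v,z_f)/d(w,z_f))^2$ by similar triangles, reducing the claim to a two-point chord inequality $d(v,z_f)d(w,z_g)\le d(v,z_g)d(w,z_f)$, which is dispatched with the triangle inequality and the estimate that chords spanning the $5\eps$ gap have length at least $3\eps$. You instead invoke the circumradius area identity $d(u,\ell_h)=\tfrac12|uh_1|\,|uh_2|$, which turns the claim into a pure chord-length inequality that you factor as a product of two two-point inequalities $|vp|\,|wq|\le |vq|\,|wp|$ (for $p\in X,\ q\in Y$) and then prove via Ptolemy; in the ``hard'' cyclic orders you correctly shift the burden to $|vw|\,|pq|\le 2|vq|\,|wp|$ and close the gap with $\sin\tfrac{\alpha+\beta}{2}\le(1+\tfrac{\pi}{10})\sin\tfrac{\beta}{2}$ and its $\delta$-companion, using $(1+\tfrac{\pi}{10})^2<2$; the $\beta>\pi$ subcase is trivial since then $\sin$ is monotone on the relevant range. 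Each approach has something to recommend it: your chord factorisation is arguably more transparent and yields a cleaner four-point statement, while the paper's tangent-point trick avoids any trigonometry and any Ptolemy case analysis. One small inaccuracy is your parenthetical claim that the argument ``genuinely needs the factor $5$'': your own bound shows that $(1+\tfrac{\pi}{2k})^2\le 2$ already holds for $k=4$ (indeed for $k\ge \tfrac{\pi}{2}(\sqrt2+1)\approx 3.79$), so the factor $5$ in the hypothesis is not tight for your method; this is immaterial to correctness since $5$ is what is assumed.
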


We postpone the proof of \cref{lem:circle-geometry} to \cref{section-geometry-circle}. Roughly speaking, the idea is as follows. Since $v$ and the endpoints of $f$ are on the arc $X$, but $w$ and the endpoints of $g$ are relatively far away from $X$, the slack $M_{v,f}$ is significantly smaller than the slacks $M_{w,f}$ and $M_{v,g}$. The slack $M_{w,g}$ may be large, but we will argue using the triangle inequality that then also one of the slacks $M_{v,g}$ and $M_{w,f}$ needs to be large.

Finally, we will need the following lemma about matrices. Very roughly speaking, this lemma states that if a matrix satisfies certain inequalities between products of its entries, then one can rescale the rows of the matrix in such a way that certain entries are larger than certain other entries. In the proof of \cref{thm:cyclic-polygon}, we will apply this lemma to certain submatrices of the slack matrix. When doing so, we will use \cref{lem:circle-geometry} to show that these submatrices of the slack matrix satisfy the assumptions of \cref{lem:matrix-rescaling}.

\begin{lem}\label{lem:matrix-rescaling}
Let $M$ be a nonnegative matrix with rows indexed by $\lbrace 1,\dots,m\rbrace$ and columns indexed by some set $S$. Let $S=S_1\cup\dots\cup S_m$ be a partition of $S$ into non-empty subsets. Suppose that for each $j\in \lbrace 1,\dots,m\rbrace$ and each $s\in S\setminus S_j$, we have $M_{j,s}>0$. Furthermore suppose that for each $j\in \lbrace 1,\dots,m\rbrace$, each $k\in \lbrace 1,\dots,j-1\rbrace$, each $s\in S_j$ and each $t\in S_1\cup \dots\cup S_{j-1}$, we have $M_{j,s}M_{k,t}\leq M_{j,t}M_{k,s}$. Then there exist positive real numbers $\alpha_1,\dots, \alpha_m$ such that we have $\alpha_j M_{j,s}\leq \alpha_k M_{k,s}$ whenever $j,k\in \lbrace 1,\dots,m\rbrace$ and $s\in S_j$.
\end{lem}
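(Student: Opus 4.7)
My plan is to build the $\alpha_j$ inductively in the order $j=1,2,\dots,m$, maintaining at each stage the invariant that the conclusion of the lemma already holds when both row indices are restricted to $\{1,\dots,j\}$: for every $i\le j$, every $s\in S_i$, and every $k\le j$, we have $\alpha_i M_{i,s}\le \alpha_k M_{k,s}$. Setting $\alpha_1=1$ handles the base case trivially. To extend the invariant from stage $j-1$ to stage $j$, I need to choose $\alpha_j$ so that (i)~$\alpha_j\le \alpha_k M_{k,s}/M_{j,s}$ for every $k<j$ and $s\in S_j$ with $M_{j,s}>0$ (the new ``column in $S_j$'' constraints) and (ii)~$\alpha_j\ge \alpha_i M_{i,s}/M_{j,s}$ for every $i<j$ and $s\in S_i$ (the ``old columns'' constraints, where the denominators $M_{j,s}$ are positive by the first hypothesis, since $s\notin S_j$).

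The crux will be checking that these upper and lower bounds are compatible: for all $i,k<j$, $s\in S_i$, and $t\in S_j$ with $M_{j,t}>0$,
\[
\alpha_i M_{i,s} M_{j,t} \le \alpha_k M_{k,t} M_{j,s}.
\]
I plan to establish this via a two-step chain. First, the inductive hypothesis, applied to column $s\in S_i$ with $i\le j-1$ and row $k\le j-1$, gives $\alpha_i M_{i,s}\le \alpha_k M_{k,s}$; multiplying by $M_{j,t}\ge 0$ yields $\alpha_i M_{i,s}M_{j,t}\le \alpha_k M_{k,s}M_{j,t}$. Second, the second hypothesis of the lemma, applied with row indices $j>k$, column $t\in S_j$, and $s\in S_1\cup\dots\cup S_{j-1}$, gives $M_{j,t}M_{k,s}\le M_{j,s}M_{k,t}$; multiplying by $\alpha_k\ge 0$ yields $\alpha_k M_{k,s}M_{j,t}\le \alpha_k M_{k,t}M_{j,s}$. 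Chaining these two inequalities delivers the claim.

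The upper bounds in~(i) are strictly positive, since each $\alpha_k>0$ and the relevant $M_{k,s}$ is positive (as $s\in S_j$ and $k<j$ force $s\notin S_k$, so the first hypothesis applies), while the lower bounds in~(ii) are nonnegative. Hence, once compatibility is verified, the interval of admissible $\alpha_j$ is nonempty and has strictly positive upper endpoint, so we can always choose $\alpha_j>0$. The main obstacle I anticipate is the compatibility check: the structural hypothesis alone only bridges the pair of rows $(k,j)$ for columns in $S_j$ versus $S_{<j}$, and the two-step chain through the inductive hypothesis is what allows us to bridge arbitrary pairs $(i,k)$ of row indices below $j$.
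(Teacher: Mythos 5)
Your proof is correct and takes essentially the same inductive route as the paper: the paper picks $\alpha_m$ explicitly as the minimum of the upper-bound quantities $\alpha_k M_{k,s}/M_{m,s}$ over $s\in S_m$ (with $M_{m,s}\neq 0$) and $k<m$, and then verifies the lower-bound constraints at that value, which amounts to exactly your compatibility check via the same two-step chain (inductive hypothesis, then the product hypothesis). Your reformulation as a nonempty interval of admissible $\alpha_j$ is a clean way to package the same argument, and it absorbs the paper's separate treatment of the case $M_{m,s}=0$ for all $s\in S_m$ as the degenerate case where there are no upper-bound constraints.
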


\begin{proof}
We prove the lemma by induction on $m$. The case $m=1$ is trivial (we can take any $\alpha_1>0$ and the inequality $\alpha_1 M_{1,s}\leq \alpha_1 M_{1,s}$ is trivially satisfied for each $s\in S_1$).

Let us now assume that $m\geq 2$ and that the lemma is already proved for $m-1$. Then (by ignoring the row with index $m$ and the columns with indices in $S_m$) we can find positive real numbers $\alpha_1,\dots, \alpha_{m-1}$ such that we have $\alpha_j M_{j,s}\leq \alpha_k M_{k,s}$ whenever $j,k\in \lbrace 1,\dots,m-1\rbrace$ and $s\in S_j$.

Now we need to find $\alpha_m>0$ such that the desired inequality $\alpha_j M_{j,s}\leq \alpha_k M_{k,s}$ for $j,k\in \lbrace 1,\dots,m\rbrace$ and $s\in S_j$ also holds if $j$ or $k$ are equal to $m$. Note that for $j=k=m$ the inequality $\alpha_m M_{m,s}\leq \alpha_m M_{m,s}$ is automatically satisfied for all $s\in S_m$.

Hence it suffices to find $\alpha_m>0$, such that both of the following conditions are satisfied:
\begin{equation}\label{eq:ineq-matrix-m-k}
\alpha_m M_{m,s}\leq \alpha_k M_{k,s}
\quad \text{ for all }k\in \lbrace
1,\dots,m-1\rbrace\text{ and }s\in S_m,
\end{equation}
\begin{equation}\label{eq:ineq-matrix-m-j}
\alpha_j M_{j,s}\leq \alpha_m M_{m,s}\quad \text{ for all }j\in \lbrace 1,\dots,m-1\rbrace\text{ and }s\in S_j.
\end{equation}

Let us first consider the case that $M_{m,s}=0$ for all $s\in S_m$. Then we can simply choose $\alpha_m>0$ large enough such that $\alpha_m M_{m,s}\geq \alpha_j M_{j,s}$ for all $j\in \lbrace 1,\dots,m-1\rbrace$ and all $s\in S_j$ (recall that $M_{m,s}>0$ for $s\in S\setminus S_m$). This satisfies \cref{eq:ineq-matrix-m-j}, and note that \cref{eq:ineq-matrix-m-k} is automatically satisfied if $M_{m,s}=0$ for all $s\in S_m$.

So we may from now on assume that $M_{m,s}>0$ for at least one choice of $s\in S_m$. Let us now define $\alpha_m$ by
\[\alpha_m=\min_{\substack{s\in S_m\\M_{m,s}\neq 0}}\ \min_{k\in \lbrace 1,\dots,m-1\rbrace}\frac{\alpha_k M_{k,s}}{M_{m,s}}.\]
Note that $\alpha_m$ is a well-defined real number, and we have $\alpha_m>0$ since $M_{k,s}>0$ for all $k\in \lbrace 1,\dots,m-1\rbrace$ and $s\in S_m$ (and $\alpha_1,\dots,\alpha_{m-1}>0$).

By the definition of $\alpha_m$, condition \cref{eq:ineq-matrix-m-k} is satisfied (note that it is automatically satisfied for those $s\in S_m$ with $M_{m,s}=0$). It remains to check \cref{eq:ineq-matrix-m-j}.
 
Let $j\in \lbrace 1,\dots,m-1\rbrace$ and $t\in S_j$. We need to show that $\alpha_j M_{j,t}\leq \alpha_m M_{m,t}$. By the definition of $\alpha_m$, we can find $s\in S_m$ and $k\in \lbrace 1,\dots,m-1\rbrace$ such that $\alpha_m M_{m,s}=\alpha_k M_{k,s}$. By applying the assumption of the lemma to $m$, $k$, $s$ and $t$, we obtain $M_{m,s}M_{k,t}\leq M_{m,t}M_{k,s}$, and therefore
\[\alpha_m M_{m,s}\cdot \alpha_k M_{k,t}\leq \alpha_m M_{m,t}\cdot \alpha_k M_{k,s}.\]
Using $\alpha_m M_{m,s}=\alpha_k M_{k,s}$, this implies $\alpha_k M_{k,t}\leq \alpha_m M_{m,t}$. But by the choice of $\alpha_1,\dots,\alpha_{m-1}$ in the induction hypothesis we also have $\alpha_j M_{j,t}\leq \alpha_k M_{k,t}$ (recall that $t\in S_j$). We conclude that $\alpha_j M_{j,t}\leq \alpha_k M_{k,t}\leq \alpha_m M_{m,t}$, as desired.
\end{proof}

\section{Upper-bounding the extension complexity of cyclic polygons}
\label{sec:cyclic}

In this section, we prove \cref{thm:cyclic-polygon}. Let $P$ be a cyclic polygon with $n$ vertices, and let $V$ and $F$ be its sets of vertices and facets (edges). Then $|V|=|F|=n$. By rescaling and translating $P$, we may assume without loss of generality that all vertices of $P$ lie on the unit circle $\Gamma$ around the origin. We may also assume that $n\geq 24^2=576$, since for $n<24^2$ we trivially have $\xc(P)\leq n<24\sqrt{n}$.

We can divide the $n$ facets of $P$ into $\lceil \sqrt{n}\,\rceil$ blocks of consecutive facets, such that each of these blocks consists of at most $\lceil \sqrt{n}\,\rceil$ facets. For each of these blocks, we obtain a (closed) arc $X$ of $\Gamma$ from the first vertex of the first facet in the block to the last vertex of the last facet in the block. Note that then each facet of the block has both endpoints in $X$, and in total the arc $X$ contains at most $\lceil \sqrt{n}\,\rceil +1$ vertices of $P$.

In this way, we obtain a collection $\mathcal{X}$ of arcs $X\su \Gamma$, such that $\vert \mathcal{X}\vert\leq \lceil \sqrt{n}\,\rceil$, the arcs $X\in\mathcal{X}$ are disjoint apart from their endpoints, each arc $X\in \mathcal{X}$ contains at most $\lceil \sqrt{n}\,\rceil +1$ vertices of $P$, and for each facet $f\in F$ there is exactly one arc $X\in \mathcal{X}$ such that both endpoints of $f$ are in $X$. Let us say that the facet $f$ \emph{belongs to} this arc $X$.

Note that the arcs in $\mathcal{X}$ may have different lengths. Recall that for any two points $x,y\in \Gamma$, we defined the arc-distance between $x$ and $y$ to be the length of the shorter arc between $x$ and $y$ along the circle $\Gamma$.

\begin{defn}
We say that two arcs $X, X'\in \mathcal{X}$ of lengths $\eps$ and $\eps'$ are \emph{well-separated} if the arc-distance between any two points $x\in X$ and $x'\in X'$ is at least $5\min\{\eps,\eps'\}$.
\end{defn}

\begin{claim}\label{claim:arc-colouring-counting}
For every arc $X\in \mathcal{X}$, there exist at most $13$ arcs $X'\in \mathcal{X}$ which are not well-separated from $X$ and are at least as long as $X$.
\end{claim}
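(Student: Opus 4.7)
The plan is to reduce the claim to a one-dimensional length-packing argument on the circle $\Gamma$. Let $X\in\mathcal{X}$ have arc-length $\eps$, and let $\tilde X\su\Gamma$ denote the set of points within arc-distance $5\eps$ of some point of $X$. Then $\tilde X$ is either the entire circle $\Gamma$ (when $11\eps\ge 2\pi$) or an arc of length exactly $\eps+2\cdot 5\eps=11\eps$. If $X'\in\mathcal{X}$ has length $\eps'\ge\eps$ and is not well-separated from $X$, then $5\min(\eps,\eps')=5\eps$, so by definition there exist $x\in X$ and $x'\in X'$ with arc-distance less than $5\eps$; in particular $X'\cap\tilde X\neq\emptyset$. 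So the task reduces to bounding the number of arcs $X'\in\mathcal{X}$ of length at least $\eps$ that meet $\tilde X$.

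First I would dispose of the wrap-around case $\tilde X=\Gamma$, where $\eps\ge 2\pi/11$. The arcs in $\mathcal{X}$ partition $\Gamma$ up to shared endpoints (since by construction the union of all blocks of consecutive facets is the whole cyclic polygon), so their total length equals $2\pi$. Every arc of length at least $\eps\ge 2\pi/11$ contributes at least $2\pi/11$ to this total, giving at most $11\le 13$ such arcs.

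In the remaining case, $\tilde X$ is a proper arc of length $11\eps$ with two endpoints. I split the arcs $X'$ of interest into those contained in $\tilde X$ and those containing an endpoint of $\tilde X$ in their interior. Since the arcs in $\mathcal{X}$ have pairwise disjoint interiors, at most one arc of $\mathcal{X}$ contains each endpoint of $\tilde X$ in its interior, contributing at most $2$. The arcs in the first category have pairwise disjoint interiors and all lie inside $\tilde X$, so the sum of their lengths is at most $11\eps$; since each has length at least $\eps$, there are at most $11$ of them. Summing yields $11+2=13$.

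The only mildly subtle point is the wrap-around case, which is why I separate it out; beyond that, the argument is a direct length-packing estimate that relies only on the two structural features of $\mathcal{X}$ already established, namely that the arcs tile $\Gamma$ and meet only at endpoints.
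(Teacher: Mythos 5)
Your argument is correct and is in essence the same length-packing argument as the paper's. The paper works with the $6\eps$-neighbourhood $Y$ of $X$ (of length at most $13\eps$) and observes that each non-well-separated arc $X'$ of length at least $\eps$ contributes a sub-arc of length at least $\eps$ to $Y$; since the arcs in $\mathcal{X}$ have pairwise disjoint interiors, there can be at most $13$ of them. That formulation absorbs both the wrap-around case and the arcs straddling the boundary of the neighbourhood automatically, which is what costs you the extra case analysis: you work with the smaller $5\eps$-neighbourhood $\tilde X$ (length $11\eps$), count the arcs fully contained in $\tilde X$ (at most $11$), separately bound the arcs straddling an endpoint (at most $2$), and dispose of the wrap-around case by a global packing bound. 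Both routes are valid; the paper's choice of radius $6\eps$ is simply chosen so that no case split is needed.

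One small imprecision in your write-up: when you split the arcs meeting $\tilde X$ into those contained in $\tilde X$ and those containing an endpoint of $\tilde X$ in their interior, you need the slightly stronger fact that $X'$ meets the \emph{interior} of $\tilde X$, not merely $\tilde X$, since an arc touching $\tilde X$ only at a single boundary point would fall into neither category. You do in fact have this: the point $x'\in X'$ witnessing non-well-separation has arc-distance \emph{strictly} less than $5\eps$ from $X$, so $x'$ lies in the interior of $\tilde X$, and then any arc of $\mathcal{X}$ containing $x'$ either sits inside $\tilde X$ or must contain an endpoint of $\tilde X$ strictly between $x'$ and a point of $X'$ outside $\tilde X$. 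Making that explicit closes the only soft spot in the argument.
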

\begin{proof}
Denote the length of the arc $X$ by $\eps$. Let $Y\su \Gamma$ be the set of points on the circle $\Gamma$ which have arc-distance at most $6\eps$ from some point of $X$. Note that then the set $Y$ is either an arc of $\Gamma$ of length $13\eps$, or all of $\Gamma$ (the second case occurs if $13\eps\geq 2\pi$). In either case, the length of $Y$ is at most $13\eps$.

If $X'\in \mathcal{X}$ is not well-separated from $X$ and has length at least $\eps$, then there must be points $x\in X$ and $x'\in X'$ of arc-distance less than $5\eps$. But then the intersection $X'\cap Y$ contains an entire arc of length $\eps$. Since the different arcs $X'\in \mathcal{X}$ are disjoint apart from their endpoints, there can be at most $13$ such arcs $X'\in \mathcal{X}$.
\end{proof}

\begin{claim}\label{claim:arc-colouring}
We can colour the elements of $\mathcal{X}$ with $14$ colours in such a way that any two arcs $X, X'\in \mathcal{X}$ of the same colour are well-separated.
\end{claim}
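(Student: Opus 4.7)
The plan is to prove the claim by a greedy colouring argument, ordering the arcs from longest to shortest and applying \cref{claim:arc-colouring-counting} at each step.

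First, I would order the arcs of $\mathcal{X}$ as $X_1, X_2, \dots, X_{\vert \mathcal{X}\vert}$ in non-increasing order of length, breaking ties arbitrarily. Then I would colour them one at a time in this order, using a palette of $14$ colours. When it is time to colour $X_i$, all previously coloured arcs $X_1, \dots, X_{i-1}$ have length at least the length of $X_i$. By \cref{claim:arc-colouring-counting} applied to $X_i$, among these previously coloured arcs at most $13$ fail to be well-separated from $X_i$. Since we have $14$ colours available, there is a colour not used by any of these at most $13$ conflicting arcs, and we assign this colour to $X_i$.

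It remains to verify that the resulting colouring has the required property. Suppose $X, X' \in \mathcal{X}$ receive the same colour, and without loss of generality assume $X$ was coloured before $X'$ in the greedy order, so that $X$ is at least as long as $X'$. If $X$ and $X'$ were not well-separated, then $X$ would be one of the at most $13$ arcs (from \cref{claim:arc-colouring-counting} applied to $X'$) with length at least that of $X'$ that are not well-separated from $X'$. But then the greedy step for $X'$ would have avoided the colour of $X$, a contradiction. Hence $X$ and $X'$ are well-separated, which completes the proof.

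I do not anticipate any real obstacle here: the heart of the argument is already contained in \cref{claim:arc-colouring-counting}, and the only task is to convert the bound of $13$ into a colouring via the standard greedy procedure on an ordering that guarantees the bound applies at every step. The use of an ordering from longest to shortest is essential, since \cref{claim:arc-colouring-counting} only controls conflicts with arcs that are at least as long as the arc under consideration.
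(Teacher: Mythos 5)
Your proof is correct and follows essentially the same argument as the paper: order the arcs by non-increasing length, colour greedily, and use \cref{claim:arc-colouring-counting} to bound the number of conflicting previously-coloured arcs by $13$, so $14$ colours suffice.
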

\begin{proof}
Let us order the arcs $X\in \mathcal{X}$ by decreasing length. Going through the arcs $X\in \mathcal{X}$ one by one in this order, we can now find the desired colouring greedily by assigning each arc $X\in \mathcal{X}$ a colour which is different from the colours of the previously coloured arcs $X'\in \mathcal{X}$ from which $X$ is not well-separated. Indeed, by \cref{claim:arc-colouring-counting}, for every $X\in \mathcal{X}$ there are at most $13$ such arcs $X'\in \mathcal{X}$ (note that all of the previously coloured arcs are at least as long as $X$).
\end{proof}

Let us colour the arcs of $\mathcal{X}$ with $14$ colours as in \cref{claim:arc-colouring}. For $c=1,\dots,14$, let $\mathcal{X}_c\su \mathcal{X}$ be the collection of arcs $X\in \mathcal{X}$ of colour $c$ (so $\mathcal{X}=\mathcal{X}_1\cup\dots\cup \mathcal{X}_{14}$ is a partition of $\mathcal{X}$). Then we obtain a partition $F=F_1\cup \dots\cup F_{14}$ of the facets of $P$, where for each $c=1,\dots,14$ we let $F_c$ be the set of facets belonging to some arc $X\in \mathcal{X}_c$ (i.e.\ to some arc of colour $c$).

Let $M$ be a slack matrix of the polygon $P$, with rows indexed by $V$ and columns indexed by $F$. We partition the matrix $M$ into $14$ submatrices $M[V,F_1],\dots,M[V,F_{14}]$, where for subsets $V'\su V$ and $F'\su F$, by $M[V',F']$ we denote the $V'\times F'$ submatrix of $M$ containing the slacks between vertices in $V'$ and facets in $F'$. It suffices to show that for each $c=1,\dots,14$ we have
\begin{equation}
\rank_+ M[V,F_c]\leq 8\vert \mathcal{X}_c\vert+\lceil\sqrt{n}\,\rceil +1.
\label{eq:bound-M-c-cyclic}
\end{equation}
Indeed, from \cref{eq:bound-M-c-cyclic} we obtain
\[\rank_+ M\leq \sum_{c=1}^{14} \rank_+ M[V,F_c]\leq \sum_{c=1}^{14}\left(8\vert \mathcal{X}_c\vert+\lceil\sqrt{n}\,\rceil +1\right)\leq 8\vert \mathcal{X}\vert+14\lceil\sqrt{n}\,\rceil +14
\leq 22\sqrt{n}+36,\]
which implies (by our assumption $n\geq 24^2$) that $\xc(P)=\rank_+ M\leq 24\sqrt{n}$, as desired.

So, let us from now on fix some $c\in \lbrace 1,\dots,14\rbrace$. For each $X\in \mathcal{X}_c$, let $V^X=V\cap X$ and let $F^X$ be the set of all facets of $P$ belonging to $X$. Let $V_c\su V$ be the set of all vertices in an arc $X\in\mathcal X_c$ (i.e.\ the union of all the sets $V^X$ for $X\in \mathcal{X}_c$). Since the arcs in $\mathcal X_c$ are well-separated and in particular disjoint, the sets $V^X$ for $X\in \mathcal{X}_c$ partition $V_c$ and the sets $F^X$ for $X\in \mathcal{X}_c$ partition $F_c$.

Let $N=\lceil\sqrt n\,\rceil+1$, and note that by the definition of the arcs in $\mathcal{X}$, we have $\vert V^X\vert\leq \lceil \sqrt{n}\,\rceil +1=N$ for each $X\in \mathcal{X}_c$. Let us fix a function $\phi:V_c\to \{1,\dots,N\}$ such that for each $X\in \mathcal{X}_c$ the restriction $\phi|_{V^X}$ of $\phi$ to $V^X$ is a bijection $\phi|_{V^X}:V^X\to \{1,\dots,|V^X|\}$ (we can choose such a function $\phi$ by choosing bijections $V^X\to \{1,\dots,|V^X|\}$ separately for each $X\in \mathcal{X}_c$). We can think of this function $\phi$ as a ``labelling'' that assigns each each vertex $v\in V^X$ a unique label in $\{1,\dots,|V^X|\}$.

\begin{claim}\label{claim:desired-scaling-matrix}
We can find positive real numbers $\alpha_v>0$ for all $v\in V_c$, such that the following holds. For any arc $X\in \mathcal{X}_c$, any facet $f\in F^X$, and any vertices $v\in V_c$ and $w\in V^X$ with $\phi(v)=\phi(w)$, we have $\alpha_v M_{v,f}\geq \alpha_w M_{w,f}$.
\end{claim}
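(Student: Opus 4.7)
The approach is to apply \cref{lem:matrix-rescaling} once for each label $i\in\{1,\dots,N\}$ (where $N=\max_{X\in\mathcal{X}_c}|V^X|$) to a suitable submatrix of the slack matrix, using \cref{lem:circle-geometry} to verify its hypothesis. First I would linearly order the arcs of $\mathcal{X}_c$ by non-increasing length, say $X_1,X_2,\dots,X_m$ with $\eps_{X_1}\geq\eps_{X_2}\geq\dots\geq\eps_{X_m}$, where $\eps_X$ denotes the length of arc $X$. For each label $i$, set $\mathcal{X}_c^{(i)}=\{X\in\mathcal{X}_c:|V^X|\geq i\}$ (viewed as an ordered subsequence of $X_1,\dots,X_m$), let $v_i^X$ denote the unique vertex of $V^X$ with $\phi(v_i^X)=i$, and let $M^{(i)}$ be the matrix with rows indexed by $\mathcal{X}_c^{(i)}$ and columns indexed by $\bigcup_{X\in\mathcal{X}_c^{(i)}}F^X$, with entries $M^{(i)}_{X,f}=M_{v_i^X,f}$ and column partition $S_X=F^X$.

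To apply \cref{lem:matrix-rescaling} to $M^{(i)}$, I would verify its two hypotheses. For the strict positivity hypothesis, if $f\in F^{X_\ell}$ for some $X_\ell\neq X$ in $\mathcal{X}_c^{(i)}$, then $f$ has both endpoints on $X_\ell$ and $X,X_\ell$ are well-separated (in particular disjoint), so $v_i^X$ is not an endpoint of $f$ and hence $M_{v_i^X,f}>0$. For the key inequality hypothesis, fix indices $k<j$ and $\ell<j$ (with $X_j,X_k,X_\ell\in \mathcal{X}_c^{(i)}$), $f\in F^{X_j}$ and $g\in F^{X_\ell}$. Since the arcs are ordered by non-increasing length, both $X_k$ and $X_\ell$ are at least as long as $X_j$, and well-separation gives arc-distance at least $5\min(\eps_{X_j},\eps_{X_k})=5\eps_{X_j}$ between $X_j$ and $X_k$, and similarly at least $5\eps_{X_j}$ between $X_j$ and $X_\ell$. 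Hence $v_i^{X_k}$ lies in the set $Y$ of points at arc-distance $\geq 5\eps_{X_j}$ from every point of $X_j$, and both endpoints of $g$ lie in $Y$. \cref{lem:circle-geometry}, applied with arc $X_j$, vertex $v=v_i^{X_j}$, facet $f$, vertex $w=v_i^{X_k}$ and facet $g$, then yields exactly $M_{v_i^{X_j},f}M_{v_i^{X_k},g}\leq M_{v_i^{X_j},g}M_{v_i^{X_k},f}$.

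\cref{lem:matrix-rescaling} then supplies positive reals $\alpha^{(i)}_X>0$ for $X\in\mathcal{X}_c^{(i)}$ satisfying $\alpha^{(i)}_X M_{v_i^X,f}\leq \alpha^{(i)}_{X'}M_{v_i^{X'},f}$ for all $X,X'\in \mathcal{X}_c^{(i)}$ and all $f\in F^X$. Define $\alpha_v:=\alpha^{(\phi(v))}_{X(v)}>0$ for each $v\in V_c$, where $X(v)\in\mathcal{X}_c$ is the arc containing $v$. To verify the claim, fix $X\in\mathcal{X}_c$, $f\in F^X$, $v\in V_c$ and $w\in V^X$ with $\phi(v)=\phi(w)=:i$. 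The case $v=w$ is immediate; otherwise $X(v),X\in\mathcal{X}_c^{(i)}$ and $w=v_i^X$, $v=v_i^{X(v)}$, so the output of \cref{lem:matrix-rescaling} (with its index $j$ corresponding to the arc $X$) gives $\alpha_w M_{w,f}\leq \alpha_v M_{v,f}$, as required.

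The main obstacle is matching the asymmetric well-separation bound (arc-distance $\geq 5\min(\eps_X,\eps_{X'})$) to the indexing convention of \cref{lem:matrix-rescaling}. Listing shorter arcs \emph{later} in the ordering is precisely what ensures that in every application of \cref{lem:circle-geometry}, the shortest arc involved plays the role of $X$, so that the bound $5\eps_X$ needed to place the other arcs inside $Y$ coincides with the $5\min$ guaranteed by well-separation; any other convention would leave the geometric hypothesis just out of reach.
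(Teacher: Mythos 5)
Your proof is correct and follows essentially the same route as the paper: for each label $i$ you apply \cref{lem:matrix-rescaling} to the matrix of slacks $M_{v_i^X,f}$ with rows ordered by non-increasing arc length, verify strict positivity off the diagonal blocks from disjointness of the arcs, and verify the product inequality hypothesis via \cref{lem:circle-geometry}, using that the earlier arcs are at least as long as $X_j$ so that well-separation gives arc-distance $\geq 5\eps_{X_j}$. This matches the paper's argument line for line, including the observation that ordering by decreasing length is what makes the $5\min(\cdot,\cdot)$ bound usable.
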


We defer the proof of \cref{claim:desired-scaling-matrix} until later in this section. For each $i=1,\dots,N$, we now define a nonnegative vector $t^{(i)}$ with entries indexed by $F_c=\bigcup_{X\in \mathcal{X}_c}F^X$. For every $X\in \mathcal{X}_c$, and every facet $f\in F^X$, we define the entry $t^{(i)}_f$ as follows. If $i>\vert V^X\vert$, define $t^{(i)}_f=0$. Otherwise, i.e.\ if $i\leq \vert V^X\vert$, let $v$ be the unique vertex in $V^X$ with $\phi(v)=i$ and define $t^{(i)}_f=\alpha_v M_{v,f}$.

Let $K$ be the $V\times F_c$ matrix defined by letting $K_{v,f}=\alpha_v M_{v,f}-t^{(\phi(v))}_f$ for all $v\in V_c$ and $f\in F_c$, and letting $K_{v,f}=M_{v,f}$ for all $v\in V\setminus V_c$ and $f\in F_c$. In other words, we obtain $K$ from $M[V,F_c]$ by first scaling the rows with indices in $V_c$ with the factors $\alpha_v$ as in \cref{claim:desired-scaling-matrix}, and then subtracting $t^{(\phi(v))}$ from each row corresponding to a vertex $v\in V_c$. The purpose of these subtractions is to ensure that for any $X\in \mathcal X_c$, any vertex $v\in V^X\su V_c$ and any facet $f\in F^X$, we have $K_{v,f}=\alpha_v M_{v,f}-t^{(\phi(v))}_f=\alpha_v M_{v,f}-\alpha_v M_{v,f}=0$. That is to say, for each $X\in \mathcal{X}_c$, all entries of the submatrix $K[V^X,F^X]$ are zero.

\begin{claim}\label{claim:matrix-K-cyclic-polygon}
The matrix $K$ has nonnegative entries and satisfies $\rank_+ K\leq 8\vert \mathcal{X}_c\vert$.
\end{claim}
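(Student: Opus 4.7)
The plan is to reduce the claim to a sum of instances of Shitov's Lemma (\cref{lem:shitov-original}), one per arc $X \in \mathcal{X}_c$. The construction of $K$ was designed precisely for this: for each $X \in \mathcal{X}_c$, the ``diagonal'' block $K[V^X, F^X]$ vanishes identically, while the remaining rows of $K[V, F^X]$ (those indexed by $V \setminus V^X$) fit the template $\alpha M_{v,f} - \beta M_{x_v,f}$ required by condition $(\star)$ of that lemma.

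I would first verify nonnegativity of $K$. Rows indexed by $v \in V \setminus V_c$ coincide with rows of $M$, hence are automatically nonnegative. For $v \in V_c$ and $f \in F^X$ with $X \in \mathcal{X}_c$, unfolding the definition of $t^{(\phi(v))}$ gives $K_{v,f} = \alpha_v M_{v,f}$ when $\phi(v) > |V^X|$, and $K_{v,f} = \alpha_v M_{v,f} - \alpha_w M_{w,f}$ when $\phi(v) \le |V^X|$, where $w$ is the unique vertex of $V^X$ with $\phi(w) = \phi(v)$. Nonnegativity in the second case is exactly the inequality supplied by \cref{claim:desired-scaling-matrix}.

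For the rank bound, using the column partition $F_c = \bigsqcup_{X \in \mathcal{X}_c} F^X$ gives $\rank_+ K \leq \sum_{X \in \mathcal{X}_c} \rank_+ K[V, F^X]$. Fixing an arbitrary $X \in \mathcal{X}_c$, the vanishing of $K[V^X, F^X]$ reduces the task to bounding $\rank_+ K[V \setminus V^X, F^X]$, to which I would apply \cref{lem:shitov-original} with consecutive-vertex set $V^X$ and facet set $F^X$. Condition $(\star)$ is verified by the same three-case split used for nonnegativity: if $v \in V \setminus V_c$ take $(\alpha,\beta) = (1,0)$ with $x_v$ arbitrary; if $v \in V_c \setminus V^X$ with $\phi(v) > |V^X|$ take $(\alpha,\beta) = (\alpha_v, 0)$; and otherwise take $x_v = w$, $\alpha = \alpha_v$, $\beta = \alpha_w$. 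Shitov's lemma then yields $\rank_+ K[V \setminus V^X, F^X] \leq 8$, and summing over $X \in \mathcal{X}_c$ gives the stated bound.

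The genuine difficulty in this argument has already been dispatched by the preceding setup, not by the claim itself: the factors $\alpha_v$ from \cref{claim:desired-scaling-matrix} had to be engineered to make $K$ nonnegative, and the vectors $t^{(i)}$ had to be arranged so that only $N = O(\sqrt{n})$ of them jointly zero out every diagonal block. Given both ingredients, the remaining step is essentially a bookkeeping exercise that forces each column-block of $K$ into the exact form handled by Shitov's lemma.
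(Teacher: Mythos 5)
Your proposal is correct and follows essentially the same route as the paper: split $K$ along the column partition $F_c = \bigsqcup_{X\in\mathcal{X}_c} F^X$, use the vanishing of the diagonal block $K[V^X,F^X]$ to reduce to $K[V\setminus V^X,F^X]$, verify condition $(\star)$ via the same three-case analysis of $K_{v,f}$, and invoke \cref{lem:shitov-original} once per arc. The nonnegativity check and the choice of $(\alpha_v,\beta_v,x_v)$ in each case match the paper's exactly.
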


We also defer the proof of \cref{claim:matrix-K-cyclic-polygon} until later in this section. It is now straightforward to deduce \cref{eq:bound-M-c-cyclic}. Indeed, by \cref{claim:matrix-K-cyclic-polygon} there is a collection of $8\vert \mathcal{X}_c\vert$ nonnegative vectors such that each row of the matrix $K$ can be written as a nonnegative linear combination of these vectors. Using these $8\vert \mathcal{X}_c\vert$ vectors together with the $N$ vectors $t^{(1)},\dots,t^{(N)}$, we can obtain any row of the matrix $M[V,F_c]$ as a nonnegative linear combination. To see this, recall that for every $v\in V\setminus V_c$ the row of $M[V,F_c]$ with index $v$ is identical to the row of $K$ with index $v$. Also, for every $v\in V_c$, the row of $M[V,F_c]$ with index $v$ can be obtained from the row of $K$ with index $v$ by adding the vector $t^{(\phi(v))}$ and afterwards scaling by $\alpha_v^{-1}$ (recall that $\alpha_v>0$). We conclude that $\rank_+ M[V,F_c]\leq 8\vert \mathcal{X}_c\vert+N= 8\vert \mathcal{X}_c\vert+\lceil\sqrt{n}\,\rceil+1$,
as desired. It remains to prove \cref{claim:desired-scaling-matrix,claim:matrix-K-cyclic-polygon}.

\begin{proof}[Proof of \cref{claim:desired-scaling-matrix}]
First, note that we demand an inequality involving $\alpha_v$ and $\alpha_w$ only if $\phi(v)=\phi(w)$. In other words, we can find the real numbers $\alpha_v>0$ for $v\in V_c$ separately for each of the labels $\phi(v)\in \lbrace 1,\dots,N\rbrace$.

Fix some $i\in \lbrace 1,\dots,N\rbrace$. Consider all arcs in $X\in \mathcal{X}_c$ satisfying $\vert V^X\vert\geq i$ (i.e.\ all arcs containing a vertex labelled $i$), and order these arcs as $X(1), X(2),\dots, X(m)$ in order of decreasing length. For each $j=1,\dots,m$, let $v(j)$ be the unique vertex in $V^{X(j)}$ with $\phi(v(j))=i$. Note that then $\{v(1),\dots,v(m)\}$ is the set of all vertices in $V_c$ with label $\phi(v)=i$. Our goal is to find positive real numbers $\alpha_{v(1)},\dots,\alpha_{v(m)}$, such that we have $\alpha_{v(j)}M_{v(j),f}\le \alpha_{v(k)} M_{v(k),f}$ for any $j,k\in \{1,\dots,m\}$ and any $f\in F^{X(j)}$. We will find these numbers by applying \cref{lem:matrix-rescaling} to the matrix $M\left[\lbrace v(1),\dots,v(m)\rbrace,F^{X(1)}\cup \dots \cup F^{X(m)}\right]$.

In order to apply \cref{lem:matrix-rescaling}, the first thing we need to check is that $M_{v(j),f}>0$ for each $j\in \lbrace 1,\dots,m\rbrace$ and each $f\in (F^{X(1)}\cup \dots \cup F^{X(m)})\setminus F^{X(j)}$. Indeed, since the arcs $X(1),\dots,X(m)\in \mathcal{X}_c$ are all disjoint (as they are well-separated), $v(j)\in X(j)$ cannot be a vertex of the facet $f$, and so we have $M_{v(j),f}>0$.

The other thing we need to check is that for each $j\in \lbrace 1,\dots,m\rbrace$, each $k\in \lbrace 1,\dots,j-1\rbrace$, each $f\in F^{X(j)}$ and each $g\in F^{X(1)}\cup \dots\cup F^{X(j-1)}$, we have $M_{v(j),f}M_{v(k),g}\leq M_{v(j),g}M_{v(k),f}$. But this follows from \cref{lem:circle-geometry} (applied with the arc $X(j)$, the vertices $v(j)$ and $v(k)$ and the facets $f$ and $g$). Indeed, if we write $\eps$ for the length of the arc $X(j)$, then the well-separatedness of the arcs in $\mathcal X_c$, and the fact that we ordered these arcs in order of decreasing size, ensure that $v(k)$ and both endpoints of $g$ have arc-distance at least $5\eps$ from every point of $X(j)$.

We conclude that all assumptions of \cref{lem:matrix-rescaling} are satisfied and we obtain positive real numbers $\alpha_{v(1)},\dots, \alpha_{v(m)}$ satisfying the desired properties.
\end{proof}

\begin{proof}[Proof of \cref{claim:matrix-K-cyclic-polygon}]
We partition the $V\times F_c$ matrix $K$ into submatrices $K[V,F^X]$, for $X\in \mathcal X_c$. It suffices to show that each of these $\vert \mathcal{X}_c\vert$ submatrices has nonnegative entries and nonnegative rank at most $8$. Recall that for each $X\in \mathcal X_c$, the matrix $K[V^X,F^X]$ has only zero entries, so in fact it suffices to consider the submatrices $K[V\setminus V^X,F^X]$. Fix some $X\in \mathcal X_c$; we take a moment to recall all the possibilities for the rows of $K[V\setminus V^X,F^X]$.
\begin{compactitem}
    \item If $v\in V\setminus V_c$ then $K_{v,f}=M_{v,f}$ for all $f\in F^X$.
    \item If $v\in V_c$ and $\phi(v)>\vert V^X\vert$, then $K_{v,f}=\alpha_v M_{v,f}-t^{(\phi(v))}_f=\alpha_v M_{v,f}$ for all $f\in F^X$.
    \item Otherwise, if $v\in V_c$ and $\phi(v)\le \vert V^X\vert$, then $K_{v,f}=\alpha_v M_{v,f}-t^{(\phi(v))}_f=\alpha_v M_{v,f}-\alpha_{x_v} M_{x_v,f}$ for all $f\in F^X$, where $x_v$ is the unique vertex in $V^X$ such that $\phi(x_v)=\phi(v)$.
\end{compactitem}

It follows from this description that the matrix $K[V\setminus V^X,F^X]$ has nonnegative entries. Indeed, each of the entries of $K[V\setminus V^X,F^X]$ is either a positively scaled version of an entry of $M$, or is of the form $\alpha_v M_{v,f}-\alpha_{x_v} M_{x_v,f}$, where $f\in F^X$, and $v\in V_c$ and $x_v\in V^X$ are such that $\phi(v)=\phi(x_v)$. The choice of the numbers $\alpha_v$ in \cref{claim:desired-scaling-matrix} ensures that the latter entries are all nonnegative.

Now, in order to show that $\rank_+ K[V\setminus V^X,F^X]\leq 8$, we apply \cref{lem:shitov-original} to the nonnegative matrix $K[V\setminus V^X,F^X]$, with $V^X=V\cap X$ as our consecutive set of vertices of $P$. The set of facets of $P$ with both endpoints in $V^X$ is precisely the set $F^X$. For each $v\in V\setminus V^X$, in order to choose $\alpha_v$, $\beta_v$ and $x_v$ such that condition ($\star$) is satisfied, we consider the three cases above describing the row of $K[V\setminus V^X,F^X]$ corresponding to $v$. In the first case where $v\in V\setminus V_c$, we can define $\alpha_v=1$, $\beta_v=0$ and take any $x_v\in V^X$. In the second case where $v\in V_c$ and $\phi(v)>\vert V^X\vert$, we have already defined $\alpha_v$ and we can additionally define $\beta_v=0$ and take any $x_v\in V^X$. In the third case where $v\in V_c$ and $\phi(v)\le \vert V^X\vert$, we have already defined $\alpha_v$; as above we let $x_v$ be the unique vertex in $V^X$ such that $\phi(x_v)=\phi(v)$, and let $\beta_v=\alpha_{x_v}$.

So, combining the conclusion of \cref{lem:shitov-original} with the fact that $K[V^X,F^X]$ is the zero matrix, we obtain $\rank_+ K[V,F^X]=\rank_+ K[V\setminus V^X,F^X]\leq 8$, as desired.
\end{proof}

\section{Slacks in cyclic polygons}
\label{section-geometry-circle}

In this section we prove \cref{lem:circle-geometry}. Recall that $X\su \Gamma$ is an arc of length $\eps>0$ and that $Y\su \Gamma$ is the set of all points on the circle $\Gamma$ with arc-distance at least $5\eps$ from every point of $X$. Note that $Y$ is itself an arc of $\Gamma$, and is disjoint from $X$. We are given vertices $v\in X$ and $w\in Y$, a facet $f$ with both endpoints in $X$ and a facet $g$ with both endpoints in $Y$, and our goal is to prove that $M_{v,f}M_{w,g}\leq M_{v,g}M_{w,f}$.

If $M_{w,g}=0$ then the desired inequality is trivially satisfied, so we may assume $M_{w,g}> 0$. Furthermore we have $M_{w,f}> 0$ because $w\in Y$ and both endpoints of $f$ are in $X\su \Gamma\setminus Y$. So our desired inequality is equivalent to $M_{v,f}/M_{w,f}\leq M_{v,g}/M_{w,g}$. Since the entries of the slack matrix $M$ depend on the normalisation of the constraints, it is more convenient to reinterpret this inequality in terms of Euclidean distances. For any point $x\in \Gamma$ and any line $\ell$, let $d(x,\ell)$ denote the Euclidean distance from the point $x$ to the line $\ell$. Let $\ell_f$ and $\ell_g$ be the lines through the facets $f$ and $g$, and note that the desired inequality is equivalent to
\begin{equation}\label{eq-geometry-desired-inequality}
    \frac{d(v,\ell_f)}{d(w,\ell_f)}\leq \frac{d(v,\ell_g)}{d(w,\ell_g)}.
\end{equation}

We now define a point $z_f$ on $X$, in such a way that the ratio of distances $d(v,\ell_f)/d(w,\ell_f)$ can be expressed in terms of Euclidean point-to-point distances $d(v,z_f)$ and $d(w,z_f)$. If the lines $vw$ and $\ell_{f}$ are parallel, then define $z_f$ to be the midpoint of the sub-arc of $X$ between the two endpoints of $f$. Otherwise, if the lines $vw$ and $\ell_f$ intersect in some point $p_f$, then this point $p_f$ must lie on or outside of the circle $\Gamma$ (because $v$, $w$ and both endpoints of the facet $f$ lie on $\Gamma$ and are vertices of the convex polygon $P$). We can therefore consider the lines through $p_f$ tangent to the circle $\Gamma$. Since $\ell_f$ intersects $\Gamma$ in two points on the arc $X$ (namely, the endpoints of $f$), at least one of the tangent lines through $p_f$ touches the circle $\Gamma$ in a point on $X$ (in fact, in a point of the sub-arc of $X$ between the two endpoints of the facet $f$). Define $z_f$ to be such a point.

\begin{claim}\label{claim:distance-ratio}
We have
\[
\frac{d(v,\ell_f)}{d(w,\ell_f)}=\left(\frac{d(v,z_f)}{d(w,z_f)}\right)^2.\]
\end{claim}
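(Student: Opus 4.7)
The plan is to handle the two cases in the definition of $z_f$ separately, reducing the claim to the power-of-a-point relation and a similar-triangles argument.

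First I will dispose of the parallel case, when the lines $vw$ and $\ell_f$ are parallel. In this case $d(v,\ell_f)=d(w,\ell_f)$, so the left-hand side equals $1$. For the right-hand side, I will use the observation that when two chords of $\Gamma$ are parallel, their perpendicular bisectors coincide (both pass through the centre of $\Gamma$ and are orthogonal to the common direction). Since $z_f$ is, by definition, the midpoint of the sub-arc between the endpoints of $f$, it lies on this common perpendicular bisector, which is also the perpendicular bisector of the chord $vw$. Therefore $d(v,z_f)=d(w,z_f)$ and the right-hand side also equals~$1$.

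Now I turn to the main case, where the lines $vw$ and $\ell_f$ meet at the point $p_f$, which (as noted in the text) lies on or outside $\Gamma$. First I would express the ratio $d(v,\ell_f)/d(w,\ell_f)$ in terms of distances along the line $vw$. Since $v$ and $w$ both lie on the line $vw$ through $p_f$, dropping perpendiculars from $v$ and from $w$ to $\ell_f$ yields two right triangles sharing the angle at $p_f$, whence
\[
\frac{d(v,\ell_f)}{d(w,\ell_f)}=\frac{|p_f v|}{|p_f w|}.
\]
Next I would invoke the power of the point $p_f$ with respect to $\Gamma$: the line $vw$ is a secant meeting $\Gamma$ at $v$ and $w$, while the line $p_f z_f$ is tangent to $\Gamma$ at $z_f$, so
\[
|p_f z_f|^2=|p_f v|\cdot|p_f w|.
\]

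The key geometric step is now a similar-triangles argument. The triangles $p_f z_f v$ and $p_f w z_f$ share the angle at $p_f$ (since $v$ and $w$ lie on the same ray from $p_f$), and the previous identity rewrites as $|p_f z_f|/|p_f v|=|p_f w|/|p_f z_f|$. Hence these triangles are similar (SAS), and the ratio of corresponding sides gives
\[
\frac{d(v,z_f)}{d(w,z_f)}=\frac{|v z_f|}{|z_f w|}=\frac{|p_f z_f|}{|p_f w|}=\frac{|p_f v|}{|p_f z_f|}.
\]
Multiplying the last two expressions together and using the power-of-a-point identity once more yields
\[
\left(\frac{d(v,z_f)}{d(w,z_f)}\right)^2=\frac{|p_f v|}{|p_f w|}=\frac{d(v,\ell_f)}{d(w,\ell_f)},
\]
as desired. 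The only real obstacle is the bookkeeping to ensure that $v$ and $w$ lie on the same ray from $p_f$ (so that the shared-angle hypothesis is valid), which follows from the fact that $p_f$ is on or outside $\Gamma$ and $v,w$ are the two intersection points of the secant with $\Gamma$.
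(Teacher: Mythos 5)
Your proof is correct and follows essentially the same route as the paper: both reduce $d(v,\ell_f)/d(w,\ell_f)$ to $|p_f v|/|p_f w|$ via similar right triangles at $p_f$, and both exploit the similarity of triangles $p_f z_f v$ and $p_f w z_f$ to relate $d(v,z_f)/d(w,z_f)$ to segment lengths along the line $vw$. The only stylistic difference is that you invoke power of a point to certify that similarity (and thereby absorb the degenerate $p_f\in\Gamma$ subcase, where the paper instead observes directly that $p_f=z_f=v$ and both sides vanish), while the paper implicitly relies on the tangent–chord angle; these are equivalent.
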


\begin{proof}
In the case where ${vw}$ and $\ell_{f}$ are parallel, the point $z_f$ is the midpoint of an arc between the two endpoints of $f$, and also the midpoint of an arc between $v$ and $w$. We therefore have $d(v,\ell_f)=d(w,\ell_f)$ and $d(v,z_f)=d(w,z_f)$, so both sides of the desired equation are equal to 1.

Next, if the lines $vw$ and $\ell_f$ intersect each other in a point $p_f$ which lies on the circle $\Gamma$, then $v$ must be one of the endpoints of the facet $f$ (note that $w\in Y$ cannot be an endpoint of $f$). In this case, we have $p_f=z_f=v$ and $d(v,\ell_f)=d(v,z_f)=0$, so both sides of the desired equation are equal to 0.

Finally, we consider the case where the lines $vw$ and $\ell_{f}$ intersect in a point $p_f$ outside of $\Gamma$ (see \cref{figure-circle-lemma}). In this case, we can first observe that
\[\frac{d(v,\ell_f)}{d(w,\ell_f)}=\frac{d(v,p_f)}{d(w,p_f)},\]
since the two triangles in \cref{figure-circle-lemma} formed by $p_f$ and the two dashed lines are similar. Furthermore, the triangles $p_f v z_f$ and $pz_f w$ are also similar and we obtain
\[\frac{d(v,z_f)}{d(w,z_f)}=\frac{d(v,p_f)}{d(z_f,p_f)}=\frac{d(z_f,p_f)}{d(w,p_f)}.\]
All in all, this yields
\[\left(\frac{d(v,z_f)}{d(w,z_f)}\right)^2=\frac{d(v,p_f)}{d(z_f,p_f)}\cdot \frac{d(z_f,p_f)}{d(w,p_f)}=\frac{d(v,p_f)}{d(w,p_f)}=\frac{d(v,\ell_f)}{d(w,\ell_f)},\]
and finishes the proof of the claim.
\end{proof}

\begin{figure}
\centering
\begin{tikzpicture}[x=2.5cm,y=2.5cm, scale=0.9]
\clip(-2.2,-1.05) rectangle (2.2,1.3);
\draw (0,0) circle (1);
\draw [domain=-2.2:0.9] plot(\x,{(--0.9939598838088064--0.5413719514907043*\x)/1.219601728074889});
\draw [domain=-2.2:1.3] plot(\x,{(--0.5739809988266833--0.3190208262490215*\x)/1.8804156316453997});
\draw [domain=-2.2:1.3] plot(\x,{(--1.5668864051652167--0.8501259075858055*\x)/1.3162138686182372});
\draw (-0.9905445720675394,0.13719129253540555)-- (-0.5425574596750464,0.8400186920247432);
\draw (-0.5425574596750464,0.8400186920247432)-- (0.8898710595778603,0.456212118784427);
\draw [dashed] (-0.9905445720675394,0.13719129253540555)-- (-1.0788382620724908,0.3360991535940264);
\draw [dashed] (0.8898710595778603,0.456212118784427)-- (0.6103498663078566,1.085916944472104);
\draw [line width=2pt] (-0.9120184354907482,0.4101492086119489)-- (0.30758329258414074,0.9515211601026532);
\draw [fill] (-0.9905445720675394,0.13719129253540555) circle (1.5pt);
\draw (-0.9,0.05) node {$v$};
\draw [fill] (0.8898710595778603,0.456212118784427) circle (1.5pt);
\draw (0.82,0.32) node {$w$};
\draw [fill] (-1.8587713282932836,-0.010107215561062354) circle (1.5pt);
\draw (-1.8,-0.15) node {$p_f$};
\draw [fill] (-0.5425574596750464,0.8400186920247432) circle (1.5pt);
\draw (-0.6,0.95) node {$z_f$};
\draw (-0.4,0.52) node {$f$};
\draw (1.0,1.1) node {$\ell_f$};
\end{tikzpicture}
\caption{An illustration for the proof of \cref{claim:distance-ratio} in the case that the lines $vw$ and $\ell_{f}$ intersect in a point $p_f$ outside of $\Gamma$.} 
\label{figure-circle-lemma}
\end{figure}
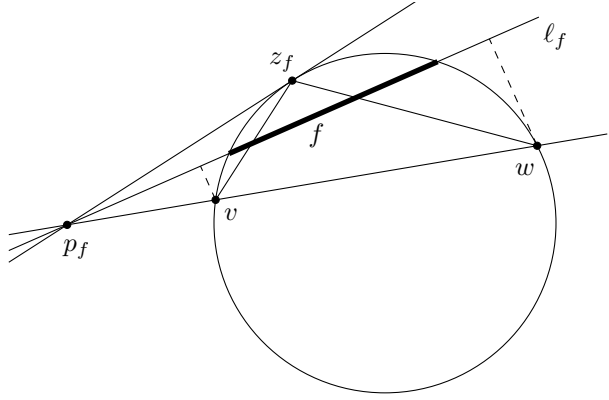

Analogously to our definition of the point $z_f$, we can also define a point $z_g\in Y$ such that
\[\frac{d(v,\ell_g)}{d(w,\ell_g)}=\left(\frac{d(v,z_g)}{d(w,z_g)}\right)^2.\]
The desired inequality \cref{eq-geometry-desired-inequality} is therefore equivalent to
\[\left(\frac{d(v,z_f)}{d(w,z_f)}\right)^2\leq \left(\frac{d(v,z_g)}{d(w,z_g)}\right)^2,\]
which is in turn equivalent to
\begin{equation}\label{eq-geometry-desired-inequality-2}
d(v,z_f)d(w,z_g)\leq d(v,z_g)d(w,z_f).
\end{equation}

To prove this, we first observe that for any points $x\in X$ and $y\in Y$ the Euclidean distance $d(x,y)$ between $x$ and $y$ is at least $3\eps$. Indeed, let $\alpha\leq \pi$ be the arc-distance between $x$ and $y$ (and note that by the definition of the set $Y$ we have $\alpha\geq 5\eps$). Then the angle between the two unit vectors corresponding to $x$ and $y$ (from the origin, which is the centre of $\Gamma$) is precisely $\alpha$ and hence \[d(x,y)=2\cdot \sin(\alpha/2)\geq 2\cdot \frac{\alpha/2}{\pi/2}\geq \frac{10}{\pi}\cdot \eps\geq 3\eps.\]
Here we used that every real number $0\leq t\leq \pi/2$ satisfies $\sin t\geq (\pi/2)^{-1}\cdot t$. Also note that $d(v,z_f)\leq \eps$, since $v$ and $z_f$ lie on the arc $X$, which has length $\varepsilon$. It follows that $d(v,z_g)\geq 3\eps\geq 3\cdot d(v,z_f)$ and $d(w,z_f)\geq 3\eps\geq 3\cdot d(v,z_f)$.

Furthermore, the triangle inequality gives $d(w,z_g)\leq d(w,z_f)+d(v,z_f)+d(v,z_g)$. Hence
\begin{align*}
d(v,z_f)\cdot d(w,z_g)&\leq d(v,z_f)\cdot d(w,z_f)+d(v,z_f)\cdot d(v,z_f)+d(v,z_f)\cdot d(v,z_g)\\
&\leq \frac{1}{3}d(v,z_g)\cdot d(w,z_f)+\frac{1}{3}d(v,z_g)\cdot \frac{1}{3}d(w,z_f)+\frac{1}{3}d(w,z_f)\cdot d(v,z_g)\leq d(v,z_g)\cdot d(w,z_f)
\end{align*}
This proves \cref{eq-geometry-desired-inequality-2} and finishes the proof of \cref{lem:circle-geometry}.

\section{Concluding remarks}

In this paper we proved several results about the extension complexity of low-dimensional polytopes. There are a number of compelling questions left unanswered.

First, we believe it would be interesting to better understand the situation when the dimension is allowed to grow slowly with the number of vertices. We have proved that (for infinitely many $n$) there is an $n^{o(1)}$-dimensional polytope with $n$ vertices and extension complexity $n^{1-o(1)}$, but what can be said about the $o(1)$ terms?
What is the minimum possible dimension $d$ such that there exists a $d$-dimensional polytope with $n$ vertices and extension complexity \emph{exactly} $n$? For example, the cross polytope of dimension $d=n/2$ has $n$ vertices and extension complexity exactly $n$, but is this also possible for a polytope with dimension $d=n^{o(1)}$?
What can be said about the extension complexity of \emph{random} $d$-dimensional $n$-vertex polytopes when $d$ is allowed to grow slowly with $n$?

Second, in \cref{thm:sphere,thm:ball} we have found the typical order of magnitude of $\xc(P)$ for a random $d$-dimensional polytope $P$, for two natural models of random polytopes of a fixed dimension $d$. It may be of interest to analyse and improve the dependence on $d$ in these results. Also, there are various other models of random polytopes one could consider: for example, we could consider $P$ to be the convex hull of random points inside a convex body other than the unit ball, such as a cube or a simplex. We suspect that our methods might still be applicable in such settings, but the requisite geometric considerations may become quite complicated.

Finally, in the case where $d$ is constant, we are still a long way from understanding the maximum possible extension complexity of a $d$-dimensional $n$-vertex polytope. As suggested by \cref{thm:sphere,thm:ball,thm:cyclic-polygon}, could it be that all such polytopes have extension complexity $O(\sqrt n)$? Is there at least an upper bound of the form $o(n)$, for any fixed $d\ge 3$? Shitov has recently made conjectures for both of these questions (see \cite[Conjecture~6.3]{Shi19} and \cite[Conjecture~61]{Sh14v2}). It is tempting to imagine that the arguments in the proofs of \cref{thm:sphere,thm:ball,thm:cyclic-polygon} could be useful in order to prove new upper bounds, though it seems that significant new ideas would be required.

\textbf{Acknowledgements.} We would like to thank Yaroslav Shitov for helpful comments on an earlier version of this paper. We are also grateful to the referees for their careful reading of the paper, and their many useful comments and suggestions. This work started when all three authors were at Stanford University, the first two as Szeg\"o Assistant Professors, and the third as a Visiting Assistant Professor.


\begin{thebibliography}{10}

\bibitem{AGKM16}
Sanjeev Arora, Rong Ge, Ravi Kannan, and Ankur Moitra, \emph{Computing a
  nonnegative matrix factorization---provably}, SIAM J. Comput. \textbf{45}
  (2016), 1582--1611.

\bibitem{Bar07}
Imre B\'{a}r\'{a}ny, \emph{Random polytopes, convex bodies, and approximation},
  Stochastic geometry, Lecture Notes in Math., vol. 1892, Springer, Berlin,
  2007, pp.~77--118.

\bibitem{Bar08}
Imre B\'{a}r\'{a}ny, \emph{Random points and lattice points in convex bodies},
  Bull. Amer. Math. Soc. (N.S.) \textbf{45} (2008), 339--365.

\bibitem{BD97}
Imre B\'{a}r\'{a}ny and Leoni Dalla, \emph{Few points to generate a random
  polytope}, Mathematika \textbf{44} (1997), 325--331.

\bibitem{Dag13}
LeRoy~B. Beasley, Hartmut Klauck, Troy Lee, and Dirk~Oliver Theis,
  \emph{{Communication Complexity, Linear Optimization, and lower bounds for
  the nonnegative rank of matrices (Dagstuhl Seminar 13082)}}, Dagstuhl Reports
  \textbf{3} (2013), 127--143.

\bibitem{BL09}
LeRoy~B. Beasley and Thomas~J. Laffey, \emph{Real rank versus nonnegative
  rank}, Linear Algebra Appl. \textbf{431} (2009), 2330--2335.

\bibitem{BN01}
Aharon Ben-Tal and Arkadi Nemirovski, \emph{On polyhedral approximations of the
  second-order cone}, Math. Oper. Res. \textbf{26} (2001), 193--205.

\bibitem{BCR11}
Cristiano Bocci, Enrico Carlini, and Fabio Rapallo, \emph{Perturbation of
  matrices and nonnegative rank with a view toward statistical models}, SIAM J.
  Matrix Anal. Appl. \textbf{32} (2011), 1500--1512.

\bibitem{BLM03}
St\'{e}phane Boucheron, G\'{a}bor Lugosi, and Pascal Massart,
  \emph{Concentration inequalities using the entropy method}, Ann. Probab.
  \textbf{31} (2003), 1583--1614.

\bibitem{BP13}
G\'{a}bor Braun and Sebastian Pokutta, \emph{Common information and unique
  disjointness}, 2013 {IEEE} 54th {A}nnual {S}ymposium on {F}oundations of
  {C}omputer {S}cience---{FOCS} 2013, IEEE Computer Soc., Los Alamitos, CA,
  2013, pp.~688--697.

\bibitem{BM85}
C.~Buchta, J.~M\"{u}ller, and R.~F. Tichy, \emph{Stochastical approximation of
  convex bodies}, Math. Ann. \textbf{271} (1985), 225--235.

\bibitem{CLRS16}
Siu~On Chan, James~R. Lee, Prasad Raghavendra, and David Steurer,
  \emph{Approximate constraint satisfaction requires large {LP} relaxations},
  J. ACM \textbf{63} (2016), Art. 34.

\bibitem{CR93}
Joel~E. Cohen and Uriel~G. Rothblum, \emph{Nonnegative ranks, decompositions,
  and factorizations of nonnegative matrices}, Linear Algebra Appl.
  \textbf{190} (1993), 149--168.

\bibitem{CCZ13}
Michele Conforti, G\'{e}rard Cornu\'{e}jols, and Giacomo Zambelli,
  \emph{Extended formulations in combinatorial optimization}, Ann. Oper. Res.
  \textbf{204} (2013), 97--143.

\bibitem{FKPT13}
Samuel Fiorini, Volker Kaibel, Kanstantsin Pashkovich, and Dirk~Oliver Theis,
  \emph{Combinatorial bounds on nonnegative rank and extended formulations},
  Discrete Math. \textbf{313} (2013), 67--83.

\bibitem{FMPTdW15}
Samuel Fiorini, Serge Massar, Sebastian Pokutta, Hans~Raj Tiwary, and Ronald
  de~Wolf, \emph{Exponential lower bounds for polytopes in combinatorial
  optimization}, J. ACM \textbf{62} (2015), Art. 17.

\bibitem{FRT12}
Samuel Fiorini, Thomas Rothvo\ss, and Hans~Raj Tiwary, \emph{Extended
  formulations for polygons}, Discrete Comput. Geom. \textbf{48} (2012),
  658--668.

\bibitem{FW81}
P.~Frankl and R.~M. Wilson, \emph{Intersection theorems with geometric
  consequences}, Combinatorica \textbf{1} (1981), 357--368.

\bibitem{Gil15}
Nicolas Gillis, \emph{The why and how of nonnegative matrix factorization},
  Regularization, optimization, kernels, and support vector machines, Chapman
  \& Hall/CRC Mach. Learn. Pattern Recogn. Ser., CRC Press, Boca Raton, FL,
  2015, pp.~257--291.

\bibitem{GGKPRT13}
Jo{\~{a}}o Gouveia, Roland Grappe, Volker Kaibel, Kanstantsin Pashkovich,
  Richard~Z. Robinson, and Rekha~R. Thomas, \emph{Which nonnegative matrices
  are slack matrices?}, Linear Algebra Appl. \textbf{439} (2013), 2921--2933.

\bibitem{Gru97}
Peter~M. Gruber, \emph{Comparisons of best and random approximation of convex
  bodies by polytopes}, no.~50, 1997, II International Conference in
  ``Stochastic Geometry, Convex Bodies and Empirical Measures'' (Agrigento,
  1996), pp.~189--216.

\bibitem{Hru12}
Pavel Hrube\v{s}, \emph{On the nonnegative rank of distance matrices}, Inform.
  Process. Lett. \textbf{112} (2012), 457--461.

\bibitem{Hug13}
Daniel Hug, \emph{Random polytopes}, Stochastic geometry, spatial statistics
  and random fields, Lecture Notes in Math., vol. 2068, Springer, Heidelberg,
  2013, pp.~205--238.

\bibitem{Kai11}
Volker Kaibel, \emph{Extended formulations in combinatorial optimization},
  Optima 85, 2011.

\bibitem{KP11}
Volker Kaibel and Kanstantsin Pashkovich, \emph{Constructing extended
  formulations from reflection relations}, Integer programming and
  combinatorial optimization, Lecture Notes in Comput. Sci., vol. 6655,
  Springer, Heidelberg, 2011, pp.~287--300.

\bibitem{KW15}
Volker Kaibel and Stefan Weltge, \emph{A short proof that the extension
  complexity of the correlation polytope grows exponentially}, Discrete Comput.
  Geom. \textbf{53} (2015), 397--401.

\bibitem{Dag15}
Hartmut Klauck, Troy Lee, Dirk~Oliver Theis, and Rekha~R. Thomas,
  \emph{{Limitations of Convex Programming: Lower Bounds on Extended
  Formulations and Factorization Ranks (Dagstuhl Seminar 15082)}}, Dagstuhl
  Reports \textbf{5} (2015), 109--127.

\bibitem{Lee13}
Troy Lee, \emph{Some open problems around nonnegative rank},
  \url{http://research.cs.rutgers.edu/~troyjlee/open_problems.pdf}, 2013.

\bibitem{Li11}
S.~Li, \emph{Concise formulas for the area and volume of a hyperspherical cap},
  Asian J. Math. Stat. \textbf{4} (2011), 66--70.

\bibitem{LC09}
Lek-Heng Lim and Pierre Comon, \emph{Nonnegative approximations of nonnegative
  tensors}, Journal of Chemometrics \textbf{23} (2009), 432--441.

\bibitem{LC10}
Matthew~M. Lin and Moody~T. Chu, \emph{On the nonnegative rank of {E}uclidean
  distance matrices}, Linear Algebra Appl. \textbf{433} (2010), 681--689.

\bibitem{Moi16}
Ankur Moitra, \emph{An almost optimal algorithm for computing nonnegative
  rank}, SIAM J. Comput. \textbf{45} (2016), 156--173.

\bibitem{Pad16}
Arnau Padrol, \emph{Extension complexity of polytopes with few vertices or
  facets}, SIAM J. Discrete Math. \textbf{30} (2016), 2162--2176.

\bibitem{PP15}
Arnau Padrol and Julian Pfeifle, \emph{Polygons as sections of
  higher-dimensional polytopes}, Electron. J. Combin. \textbf{22} (2015), Paper
  1.24, 16.

\bibitem{Pas12}
Kanstantsin Pashkovich, \emph{Extended formulations for combinatorial
  polytopes}, Ph.D. thesis, Otto-von-Guericke-Universit\"at Magdeburg, 2012.

\bibitem{Ray70}
H.~Raynaud, \emph{Sur l'enveloppe convexe des nuages de points al\'{e}atoires
  dans {$R\sp{n}$}. {I}}, J. Appl. Probability \textbf{7} (1970), 35--48.

\bibitem{Rei05}
Matthias Reitzner, \emph{The combinatorial structure of random polytopes}, Adv.
  Math. \textbf{191} (2005), 178--208.

\bibitem{RS63}
A.~R\'{e}nyi and R.~Sulanke, \emph{\"{U}ber die konvexe {H}\"{u}lle von {$n$}
  zuf\"{a}llig gew\"{a}hlten {P}unkten}, Z. Wahrscheinlichkeitstheorie und
  Verw. Gebiete \textbf{2} (1963), 75--84.

\bibitem{Rot17}
Thomas Rothvoss, \emph{The matching polytope has exponential extension
  complexity}, J. ACM \textbf{64} (2017), Art. 41.

\bibitem{Sch97}
Rolf Schneider, \emph{Discrete aspects of stochastic geometry}, Handbook of
  discrete and computational geometry, CRC Press Ser. Discrete Math. Appl.,
  CRC, Boca Raton, FL, 1997, pp.~167--184.

\bibitem{Sch08}
Rolf Schneider, \emph{Recent results on random polytopes}, Boll. Unione Mat.
  Ital. (9) \textbf{1} (2008), 17--39.

\bibitem{Sga99}
Ji\v{r}\'{\i} Sgall, \emph{Bounds on pairs of families with restricted
  intersections}, Combinatorica \textbf{19} (1999), 555--566.

\bibitem{Shi19}
Ya.~N. Shitov, \emph{Tropical lower bounds for extended formulations. {II}.
  {D}eficiency graphs}, Izv. Ross. Akad. Nauk Ser. Mat. \textbf{83} (2019),
  203--216.

\bibitem{Shi14}
Yaroslav Shitov, \emph{An upper bound for nonnegative rank}, J. Combin. Theory
  Ser. A \textbf{122} (2014), 126--132.

\bibitem{Sh14v1}
Yaroslav Shitov, \emph{Sublinear extensions of polygons}, arXiv
  preprint arXiv:1412.0728v1 (2014).

\bibitem{Shi18}
Yaroslav Shitov, \emph{A universality theorem for nonnegative matrix
  factorizations}, arXiv preprint arXiv:1606.09068 (2018).

\bibitem{Shi19b}
Yaroslav Shitov, \emph{Euclidean distance matrices and separations in
  communication complexity theory}, Discrete Comput. Geom. \textbf{61} (2019),
  653--660.

\bibitem{Sh14v2}
Yaroslav Shitov, \emph{Sublinear extensions of polygons}, arXiv
  preprint arXiv:1412.0728v2 (2020).

\bibitem{Lin13}
Vera S\'os, \emph{Problem session, {C}ombinatorics and probability},
  Oberwolfach Rep. \textbf{10} (2013), 1087--1152, Abstracts from the workshop
  held April 14--20, 2013, Organized by B\'{e}la Bollab\'{a}s, Michael
  Krivelevich and Emo Welzl.

\bibitem{OPG}
Dirk~Oliver Theis, \emph{Extension complexity of (convex) polygons}, Open
  Problem Garden,
  \url{http://www.openproblemgarden.org/op/extension_complexity_of_convex_polygons},
  2011.

\bibitem{TheBlog}
Dirk~Oliver Theis, \emph{Open questions about nonnegative rank and related
  concepts}, 2013, archived at
  \url{https://web.archive.org/web/20170111025052/http://dirkolivertheis.blogspot.de/2013/08/open-questions-about-nonnegative-rank.html}.

\bibitem{VGGT16}
Arnaud Vandaele, Nicolas Gillis, Fran\c{c}ois Glineur, and Daniel Tuyttens,
  \emph{Heuristics for exact nonnegative matrix factorization}, J. Global
  Optim. \textbf{65} (2016), 369--400.

\bibitem{VW09}
Fran{\c{c}}ois Vanderbeck and Laurence~A. Wolsey, \emph{Reformulation and
  decomposition of integer programs}, 50 Years of Integer Programming
  1958-2008, Springer Berlin Heidelberg, November 2009, pp.~431--502.

\bibitem{Vav09}
Stephen~A. Vavasis, \emph{On the complexity of nonnegative matrix
  factorization}, SIAM J. Optim. \textbf{20} (2009), 1364--1377.

\bibitem{Vu05}
V.~H. Vu, \emph{Sharp concentration of random polytopes}, Geom. Funct. Anal.
  \textbf{15} (2005), 1284--1318.

\bibitem{WW93}
Wolfgang Weil and John~A. Wieacker, \emph{Stochastic geometry}, Handbook of
  convex geometry, {V}ol. {A}, {B}, North-Holland, Amsterdam, 1993,
  pp.~1391--1438.

\bibitem{Yan91}
Mihalis Yannakakis, \emph{Expressing combinatorial optimization problems by
  linear programs}, J. Comput. System Sci. \textbf{43} (1991), 441--466.

\end{thebibliography}

\end{document}